\newtheorem{thm}{Theorem}[section]
\newtheorem{cor}[thm]{Corollary}
\newtheorem{lem}[thm]{Lemma}
\newtheorem{prop}[thm]{Proposition}
\theoremstyle{remark}
\newtheorem{rem}[thm]{Remark}
\theoremstyle{definition}
\newtheorem{defs}[thm]{Definition}
\newtheorem{conj}[thm]{Conjecture}
\theoremstyle{plain} % just in case the style had changed
\newcommand{\thistheoremname}{}
\newtheorem{genericthm}[thm]{\thistheoremname}
  \newtheorem*{genericthm*}{\thistheoremname}
\newenvironment{namedthm*}[1]
  {\renewcommand{\thistheoremname}{#1}%
   \begin{genericthm*}}
  {\end{genericthm*}}
\begin{document}

\begin{titlepage} % Suppresses displaying the page number on the title page and the subsequent page counts as page 1
	\newcommand{\HRule}{\rule{\linewidth}{0.5mm}} % Defines a new command for horizontal lines, change thickness here
	
	\center % Centre everything on the page
	
	%------------------------------------------------
	%	Headings
	%------------------------------------------------
	
	\textsc{\LARGE University of Zurich}\\[1.5cm] % Main heading such as the name of your university/college
	
	\textsc{\Large Institute of Mathematics}\\[0.5cm] % Major heading such as course name
	
	\textsc{\large Master's Thesis}\\[0.5cm] % Minor heading such as course title

	%------------------------------------------------
	%	Title
	%------------------------------------------------
	
	\HRule\\[0.4cm]
	
	{\huge\bfseries On the Structure of Foliations on \\ \vspace{0.8cm} Dilation Surfaces}\\[0.6cm] % Title of your document

	\HRule\\[1.5cm]
	
	%------------------------------------------------
	%	Author(s)
	%------------------------------------------------

		\begin{center}
			\large
			\textit{Author}\\
			Anna Sophie \textsc{Schmidhuber} % Your name
		\end{center}

		\begin{center}
			\large
			\textit{Supervisor}\\
			Prof. Dr. Corinna \textsc{Ulcigrai} % Supervisor's name
		\end{center}
	
	% If you don't want a supervisor, uncomment the two lines below and comment the code above
	%{\large\textit{Author}}\\
	%John \textsc{Smith} % Your name
	
	%------------------------------------------------
	%	Date
	%------------------------------------------------
	
	\vfill\vfill\vfill % Position the date 3/4 down the remaining page
	
	{\large\today} % Date, change the \today to a set date if you want to be precise
	
	%------------------------------------------------
	%	Logo
	%------------------------------------------------
	
	\vfill\vfill
	\includegraphics[width=0.3\textwidth]{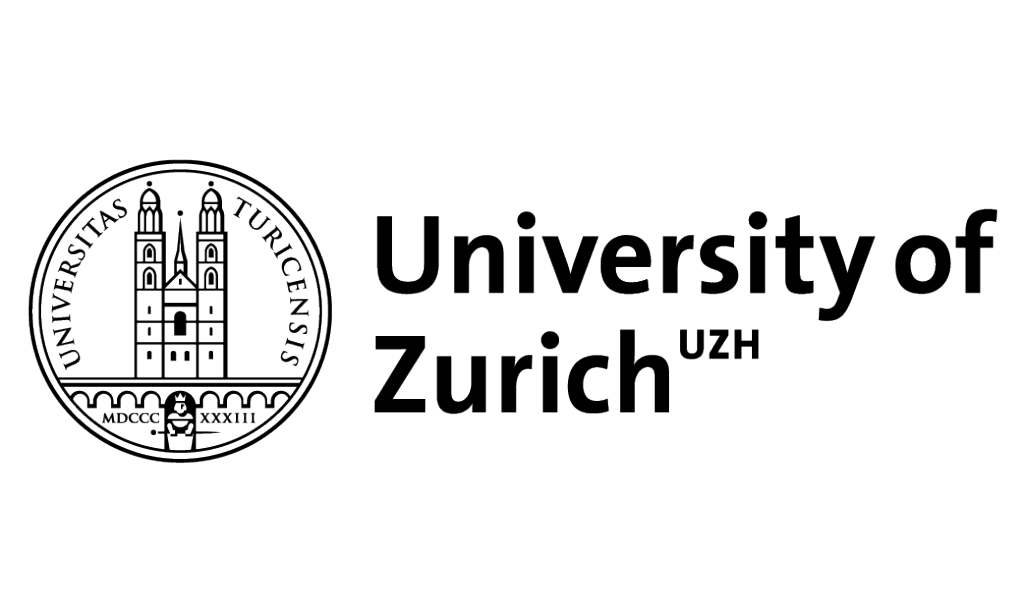}\\[1cm] % Include a department/university logo - this will require the graphicx package
	 
	%----------------------------------------------------------------------------------------
	
	\vfill % Push the date up 1/4 of the remaining page	
\end{titlepage}
 \thispagestyle{empty}
\null\newpage

% Abstract File
% Do not remove centering environment below
\begin{center}

\end{center}
\begin{center}

{\textsc{Abstract}}\\\vspace{0.5cm}
\end{center}

\thispagestyle{empty}
Dilation surfaces are geometric surfaces modelled after the complex plane whose structure group is generated by the groups of translations and dilations. For any dilation surface, for any direction $\theta$ in $S^1$, there exists a foliation on the surface called the directional foliation in direction $\theta$. In this thesis, we prove a structure theorem for the directional foliations on dilation surfaces using a decomposition theorem established by C.J. Gardiner in the 1980s in \cite{gardiner}. We show that given a directional foliation on any dilation surface, there exists a decomposition of the surface into finitely many subsurfaces on which the foliation structure is in one of four possible cases: completely periodic, Morse-Smale, minimal or Cantor-like. We further prove that in the last two cases, the first return map on a segment transversal to the foliation is semi-conjugated to a minimal interval exchange transformation. As a corollary, we obtain an analogous result for affine interval exchange transformations. Throughout the thesis, we accompany our results with an explicit example of a dilation surface called the Disco surface, building on the extensive study of this surface presented in \cite{cascades}. We analyze the directional foliations on the Disco surface that exhibit non-trivially recurrent behaviour and explain geometrically why these foliations accumulate to a Cantor set.

 \thispagestyle{empty}
 \null\newpage

% Acknowledgments File

\begin{center}

\end{center}
\begin{center}

{\textsc{Acknowledgements}}\\\vspace{0.5cm}
\end{center}
\thispagestyle{empty}
I would like to express my gratitude to Prof. Dr. Corinna Ulcigrai for introducing me to dilation surfaces, a fascinating topic which proved to be truly well-chosen for this thesis. I would further like to sincerely thank her as well as Guido Marinoni for all the long and helpful discussions throughout the past year. Their support and deep insights have been essential to my progress. Special thanks goes to Selim Ghazouani, whose publications on dilation surfaces have been my main reference during preparation. He further created the first version of the statement of the main theorem of this thesis and helpfully replied to all of my questions. Moreover, I would like to thank Charles Fougeron who kindly took the time to discuss his work on the Disco surface with me. 

Lastly, I wish to thank my family, my friends, and especially my partner. Without their unconditional and unwavering support this thesis would not have been possible. 
\thispagestyle{empty}
\null\newpage

{\fontsize{11.5}{12.5}\selectfont\thispagestyle{empty} \tableofcontents  \null\newpage}
\null\newpage

% Chapter 1 File

\section{Introduction}
\subsection{Translation surfaces and dilation surfaces} In recent years, \textit{dilation surfaces} have become a highly active topic in mathematical research. They are a natural generalization of much better known objects called \textit{translation surfaces}. A translation surface is a geometric surface that is locally homeomorphic to the complex plane and whose structure group is the group of translations. An equivalent, more hands-on definition is given using the \textit{polygonal representation} of a translation surface. Take a union of polygons in the complex plane with oriented edges that come in pairs: any two edges of a pair are required to have opposite orientation, to be parallel to each other and to have the same length. Edges of the same pair can then be "glued" together using a translation $z \rightarrow z + v$ for $v \in \mathbb{C}$. The surface we obtain is a translation surface, and any translation surface has such a polygonal representation. An example is given in Figure 1, edges that share the same color are identified with each other.

\begin{figure}[h]
\centering
\includegraphics[width=0.6\textwidth]{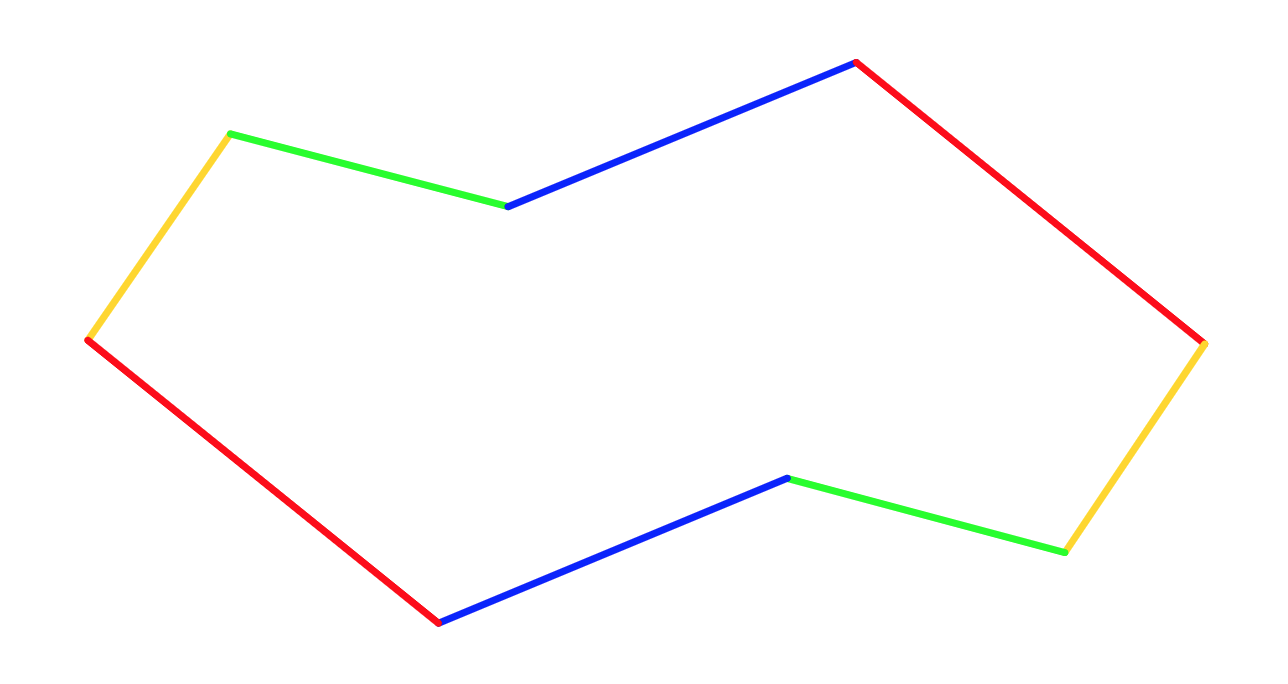}
\caption{A translation surface of genus two.}
\end{figure}

Translation surfaces have been widely studied over the past four decades. One of their key features is the fact that it is possible to define a \textit{directional foliation} on them. By a foliation on the surface or plane we mean, loosely speaking, a partition of the space into disjoint curves called \textit{leaves} that are locally parallel to each other. Pick $\theta \in S^1$ and consider the union of all straight lines in the complex plane in direction $\theta$. This defines a foliation on the complex plane that is invariant by translations and hence extends to a foliation on any translation surface via its polygonal representation. We can easily visualize the leaves of this foliation: take a point $p$ in the polygonal representation and start drawing a line in direction $\theta$. Whenever this line hits a point of an edge, continue drawing the line at the point on the other edge to which the first point is glued to, following the same direction. If we continue doing this both for direction $\theta$ and $-\theta$, we obtain the leaf of the foliation in direction $\theta$ going through $p$.

\begin{figure}[h]
\centering
\includegraphics[width=0.6\textwidth]{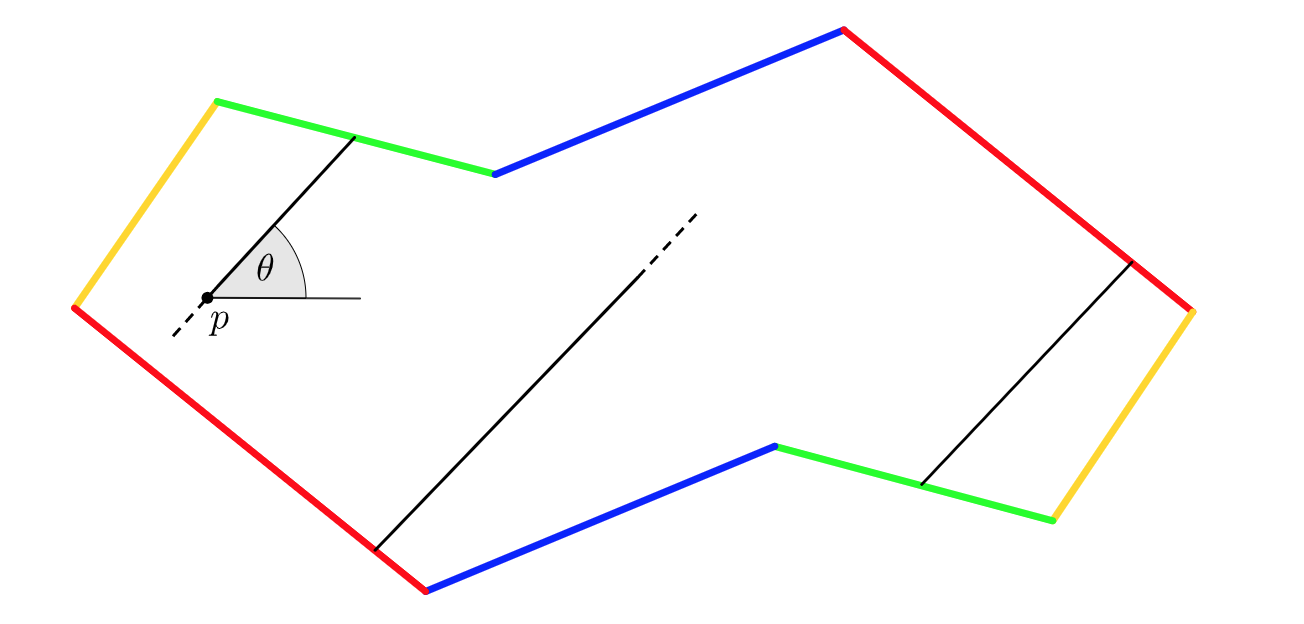}
\caption{A leaf through $p$ of the foliation in direction $\theta$.}
\end{figure}

This leaf can behave in different ways. For example, it could close up and repeat itself again and again to form a \textit{closed leaf}; we call a foliation for which every infinite leaf is closed \textit{completely periodic}. A leaf can also never close up but instead be dense on some subsurface. We call a foliation for which every infinite leaf is dense \textit{minimal}. These are the only two types of leaves we can have on translation surfaces. A natural question to ask is what the structure of the directional foliation on a translation surface looks like for a "typical direction". In fact, this question has already been fully answered: any directional foliation on a translation surface is also a directional flow, using the parametrization given by the Lebesgue distance in the complex plane. A famous theorem of Kerchoff, Masur and Smillie then asserts that for a full measure set of directions $\theta \in S^1$, the directional flow on any (connected) translation surface is uniquely ergodic.  

An important fact to note is that for the directional foliation on translation surfaces to be well-defined, we only require the edges of the polygon that represents the surface to be parallel, but not necessarily to be of the same length. Imagine extending our "glueing maps" to maps of the form $z \rightarrow \lambda z + v$, where $\lambda \in \mathbb{R}_+^*$ and $v \in \mathbb{C}$, meaning that we use a dilation as well as a translation. In this way we can also glue polygons in the plane whose edges come in pairs that are parallel to each other but might have different lengths. A surface obtained in this way is a dilation surface, an example is given in Figure 3.

\begin{figure}[h]
\centering
\includegraphics[width=0.43\textwidth]{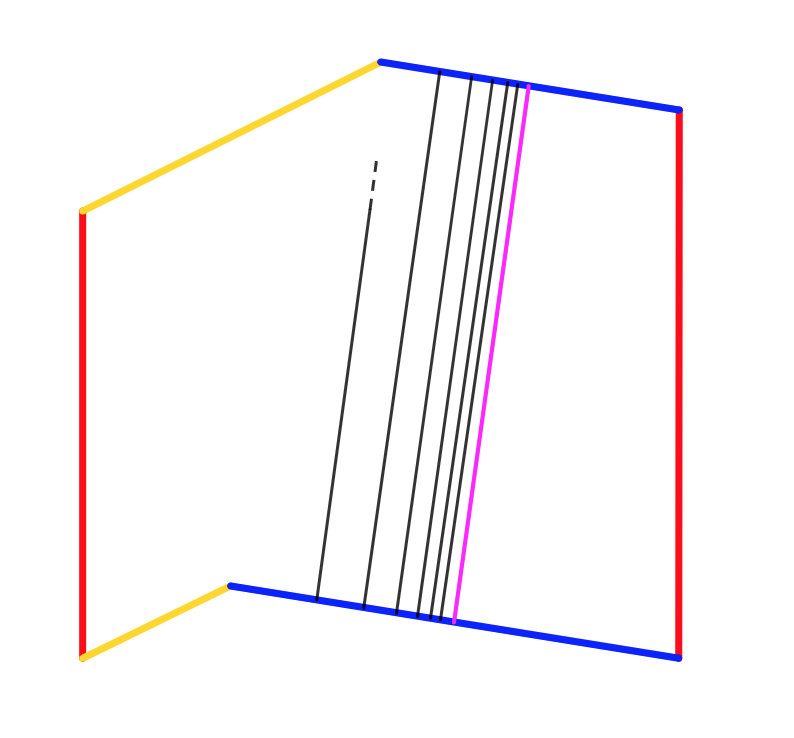}
\caption{A dilation torus and two leaves of a directional foliation.}
\end{figure}

More generally, a dilation surface is geometric surface that is locally homeomorphic to the complex plane and whose structure group is generated by the group of translations and dilations. We will see that unlike in the case of translation surfaces, a dilation surface does not always have a polygonal representation (such that the vertices of the polygon project to actual singularities on the surface). 

The spectrum of possible behaviours of the directional foliation on dilation surfaces is much broader than the one of translation surfaces. Of course, like in the case of translation surfaces, the directional foliation on a subsurface can be completely periodic (1) or minimal (3). But it can also exhibit so-called \textit{hyperbolic behaviour}: In Figure 3, two leaves of a directional foliation are drawn such that there exists a closed leaf (there in pink) that connects the midpoints of the two blue edges. As the two blue edges are glued with a dilation, the black leaf "spirals" towards the pink closed leaf and eventually accumulates on it. In fact, this is true for any leaf entering the region between the blue edges, bounded to the right by the red edge and to the left by the line that connects the left endpoints of the blue edges. Such a region is called an \textit{affine cylinder}, if the two blue edges had the same length it would be called a \textit{flat cylinder}. A flat cylinder, and hence the directional foliation on a translation surface, never exhibits this type of hyperbolic behaviour.

A foliation on a dilation surface which can be decomposed entirely into affine cylinders, meaning that any leaf is attracted or repelled by a finite number of closed leaves, is called $\textit{Morse-Smale}$ (2). Unlike in the case of translation surfaces, the question about the "typical" behaviour of the directional foliation on a dilation surface has not yet been answered. However, Selim Ghazouani conjectures in \cite{ghazouani} that for any dilation surface $S$ which is not a translation surface, for a full measure set of directions in $S^1$, the directional foliation on $S$ is Morse-Smale (see Conjecture \ref{selimconjecture} in this thesis). This conjecture is supported by all the examples that we will see. Nevertheless, for some examples of dilation surfaces, on a measure zero set of directions, we can prove that there exist a wider range of possible dynamical behaviours.
This is the case for the Disco surface, a dilation surface that is introduced in Figure 5 in the next section and will serve as our main example throughout the thesis. Here, we will see that there exist "special" directions that are \textit{Cantor-like} (4), i.e for which the corresponding foliation accumulates on a set whose cross-section is a Cantor-set. Why this behaviour arises on the Disco surface will be explained in a geometric way in Chapter 4. One of the goals of this thesis is then to show that together with the Cantor-like behaviour, we have already completed the list of possible behaviours for directional foliations on dilation surfaces. Our main theorem, stated at the end of the introduction, shows that any dilation surface can be split into a finite number of subsurfaces on which the directional foliation is either in case 1,2,3 or 4. 

\subsection{Interval exchange transformations} Our main theorem also includes a more precise description of case 3 and 4 using the notion of a first return map on a suitably chosen transversal segment. More precisely, consider a foliation in direction $\theta$ on a dilation surface. A \textit{transversal segment} $\Sigma$ is a curve on the surface that never travels along the foliation, meaning it has no open subintervals contained in a leaf of the foliation. On $\Sigma$, we can define a first return map as follows: For any point $p$ in $\Sigma$, travel along the leaf through $p$ in direction $\theta$. If you hit again a point in $\Sigma$, define this point to be $f(p)$. This extends to a map $f$ on $\Sigma$ that is not necessarily defined everywhere. It turns out that for dilation surfaces, $f$ is an \textit{affine interval exchange transformation}. 

In the same way that dilation surfaces are a natural generalization of translation surfaces, affine interval exchange transformations are a natural generalization of \textit{standard interval exchange transformations}: Consider a finite partition of the interval $X=[0,1)$ into connected subintervals. A standard interval exchange transformation, or IET in short, is a bijective map $T:X \rightarrow X$ that rearranges each interval of the partition using translations only. An affine interval exchange map, or AIET for short, is defined in the same way but with the additional freedom of using translations as well as a dilations to rearrange the intervals, meaning that the length of the intervals is no longer preserved. An example is given in Figure 4.

\begin{figure}[h]
\centering
\includegraphics[width=0.8 \textwidth]{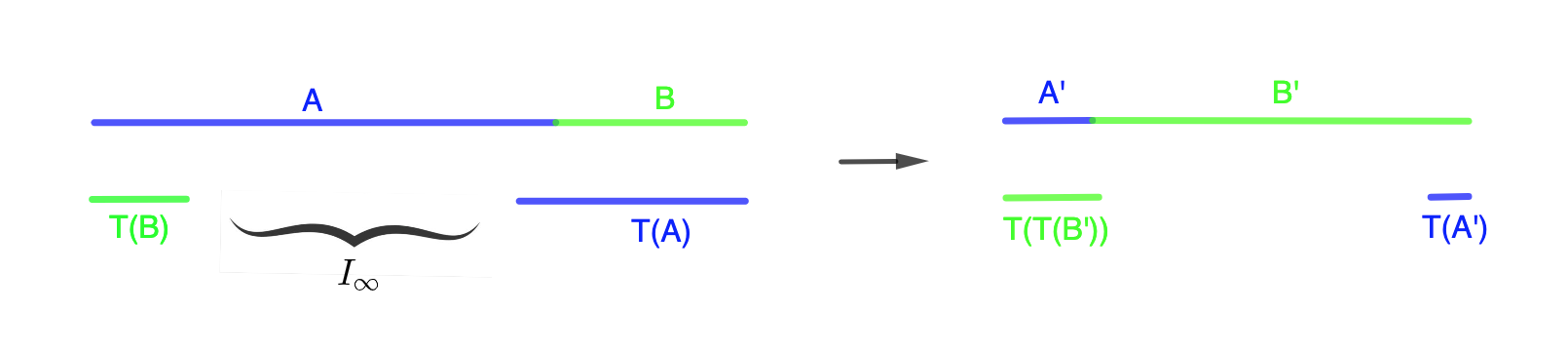}
\caption{An AIET on four intervals.}
\end{figure}
\vspace{3mm}

Both maps are special cases of \textit{generalized interval exchange transformations}, or GIETs for short, maps that are defined as above but now the intervals can be rearranged using orientation preserving diffeomorphisms. These maps in turn are generalizations of orientation-preserving diffeomorphism of the circle and have been extensively studied over the past decades. While IETs have been well-understood, many questions are still open for GIETs, however there is evidence that much of their dynamics can be reduced to the case of AIETs (see \cite{aprioribounds} for further information). There has been a long history in understanding the relationship between affine and standard interval exchange transformations using so called \textit{semi-conjugacies}, where a semi-conjugacy between a map $f$ defined on $X$ and a map $g$ defined on $Y$ is a surjection $h: X \rightarrow Y$ such that $f \circ h = h \circ g$. A famous theorem of Marmi, Moussa and Yoccoz (see \cite{MarmiMoussaYoccoz}) states that "most" IETs admits a semi-conjugated AIET with a \textit{wandering interval}, i.e an interval whose forward images never intersect itself. The proof uses a procedure called \textit{blowing-up}, where a suitable point on the domain of definition of the IET is replaced with a whole interval. If this point has been well-chosen, the new map obtained is in "most" cases an AIET and the orbit of the blown-up point then becomes a wandering interval. Using the opposite procedure, i.e by "blowing down" whole intervals to one point, our main theorem will show that in case 3 and 4, on a suitably chosen transversal segment of the dilation surface, the first return map with respect to the foliation is semi-conjugated to a minimal IET.

We would like to present another link between dilation surfaces and interval exchange transformations: we claim that to any standard or affine interval exchange transformation $T:X \rightarrow X$ we can associate a dilation surface. Take two copies of $X$ and arrange one on the top, one on the bottom. The point $x$ on the copy above will be identified with the point $T(x)$ on the copy below. We join the right and left endpoints with two vertical lines that we identify with each other. The surface we obtain is a dilation surface if $T$ is an AIET and a translation surface if $T$ is an IET. An example of this construction for the map in Figure 4 is given in Figure 5, the dilation surface we obtain in this case is the so-called Disco surface.

\begin{figure}[h]
\centering
\includegraphics[width=0.78\textwidth]{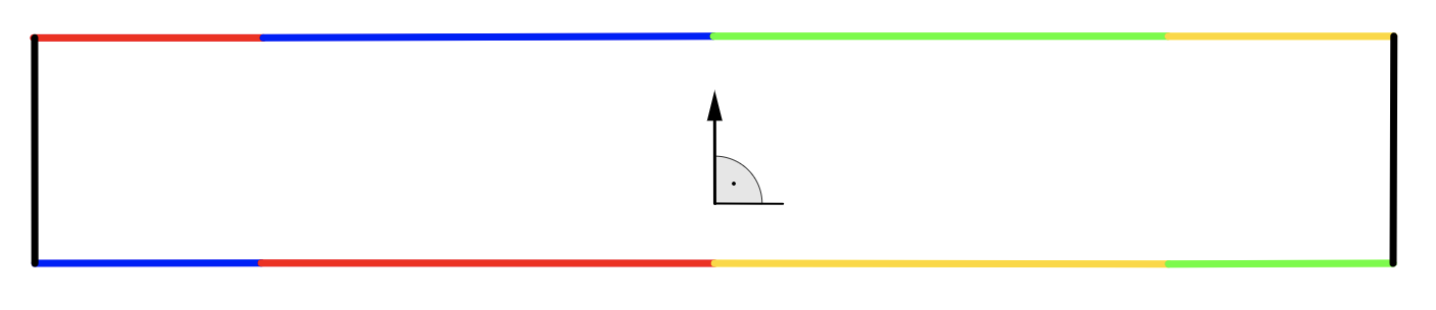}
\caption{The Disco surface.}
\end{figure}

Denote by $\mathcal{F}_{\pi /2}$ the vertical foliation on this surface. Then the first return map on any horizontal cross section $\Sigma$ of the surface is exactly $T$. Thus, understanding the dynamics of standard and affine interval exchange transformations amounts to understanding the directional foliations of dilation surfaces and translation surfaces. 

\subsection{Main results} In this section, we state the main results of this thesis. We have already mentioned some of the results throughout the introduction, however, we still lack the notion of a \textit{non-trivially recurrent leaf closure} to state the theorem in full. 
A \textit{recurrent leaf} is a leaf that keeps coming back to any neighbourhood of any point that lies on it. Leaves that close up again are recurrent, but those we call \textit{trivially recurrent}. Examples of leaves that are non-trivially recurrent are leaves that are dense in a subsurface or accumulate on a Cantor-like set as in the case of the Disco surface. The closure of a non-trivially recurrent leaf is what we call a \textit{non-trivially recurrent leaf closure}. Our theorem then asserts that

\begin{thm}\label{maintheorem} Given a directional foliation $\mathcal{F}_{\theta}$ on any dilation surface $S$, there exists a decomposition of $S$ into subsurfaces that either have no recurrent leaf or are in one of the following cases:
\begin{enumerate}
\item Flat cylinders where the foliation is completely periodic,
\item Affine cylinders where the foliation is Morse-Smale,
\item Minimal subsurfaces where the foliation is minimal,
\item Subsurfaces where the foliation is Cantor-like.
\end{enumerate}
In case (3) and (4), the first return map on any finite union of segments transversal to $\mathcal{F}_{\theta}$ that intersects a non-trivially recurrent leaf is semi-conjugated to a minimal IET.  
\end{thm}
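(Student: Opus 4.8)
The plan is to build the decomposition from the leaf-closure structure of $\mathcal{F}_\theta$ and then feed the recurrent pieces into Gardiner's decomposition theorem. First I would classify the closed invariant sets arising as leaf closures. A directional foliation on a dilation surface has only finitely many singular leaves (separatrices emanating from the cone points), so removing them decomposes $S$ into finitely many ``open'' regions; the closure of a non-singular leaf is either a closed leaf, a finite union of affine/flat cylinders, a subsurface on which the leaf is dense, or a set whose transverse cross-section is a Cantor set. The first step is to make this dichotomy precise: show that every minimal set of $\mathcal{F}_\theta$ is either a closed leaf, a whole subsurface (minimal component), or a ``Cantor-like'' exceptional minimal set, and that there are only finitely many of them because each carries at least one separatrix in its boundary. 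Cutting $S$ along the boundary leaves of these minimal sets and along the singular leaves produces the finitely many subsurfaces of the statement; the subsurfaces with no recurrent leaf are exactly the ones whose every leaf hits the boundary, i.e.\ where the foliation is a union of strips, and the flat-/affine-cylinder dichotomy (case 1 vs.\ 2) is read off from whether the holonomy around the core curve is trivial or a genuine dilation.

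For the second half of the statement I would work on a fixed subsurface $S'$ in case (3) or (4) and a finite union $\Sigma$ of transversal segments meeting a non-trivially recurrent leaf. Taking $\Sigma$ large enough (adding finitely many sub-segments so that it is a complete transversal, cutting every non-singular leaf), the first return map $f$ is an AIET on $\Sigma$, by the discussion preceding the theorem. The key step is to produce the minimal IET $T$ together with the semi-conjugacy $h$ by a ``blowing down'' argument dual to Marmi–Moussa–Yoccoz: the set of points of $\Sigma$ lying on the recurrent leaf closure has a cross-section $K$ which is either all of $\Sigma$ (case 3) or a Cantor set (case 4); the connected components of its complement $\Sigma \setminus K$ form a countable $f$-invariant family of ``wandering'' intervals, and collapsing each such component to a point yields a quotient $h : \Sigma \to \Sigma/{\sim}\;=:Y$. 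One then checks that $f$ descends to a well-defined map $T$ on $Y$, that $T$ is an IET (the dilation factors of $f$ must telescope to $1$ along the collapsed dynamics, since otherwise a wandering interval would have length tending to infinity under iteration, contradicting finite total length — this is where the affine, non-translation nature of $f$ is pinned down), and that $T$ is minimal because $K$ is, by construction, the support of a recurrent orbit and has no proper closed invariant subset of the required type. Finally $f \circ h = h \circ T$ holds because collapsing respects the first-return dynamics, giving the semi-conjugacy.

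The main obstacle I expect is the passage from ``$f$ is an AIET whose recurrent set has cross-section $K$'' to ``the collapsed map $T$ is a genuine minimal IET,'' i.e.\ controlling the affine distortion. Two things must be ruled out: that the complementary intervals fail to be wandering (which would make the quotient ill-defined or non-Hausdorff), and that the logarithms of the slopes do not sum to zero around the collapsed combinatorics (which would leave $T$ an AIET rather than an IET). I would handle the first by a Poincaré-recurrence/total-length argument — the complementary intervals are pairwise disjoint with finite total length, so their lengths go to $0$ under iteration, forcing them to be wandering — and the second by showing that the return map restricted to $K$ has no hyperbolic periodic orbit (otherwise there would be an affine cylinder, contradicting minimality or the Cantor-like hypothesis), so that by a Denjoy-type / Schwarzian-free argument the induced boundary map on $Y$ is conjugate to an isometric IET. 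Verifying that Gardiner's 1980s decomposition in \cite{gardiner} applies verbatim to dilation surfaces — in particular that his hypotheses on the foliation's singularities are met by the separatrix structure of $\mathcal{F}_\theta$ — is the remaining technical point, but it is bookkeeping rather than a genuine difficulty.
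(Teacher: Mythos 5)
Your overall architecture (Gardiner's decomposition for the first half, collapsing the complementary gaps of the recurrent cross-section for the second half) matches the paper's, but the step you yourself flag as the main obstacle --- showing the collapsed map $T$ on $Y=\Sigma/\!\sim$ is a genuine \emph{minimal IET} --- is not actually resolved by what you propose, and this is the real content of the second half of the theorem. After collapsing, $Y$ is only a topological interval: it carries no canonical affine or metric structure with respect to which $T$ could be ``piecewise affine with slopes telescoping to $1$,'' so the length-counting heuristic and the ``Denjoy-type / Schwarzian-free'' argument have nothing to bite on. What is needed is the construction of a $T$-invariant measure with no atoms and full support, and then a conjugacy built from its distribution function. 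The paper imports exactly this as Gutierrez's Key Lemma (Lemma \ref{keylemma}): a continuous injective map of an interval, defined off finitely many points and possessing a dense positive semi-orbit, is topologically conjugate to an IET; the proof builds the invariant measure as a limit of atomic measures $\mu_\beta$ supported on the dense orbit. To apply it one must verify three things your sketch does not address: that the collapsed map is undefined at only \emph{finitely} many points (Proposition \ref{prop6.7}, which uses that the surface has finitely many singularities), that the quotient is homeomorphic to an interval (Lemma \ref{lem6.8}), and that the collapsed map has a dense positive semi-orbit (Lemma \ref{lem6.9}); minimality of the resulting IET then comes from Keane, not from an abstract ``no proper closed invariant subset'' claim.

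Two smaller gaps in your first half. First, ``removing the finitely many singular leaves decomposes $S$ into finitely many open regions'' is false as stated when a separatrix is dense, and the finiteness of minimal/exceptional sets (``each carries a separatrix in its boundary'') is a nontrivial Maier-type bound that you assert rather than prove; the paper sidesteps both by feeding the foliation (shown in Proposition \ref{foliationadmitsflow} to admit a continuous flow with finitely many singularities) directly into Gardiner's theorem, which delivers finitely many components each containing at most one non-trivially recurrent leaf closure. Second, the dichotomy ``cross-section of the recurrent leaf closure is either a finite union of closed intervals or a Cantor set'' (which is what separates case (3) from case (4)) requires an argument --- the paper's Lemma \ref{lem6.10} proves it by transporting a neighbourhood along the leaf through finitely many charts using the linear-holonomy control of Proposition \ref{prop_lin_hol} --- and you assume it rather than prove it.
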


The proof of the first part of the theorem heavily relies on a decomposition theorem of C. J. Gardiner that was published around 1983 for flows with finitely many singularities on surfaces in \cite{gardiner}. It states that given a surface and a flow (or foliation) with finitely many singularities, there exists a finite set of curves that separate the surface into subsurfaces with at most one non-trivially recurrent leaf closure. The proof of the second part of the theorem, which involves an explicit construction of the semi-conjugacy, uses some of the ideas contained in \cite{gutierrez}, a paper from Carlos Gutierrez published in 1983. 

Applying our main theorem to the suspension of any AIET, we deduce the following corollary:

\begin{cor}\label{maincorollary} Given an affine interval exchange transformation $T:X \rightarrow X$, there exists a decomposition of $X$ into finitely many subsets $L_1, \dots L_n$ such that $L_i$ is a finite union of intervals for $i \in \{1,\dots,n \}$ that either does not intersect a recurrent orbit of $T$ or the first return map $f: L_i \rightarrow L_i$ is in one of the following cases: 

\begin{enumerate} 
\item completely periodic,
\item Morse-Smale,
\item minimal,
\item Cantor like.
\end{enumerate}
In case (3) and (4), $f$ is semi-conjugated to a minimal IET.
\end{cor}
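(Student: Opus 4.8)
The plan is to deduce Corollary \ref{maincorollary} from Theorem \ref{maintheorem} by passing through the suspension construction described in the introduction. Given an AIET $T:X\to X$, I would first form its suspension dilation surface $S_T$ — two copies of $X=[0,1)$ glued via $T$, with the left and right vertical edges identified — and denote by $\Sigma\subset S_T$ the horizontal segment corresponding to $X$, so that $\Sigma$ is transversal to the vertical foliation $\mathcal F_{\pi/2}$ and the first return map of $\mathcal F_{\pi/2}$ to $\Sigma$ is exactly $T$. Applying Theorem \ref{maintheorem} to $S_T$ and $\mathcal F_{\pi/2}$ yields a decomposition of $S_T$ into finitely many subsurfaces $S_1,\dots,S_n$ that are each in case (1)--(4) or carry no recurrent leaf.

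The second step is to pull this surface decomposition back to a decomposition of $X$. Since the decomposing curves of Gardiner's theorem are finite in number and can be taken transversal to (or made of leaves of) the foliation, each subsurface $S_i$ meets $\Sigma$ in a finite union of subintervals; set $L_i:=S_i\cap\Sigma$. Because $\Sigma$ is a global cross-section of $\mathcal F_{\pi/2}$ (every vertical leaf meets $\Sigma$), the $L_i$ cover $X$ up to a finite set, and after discarding that finite set (absorbing endpoints into adjacent pieces) we get a genuine partition of $X$ into finite unions of intervals. One then checks that the first return map $f\colon L_i\to L_i$ of $T$ coincides with the first return map of $\mathcal F_{\pi/2}$ to $S_i\cap\Sigma$ inside $S_i$: this is essentially a bookkeeping argument, using that an orbit of $T$ is precisely the trace on $\Sigma$ of a leaf of $\mathcal F_{\pi/2}$, so a $T$-orbit is recurrent iff the corresponding leaf is recurrent, and it returns to $L_i$ iff the leaf returns to $S_i\cap\Sigma$ without leaving $S_i$.

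The third step is to transport the four structural cases and the semi-conjugacy statement. A subsurface with no recurrent leaf gives an $L_i$ meeting no recurrent orbit of $T$; a flat cylinder with completely periodic foliation gives a first return map all of whose orbits are periodic, i.e.\ case (1); an affine cylinder with Morse-Smale foliation gives a first return map with finitely many periodic orbits attracting/repelling all others, i.e.\ case (2); the minimal and Cantor-like cases give (3) and (4) respectively, and here Theorem \ref{maintheorem} directly provides that the first return map of $\mathcal F_{\pi/2}$ on the transversal $S_i\cap\Sigma$ — which we have identified with $f\colon L_i\to L_i$ — is semi-conjugated to a minimal IET. (Here one uses that $S_i\cap\Sigma$ is a finite union of segments transversal to $\mathcal F_{\pi/2}$ that meets a non-trivially recurrent leaf, which holds exactly when $S_i$ is in case (3) or (4).)

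The main obstacle, and the only place requiring genuine care rather than translation, is the compatibility of the two notions of ``first return map'': matching the return map of $T$ restricted to $L_i$ with the return map of the vertical foliation to the cross-section $S_i\cap\Sigma$ \emph{within} the subsurface $S_i$. One must argue that the excursions of a leaf outside $S_i$ and back correspond exactly to iterates of $T$ that leave $L_i$, which requires knowing that the Gardiner decomposing curves can be chosen either as unions of leaves or transversal to $\mathcal F_{\pi/2}$ and, crucially, transversal to $\Sigma$ so that the combinatorics of crossings is finite and well-defined; this is where I would need to invoke the precise form of the decomposition from the proof of Theorem \ref{maintheorem} rather than just its statement. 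Everything else — the reduction to finitely many intervals, discarding a finite exceptional set, and reading off cases (1)--(4) — is routine once this identification is in place.
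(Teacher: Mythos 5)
Your proposal follows essentially the same route as the paper: suspend $T$ to the dilation surface $S_T$, apply Theorem \ref{maintheorem} to the vertical foliation, set $L_i := S_i\cap L$ for the lower cross-section $L$, and read off the four cases plus the semi-conjugacy from Proposition \ref{mainproposition} applied to the finite union of transversal segments $L_i$. The compatibility issue you flag is handled implicitly in the paper by the observation that every crossing of a vertical leaf with $L$ is an iterate of $T$, so the first return of $T$ to $L_i$ coincides with the first return of the foliation to the transversal $L_i$; no further analysis of the Gardiner curves is needed.
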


Both our main theorem as well as its corollary will be visualized throughout the text using the Disco surface as an example. We hope that this will provide the reader not only with the theoretical background on the structure of foliations on dilation surfaces, but also with a geometric picture to keep in mind. 

\subsection{Short roadmap} We conclude the introduction by providing a short roadmap for the coming chapters. In the second chapter, we explain the prerequisite material for this thesis, including the formal definition of foliations on surfaces. Chapter 3 is an introduction to dilation surfaces and should be read by anyone not familiar with the subject. In particular, we define the notion of \textit{linear holonomy} that is used in many of our key arguments in the proof of our main theorem. In Chapter 4, we give an overview of the four different types of behaviours of directional foliations on dilation surfaces. Here we also introduce the \textit{Disco surface}, our most important example throughout the thesis. Chapter 5 is dedicated to the statement and explanation of Gardiner's decomposition theorem. We then prove our main theorem as well as its corollary for affine interval exchange transformations in Chapter 6.

%It is known since the work of Denjoy that any sufficiently regular orientation-preserving diffeomorphism of the circle with irrational rotation number $\theta$ is topologically conjugate to the irrational rotation by $\theta$ on the circle and has no wandering intervals. There is no equivalent statement for GIETs. In fact, in 1997 Camelier-Guttierez found a uniquely ergodic AIET with wandering intervals that is semi-conjugated to an IET, thus providing a counter-example. This initiated a study of the regularity of the conjugacy between IETs and AIETs, pursued by Cobo and later Bressaud-Hubert-Maas. Finally, Marmi, Moussa and Yoccoz proved that almost every IET admits a semi-conjugated AIET with a wandering interval. 

%What we state as Poincare-Yoccoz theorem is that every GIET which has an 'infinitely complete' rotation number (i.e when doing Rauzy-Veech induction all letters win infinetly often) is semi-conjugated to an IET. 

\section{Preliminaries}

In this chapter, we recall all of the prerequisite material for the following chapters. In particular, we introduce surfaces with singularities and foliations on surfaces. We further recall the definition of a Cantor set and give the formal definition of affine interval exchange transformations. 

\subsection{Surfaces with singularities} In order to introduce dilation surfaces in Chapter 3, we need the definition a \textit{surface} using an \textit{atlas}. An atlas consists of a collection of \textit{charts} that cover a topological space and describe its local structure. More precisely,

\begin{defs} A \textit{chart} for a topological space $X$ is a homeomorphism $\phi$ from an open subset $U$ of $X$ to an open subset of $\mathbb{C}$. The chart is traditionally recorded as the ordered pair $(U,\phi)$.
\end{defs}
\begin{defs} An \textit{atlas} for a topological space $X$ is an indexed family $\mathcal{A}=(U_i,\phi_i)_{i\in I}$ of charts on $X$ which \textit{covers} $X$, that is $\bigcup_{i \in I} = X$. 
\end{defs}

\begin{figure}[h]
\centering
\includegraphics[width=0.53\textwidth]{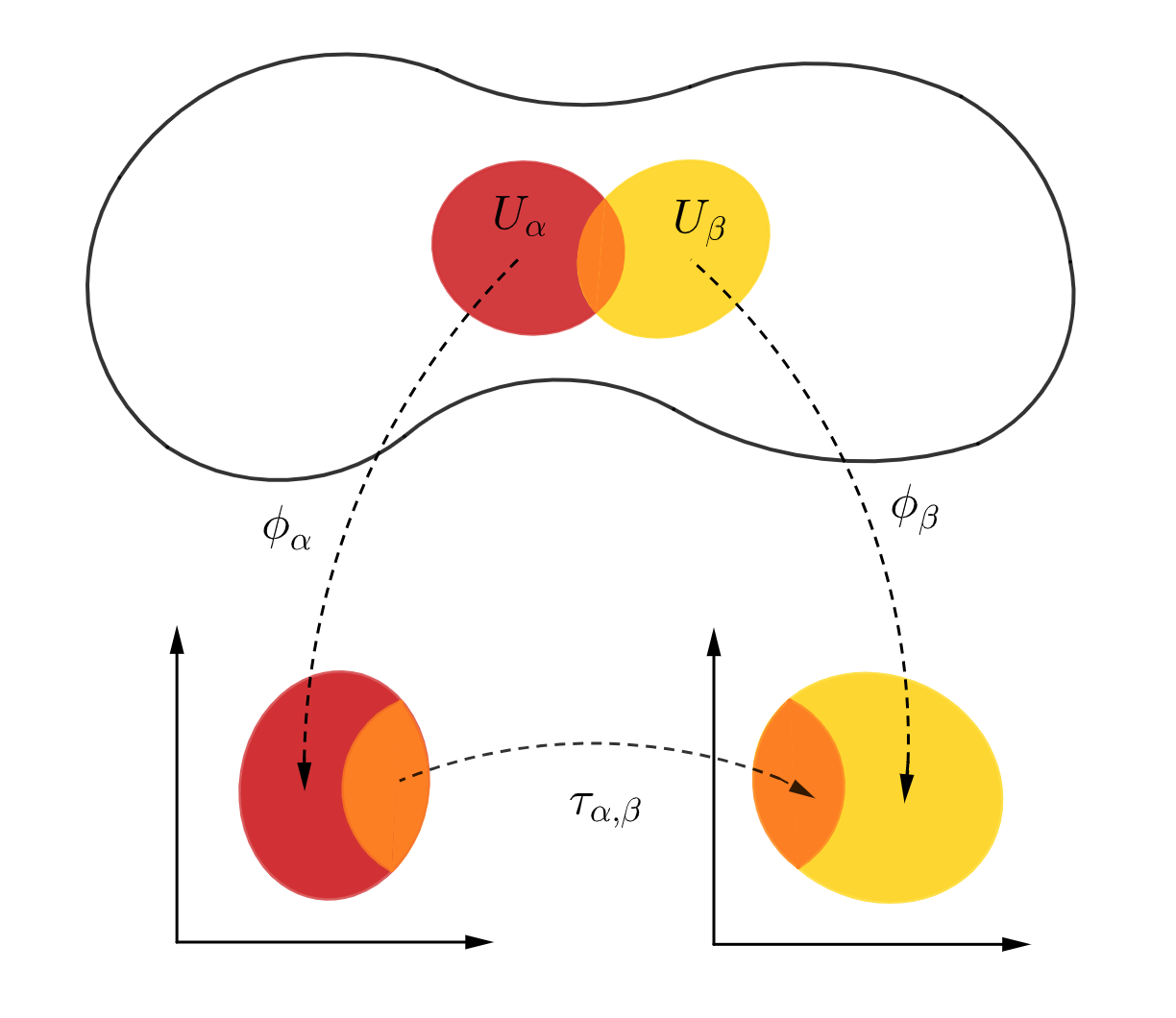}
\caption{The transition map between two charts of an atlas.}
\end{figure}

We call a topological space $X$ that has such an atlas a \textit{surface} or a \textit{two-dimensional manifold}. It is possible to ask for additional structure on a surface using so-called \textit{transition maps}. These maps describe how two charts relate to each other, an illustration is given in Figure 6.  

\begin{defs} Suppose $(U_{\alpha},\phi_{\alpha})$ and $(U_{\beta},\phi_{\beta})$ are two charts such that $U_{\alpha} \cap U_{\beta}$ is non-empty. The \textit{transition map} $\tau_{\alpha,\beta}: \phi_{\alpha}(U_{\alpha} \cap U_{\beta}) \mapsto \phi_{\beta}(U_{\alpha} \cap U_{\beta})$ is the map defined by $$\tau_{\alpha,\beta}=\phi_{\beta} \circ \phi_{\alpha}^{-1}$$ 
\end{defs}

For example, if we require the transition maps to be differentiable, we obtain what we call a \textit{differentiable surface}. For the definition of dilation surfaces, we will require these maps to be of the form $z \rightarrow \lambda z + v$, where $\lambda \in \mathbb{R}_{>0}$ and $v \in \mathbb{C}$. 

Some surfaces might have points where the atlas structure on them is not well-defined, these points are called \textit{singularities}. As we will see, dilation surfaces are surfaces with finitely many singularities. We will further see that the neighbourhood of a singular point on a dilation surface is defined to be homeomorphic to a \textit{k-sheet covering} of the punctured complex plane. Hence, we give the definition of a k-sheet covering below. 

\begin{defs}
    Let $X$ be a topological space. A \textit{k-sheet covering} of $X$ is a continuous map $\pi: E \mapsto X$ such that there exists a discrete space $D$ of cardinality $k$ and for every $x \in X$ there is an open neighbourhood $U \subset X$ such that $\pi^{-1}(U) = \amalg_{d\in D} V_d$ and $\pi|_{V_d}: V_d \mapsto U$ is a homeomorphism for every $d \in D$.
\end{defs}

Another topological notion that will be important for us to define the \textit{linear holonomy} of a curve on a dilation surface in Chapter 3 is a \textit{free} or \textit{based homotopy} of \textit{loops} on a surface.    

\begin{defs} Let $X$ be a topological space, then a path $\sigma:[0,1] \rightarrow X$ on $X$ is called a loop on $X$ if $\sigma(0) = \sigma(1)$. 
\end{defs}

\begin{defs} Let $X$ be a topological space, let $\sigma$ be a loop on $X$. A \textit{free homotopy} of loops is a continuous map $H:[0,1] \times [0,1] \rightarrow X$ such that $\gamma_s(t):= H(s,t)$ is a loop for each fixed $s \in [0,1]$, that is $H(s,0) = H(s,1)$ for all $s \in [0,1]$. \end{defs}

\begin{defs} For a point $x \in X$, a \textit{homotopy of loops based at $x$} is a homotopy $H:[0,1] \times [0,1] \rightarrow X$ where $H(s,0)=H(s,1)=x$ for all $s \in [0,1]$. \end{defs}

These definitions give rise to two equivalence classes of loops on $X$, called the \textit{free homotopy classes} and the \textit{based homotopy classes} of $X$. Two loops are in the same free homotopy class if they are obtained from each other through a free homotopy of loops and they are in the same based homotopy class if they are obtained from each other through a based homotopy of loops.

By now, we have introduced surfaces with singularities and some topological notions that that will help us to define key objects on dilation surfaces. In a next step, we want to equip surfaces with a dynamical structure, in particular we want to define \textit{flows} and \textit{foliations} on surfaces. 

\subsection{Flows and foliations} A flow on a surface is a map that induces a movement of all points on the surface over time. More formally,

\begin{defs}Let $S$ be a surface. A \textit{flow} on $S$ is a continuous map $\phi: S \times \mathbb{R} \rightarrow S$ with the properties
\begin{align*}
    \phi(s,0) = s,\hspace{0.5cm} \phi(s,t_1+t_2) = \phi(\phi(s,t_1),t_2), \hspace{0.4cm} \forall s \in S, t_i \in \mathbb{R}. 
\end{align*}
We further call a point $p \in S$ for which $\phi^t(p)$ is not well-defined for any $t \in \mathbb{R}$ a \textit{singular point} of $\phi$. 
\end{defs}

\begin{defs} Let $\phi$ be a flow on a surface $S$. Fix $t\in \mathbb{R}$ and let $\phi^t$ be the map $\phi(\cdot,t): S \rightarrow S$. The \textit{forward (backward)} \textit{orbit} of a point $s \in S$ is the set 
\begin{align*}
    O^+(s)=\{ \phi^t(s):t \geq 0 \}, \hspace{1cm} (O^-(s)=\{ \phi^t(s):t \leq 0 \} )
\end{align*}
The union  $O(s)=O^+(s) \cup  O^-(s)$ is called the \textit{orbit} through $s$.
\end{defs}

We will see in the next chapter that dilation surfaces are not equipped with a well-defined notion of distance, hence also not with a well-defined notion of unit speed along which we can "travel" along an orbit. Hence we have to change to the machinery of foliations. A foliation on a surface is essentially a partition of the surface into curves called \textit{leaves} that are locally parallel to each other. Foliations are a generalization of flows, as the orbits of a flow determine the \textit{leaves} of a foliation. In line with \cite{Camacho} we define foliations on surfaces in the following way:
\begin{defs}\label{deffoliation}
    Let $S$ be a surface. A \textit{foliation} of $S$ is an atlas $\mathcal{F}$ on $S$ such that any transition map $\tau_{\alpha,\beta}$ is of the form $$\tau_{\alpha,\beta}(x,y) = (h_1(x,y),h_2(y))$$ where $h_1,h_2$ are diffeomorphisms.
    \end{defs}
    
\begin{figure}[h]
\centering
\includegraphics[width=0.5\textwidth]{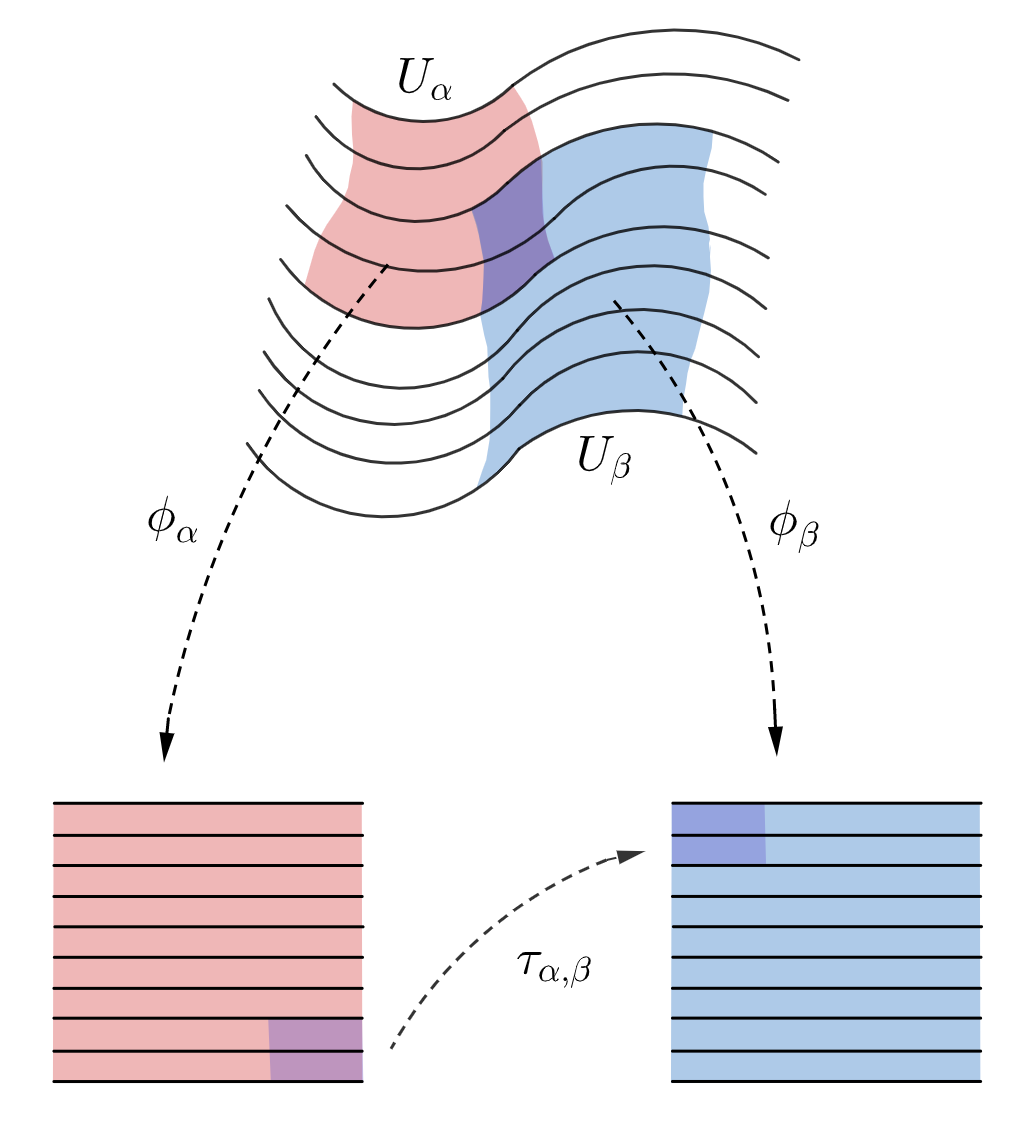}
\caption{The transition maps between two charts of a foliation.}
\end{figure}

A foliation on a surface might have points on the surface where the atlas $\mathcal{F}$ is not well-defined. We call these points \textit{singular points of the foliation}. All non-singular points can be partitioned into \textit{leaves} of the foliation. 
 
\begin{defs} For a chart $(U,\phi) \in \mathcal{F}$, let $h_c$ be the intersection between the horizontal line $y=c$ and $\phi(U)$. If $c$ is chosen such that this intersection is nonempty, then $\phi^{-1}(h_c)$ is called a \textit{plaque} of $\mathcal{F}$. A \textit{path of plaques} of $\mathcal{F}$ is a sequence $\alpha_1, \dots, \alpha_k$ of plaques such that $\alpha_j \cap \alpha_{j+1} \neq \emptyset$ for all $j \in \{1,\dots,k-1\}$.
\end{defs}

\begin{defs} Define the equivalence relation $p \sim q$ if there exists a path of plaques $\alpha_1, \dots, \alpha_k$ with $p \in \alpha_1, q \in \alpha_k$. Then the equivalence classes of $\sim$ are called \textit{leaves} of $\mathcal{F}$. 
\end{defs}

The study of the leaves of foliations is central to this thesis. One way to describe a leaf is through its set of accumulation points. For this, we introduce the notion of the \textit{limit set} of a leaf.

\begin{defs}\label{deflimitset} Let $l$ be a leaf of a foliation on the surface $S$. The \textit{limit set of $l$} is the intersection of the closures of the sets $l-K$ where $K$ is any compact subset of $l$. 
\end{defs}

We call a leaf $l$ \textit{regular} or \textit{finite} if its limit set does not consist of one or two singularities, if it consists of two singularities we call it a \textit{saddle connection}. A regular leaf that is homeomorphic to the circle $S^1$ is called a \textit{closed leaf} or a \textit{periodic leaf}. 
%\begin{prop} (Camacho) Let $\mathcal{F}$ be a foliation on compact surface $M$. There is a cover $\mathcal{C} = \{U_i | i \in I \}$ of $M$ by domains of local charts of $\mathcal{F}$ such that if $U_i \cap U_j \neq \emptyset$ then $U_i \cup U_j$ is contained in the domain of a local chart of $\mathcal{F}$.  
%\end{prop}
%\begin{lem} For every open cover $\mathcal{U}$ of a compact metric space $X$, there exists a real number $\delta>0$ such that every open ball in $X$of radius $\delta$ is contained in some element of $\mathcal{U}$. Any number $\delta$ satisfying the property above is called a \textit{Lebesgue number} for the covering $\mathcal{U}$ in $X$.
%\end{lem}
%\begin{proof}[Proof of the Proposition]
    %Consider a cover of $M$ by domains of foliation charts of $\mathcal{F}$, $\{V_i | i = 1, \dots, k_n \}$. Let $\delta$ be the Lebesgue number of this cover with respect to some fixed metric on $M$. Now take a cover of of $M$ by domains of foliation charts  $\{U_j^n | j = 1, \dots, l_n \}$ such that the diameters of the $U_j^n$ are less than $\delta_n/2$ for all $j = 1, \dots, l_n$. Then if $U_i^n \cap U_j^n \neq \emptyset$, we have that $U_i^n \cup U_j^n \subset V_{\mu}^n$ for some $\mu \in \{1, \dots, k_n \}$. Then $\mathcal{C} = \{U_j^n | j = 1, \dots, l_n \}$.
%\end{proof}

Another way to describe leaves of foliations on dilation surfaces requires so-called \textit{transversal segments}. A transversal segment on a foliated surface is a segment that never travels "along" a leaf. More formally,

\begin{defs} A segment $\Sigma$ on a surface $S$ is called a \textit{transversal segment} if for every point $p \in \Sigma$, there exists a neighbourhood $U_p$ of $p$ and a diffeomorphism $\psi: U_p \rightarrow \mathbb{R}^2$ such that $\psi(p) = (0,0)$ and
\begin{itemize}
\item$\psi(\Sigma \cap U_p)$ is mapped to the x-axis given by $x=0$. 
\item If $l$ is a leaf that intersects $U_p$, then $\psi(l \cap U_p)$ is mapped to the line $y=c$ for some constant $c \in \mathbb{R}$.
\end{itemize}
\end{defs}

In Chapter 3, we will define the \textit{first return map} on a transversal segment for dilation surfaces which yields a way of describing the leaves of a foliation using a one-dimensional map. We will further see in Chapter 4 that in some cases, foliations on dilation surfaces can accumulate on a Cantor set which is why we recall the definition of a Cantor set in the next section.

\subsection{Cantor sets} The reader might be familiar with the ternary Cantor set, obtained by removing the open middle third of an interval and repeating this process over and over for the intervals that remain. A general Cantor set is homeomorphic to the ternary Cantor set. Let $S$ be a subset of a topological space $X$. 
\begin{defs} Let $x \in S$, then $x$ is called an \textit{isolated point} of $S$ if there exists a neighbourhood of $x$ that does not contain any other points of $S$.
\end{defs}
\begin{defs} $S$ is \textit{connected} if we cannot write $S$ as the union of two or more disjoint non-empty open subsets. $S$ is \textit{totally disconnected} if the only connected subsets of $S$ are one-point sets. 
\end{defs}

\begin{defs} $S$ is called a \textit{Cantor set} if it is closed, totally disconnected and has no isolated points. 
\end{defs}

\subsection{IETs and AIETs} To conclude this chapter, we provide the formal definition of \textit{standard} and \textit{affine interval exchange transformations} and \textit{semi-conjugacies}. During the course of this thesis, we will see how these objects are related to dilation surfaces and why they are so important to us. 

\begin{defs} Let $a=a_0 < a_1 \dots a_{m-1} < a_m = b$ be a partition of the interval $X=[a,b)$. A map $T:X \rightarrow X$ is called \textit{ affine interval exchange transformation} or AIET in short if it is a bijective map of the form 
$$
T(x) = \lambda_i x + c_i, \hspace{0.5cm} x \in [a_{i-1}, a_i), \hspace{0.5cm} i = 1,2,...,m
$$
for some vectors $c = (c_1,...,c_m), \lambda=(\lambda_1,..., \lambda_m) \in \mathbb{R}^m$. If $\lambda = (1,1,...,1)$, we call $T$ a  \textit{(standard) interval exchange transformation} or IET in short.
\end{defs}
 
\begin{figure}[h]
\centering
\includegraphics[width=0.65\textwidth]{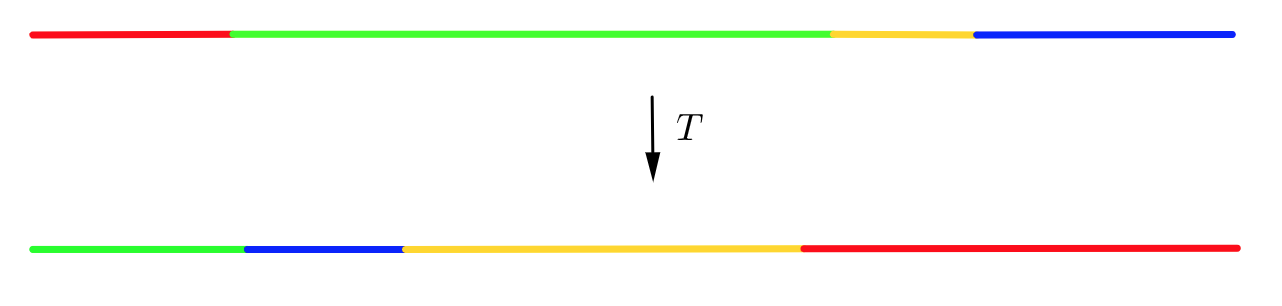}
\caption{Illustration of an AIET on four intervals.}
\end{figure}
\vspace{3mm}

%\begin{defs} A \textit{generalized interval exchange transformation} or GIET in short is an affine interval exchange transformation for which $T|_{[a_{i-1},a_i]}$ is an orientating-preserving diffeomorphism.
%\end{defs}

We state the next few definitions for general maps from a topological space to itself, of which interval exchange transformations are a particular case. Let $X$ be a topological space and $f:X \rightarrow X$ be a map.

\begin{defs} The \textit{forward (backward) orbit} of a point $p \in X$ is defined as the set
\begin{align*}
    O^+(p)=\{f^n(p):n \in \mathbb{N}, n \geq 0 \}, \hspace{1cm} (O^-(p)=\{ f^n(p):n \in \mathbb{Z}, n \leq 0 \} )
\end{align*}
An \textit{infinite orbit} is simply an orbit whose set of points is infinite, a \textit{finite orbit} is an orbit whose set of points is finite. 
\end{defs}

\begin{defs} $p \in X$ is called a \textit{periodic point} if there exists $n \in \mathbb{N}$ such that $f^n(p) = p$. \end{defs}

\begin{defs} $f$ is called \textit{minimal} if there are only finitely many finite orbits of points $p \in X$ and if every infinite orbit is dense in $X$.
\end{defs}
Later on in this thesis, we will establish a relationship between AIETs that arise from dilation surfaces and minimal IETs using so-called \textit{semi-conjugacies}. Hence we state the definition of a semi-conjugacy below. 

\begin{defs} Let $X$ and $Y$ be topological spaces, let $f:X \rightarrow X$ and $g:Y \rightarrow Y$ be piecewise continuous functions. We say that $f$ is \textit{semi-conjugated} to $g$ if there exists a map $h : Y \rightarrow X$ that is continuous, monotonic and surjective such that $$ f \circ h = h \circ g.$$ If in addition $h$ is injective and its inverse is continuous, meaning that $h$ is a homeomorphism, then we say that $h$ and $g$ are \textit{conjugated}. 
\end{defs}

We have now defined all of the prerequisite material and we now move on to the main part of this thesis. We proceed with an introduction to dilation surfaces.

\label{chapter2}
\section{Introduction to Dilation Surfaces}  
In this chapter, we formally introduce dilation surfaces and explain some of their key properties. We further define \textit{directional foliations} on dilation surfaces as well as the key tools used in the study of these foliations.

\subsection{Dilation surfaces} In line with \cite{ghazouani} and \cite{cascades}, we define dilation surfaces as follows: 
\begin{defs} A \textit{dilation surface} is a surface $S$ together with a finite set of points $\Sigma \subset S$ and an atlas $\mathcal{A}=(U_i,\phi_i)_{i\in I}$ on $S \backslash \Sigma$ whose charts $\phi_i$ take values in $\mathbb{C}$ such that:
\begin{enumerate}
\item the transition maps are locally restrictions to elements of Aff$_{\mathbb{R}_{+}^*}(\mathbb{C}) = \{z \mapsto \lambda z+v \hspace{0.1cm} |\hspace{0.1cm} \lambda \in \mathbb{R}, \lambda>0, v \in \mathbb{C}\};$  
\item each point of $\Sigma$ has a punctured neighborhood which is homeomorphic to some k-sheet covering of $\mathbb{C}^{*}$.
\end{enumerate}
We call an element of the set $\Sigma$ a singularity of the dilation surface $S$. \end{defs}

While this definition is rather formal, the picture of a dilation surface to have in mind is a collection of euclidean polygons glued together along pairs of parallel sides using dilations and translations only. This is illustrated in Figure 9, where two euclidean polygons are glued together to obtain a genus two dilation surface of angle $6 \pi$ called the two-chamber surface. Note that the vertices of the two polygons all project to the single singularity on the surface. 

\begin{figure}[h]
\centering
\includegraphics[width=1.0\textwidth]{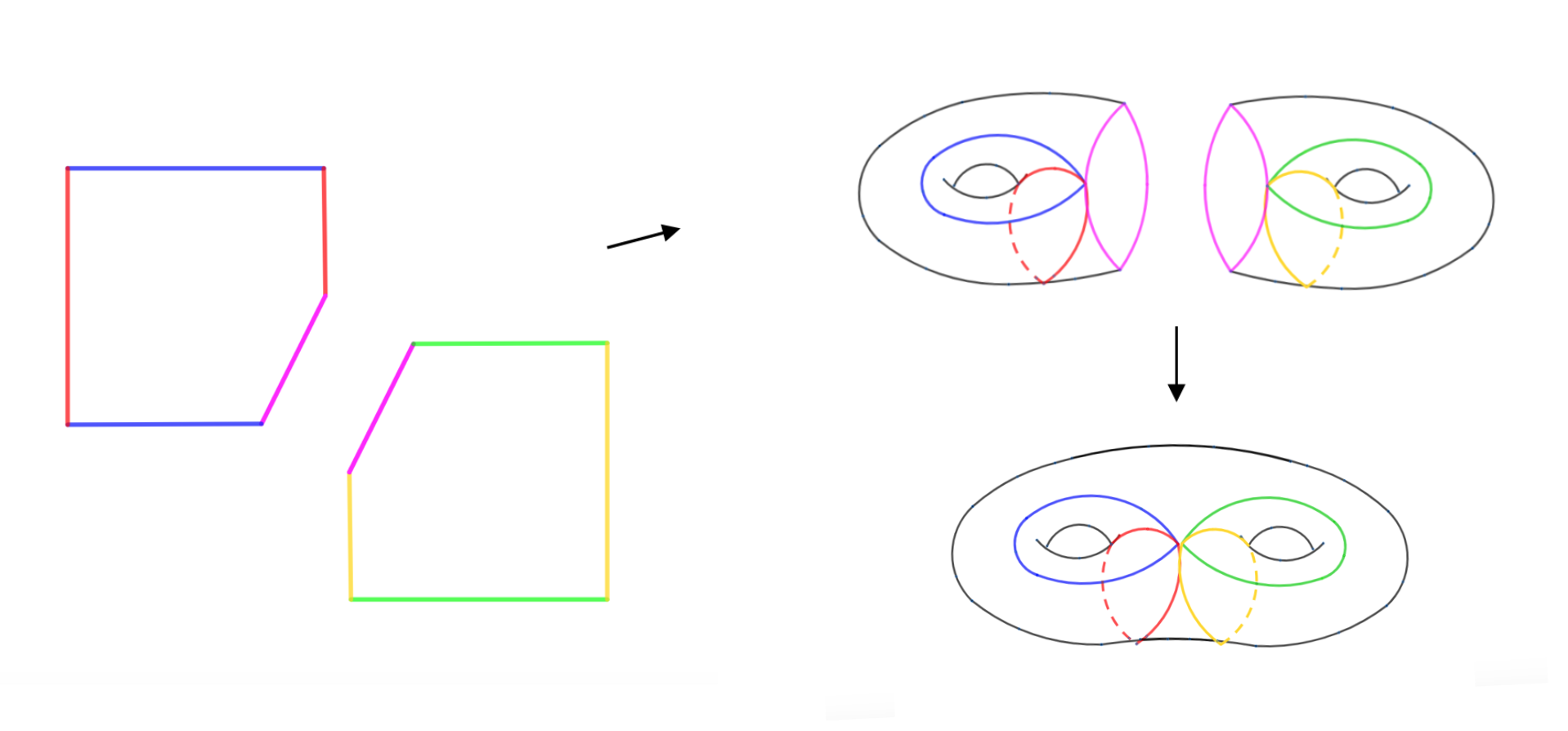}
\caption{The two-chamber surface.}
\end{figure}

To make this construction more formal, let $P_1, \dots P_m$ be a collection of polygons in the complex plane with oriented edges such that the interior of the polygon is always to the left of the edges. Suppose that for any edge $s_i$ of $P_k$ there is an edge $s_j$ of $P_l$, parallel so $s_i$ with opposite orientation, where $j \neq i$. Suppose further that there is a map $f_i \in $Aff$_{\mathbb{R}_{+}^*}(\mathbb{C})$ such that $f_i(s_i)=s_j$ and $f_j = f_i^{-1}$. Consider the quotient space $S$ obtained by identifying all $s_i$ with the corresponding $s_j$ through the map $f_i$, denote by $h$ the quotient map.

\begin{prop} The surface $S$ obtained from the polygons $P_1, \dots, P_m$ by identifying the edge $s_i$ with the edge $s_j$ through the map $f_i \in $Aff$_{\mathbb{R}_{+}^*}(\mathbb{C})$ as described above is a dilation surface. 
\end{prop}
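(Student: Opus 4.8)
The plan is to verify the two defining conditions of a dilation surface directly from the gluing construction. First I would describe the candidate atlas on $S \setminus \Sigma$, where $\Sigma := h(\{\text{vertices of the } P_k\})$ is the (finite) set of points onto which polygon vertices are mapped. For a point $p \in S \setminus \Sigma$ there are three cases: $p$ lies in the image of the interior of some $P_k$, $p$ lies in the image of the interior of a glued edge, or $p = h(q)$ where $q$ is a vertex of some polygon but $p \notin \Sigma$ — the last case cannot happen by definition of $\Sigma$, so only the first two occur. In the interior case, $h$ restricted to a small disc around the preimage is a homeomorphism onto a neighbourhood of $p$, and we take its inverse (composed with the identity chart of $\mathbb{C}$) as a chart. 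In the edge case, $p$ has a neighbourhood built from two half-discs, one in $P_k$ adjacent to $s_i$ and one in $P_l$ adjacent to $s_j$; using $f_i$ we map the $P_l$-side half-disc into the plane of the $P_k$-side so that the two half-discs fit together along the common edge into a genuine open disc in $\mathbb{C}$, giving a chart. One should check these neighbourhoods are open and the charts homeomorphisms, which is routine since $h$ is a quotient map and the gluing is by homeomorphisms on closed edges.

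Next I would check condition (1), that transition maps lie in $\mathrm{Aff}_{\mathbb{R}_+^*}(\mathbb{C})$. Two interior charts coming from the same polygon have transition map the identity; two interior charts from different polygons have disjoint images and no transition map. The only nontrivial overlaps involve an edge chart: an edge chart overlaps an interior chart of $P_k$ on (part of) the $P_k$-side, where the transition is the identity (or a translation, depending on how the identity chart of $\mathbb C$ is normalized), and overlaps an interior chart of $P_l$ on the $P_l$-side, where the transition is exactly $f_i \in \mathrm{Aff}_{\mathbb{R}_+^*}(\mathbb{C})$ by construction. Two edge charts can overlap only if their edges share an endpoint, but such an endpoint is a vertex, hence in $\Sigma$, so near any point of $S \setminus \Sigma$ at most one edge is involved and two distinct edge charts have disjoint domains after shrinking. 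Hence every transition map is locally a restriction of an element of $\mathrm{Aff}_{\mathbb{R}_+^*}(\mathbb{C})$.

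Then I would check condition (2): each $p \in \Sigma$ has a punctured neighbourhood homeomorphic to a $k$-sheet cover of $\mathbb{C}^*$. The preimage $h^{-1}(p)$ is a finite set of polygon vertices $q_1, \dots, q_r$; around each $q_t$ take a small circular sector of $P_{k_t}$ of some angle $\alpha_t$. The edge-identifications $f_i$ cyclically glue these sectors edge-to-edge (one must argue the gluing closes up into a cycle — this follows because each edge is glued to exactly one other edge and the sectors around $h^{-1}(p)$ form a finite collection whose boundary edges are paired up), producing a cone of total angle $\sum_t \alpha_t = 2\pi k$ for some positive integer $k$ (the angle is a multiple of $2\pi$ because, after developing, going once around returns to a dilation of the start, and a cone angle that is not a multiple of $2\pi$ would not close up topologically — here one uses that the developing map around the puncture, being a composition of affine maps, sends a loop around $p$ to a loop around $0$ in $\mathbb C^*$, and topologically this forces the sector-cone to be a $k$-fold branched-type neighbourhood). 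Removing the apex, this cone deformation-retracts onto a circle wrapping $k$ times, and is homeomorphic to the $k$-sheeted cover of $\mathbb{C}^*$; I would make the homeomorphism explicit via the map $z \mapsto z^{1/k}$ on the developed picture, or simply note that any cone-like neighbourhood of angle $2\pi k$ punctured is homeomorphic to such a cover.

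The main obstacle I anticipate is condition (2): carefully arguing that the sectors around a point of $h^{-1}(p)$ assemble into a single cone with angle an integer multiple of $2\pi$, and that the resulting punctured neighbourhood is genuinely homeomorphic to a $k$-sheet cover of $\mathbb{C}^*$. This requires a clean combinatorial argument that the edge-pairing $s_i \leftrightarrow s_j$ organizes the vertices in $h^{-1}(p)$ into one cyclic orbit (or a disjoint union of such, each giving one cone point — though if $h^{-1}(p)$ is connected under the edge identifications it is a single cycle), together with the observation that any surface point with a cone-sector neighbourhood of total angle $2\pi k$, $k \ge 1$, has punctured neighbourhood homeomorphic to the $k$-sheeted covering of $\mathbb{C}^*$. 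Everything else — openness of the chart domains, the homeomorphism property, and the affine form of the transition maps — is a direct unwinding of the quotient construction.
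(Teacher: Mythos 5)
Your proposal is correct and follows essentially the same route as the paper: the same two chart types (interior charts from the inverse quotient map, edge charts from two half-discs assembled via $f_i$), the same verification that transitions are the identity or $f_i$ itself, and the same cone-of-sectors argument at the vertices with total angle $2\pi k$ yielding the $k$-sheet cover of $\mathbb{C}^*$. Your extra care about the edge-pairing closing up into a cycle is a point the paper's proof passes over implicitly, but it does not change the structure of the argument.
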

\begin{proof} We first show that for each $p \in S$ that is not the image of a vertex under $h$, we can find a neighbourhood $U_p$ of $p \in S$ and a chart $\phi_p: U_p \mapsto \mathbb{C}$ such that for any $p,q \in S$, whenever $U_p \cap U_q \neq \emptyset$, $\phi_p \circ \phi_q^{-1}$ is of the form $z \mapsto \lambda z+v \in$ Aff$_{\mathbb{R}_{+}^*}(\mathbb{C})$. If $p \in S$ is not the image of a vertex, there are two cases:

\begin{enumerate}

\item $p \in S$ is the image under $h$ of an interior point $\overline{p}$ of a polygon $P$. Then there exists a neighbourhood $U_{\overline{p}}$ in $p$ that is homeomorphic to a neighbourhood $U_p$ in $S$ via the quotient map. We define the inverse quotient map $h^{-1}: U_p \mapsto U_{\overline{p}}$ to be the chart $\phi_p$, cleary this map is a homeomorphism. 

\item $p \in S$ is the image of an interior point of an edge $s_j = f_i(s_i)$ of a polygon $P$. Then $p$ has two pre-images $\overline{p}_i,\overline{p}_j $ and $p$ has a neighbourhood $U_p$ in $S$ whose pre-image is the union of two half-disks $D_i, D_j$ of parallel diameters and radii $r_i,r_j$, one of them bordering on $s_i$ and the other one on $s_j$. We have that $U_{\overline{p}} := f_i(D_i) \cup D_j$ is a full disk of radius $r_j$. Take the inverse quotient map $h^{-1}$ and modify it to be injective by sending any point in $U_p$ that has two pre-images to its pre-image in $s_i$ only. Then compose the modified inverse quotient map with $f_i$ to obtain a homeomorphism $\phi_p: U_p \mapsto U_{\overline{p}}$.
\end{enumerate}
\begin{figure}[h]
\centering
\includegraphics[width=0.6\textwidth]{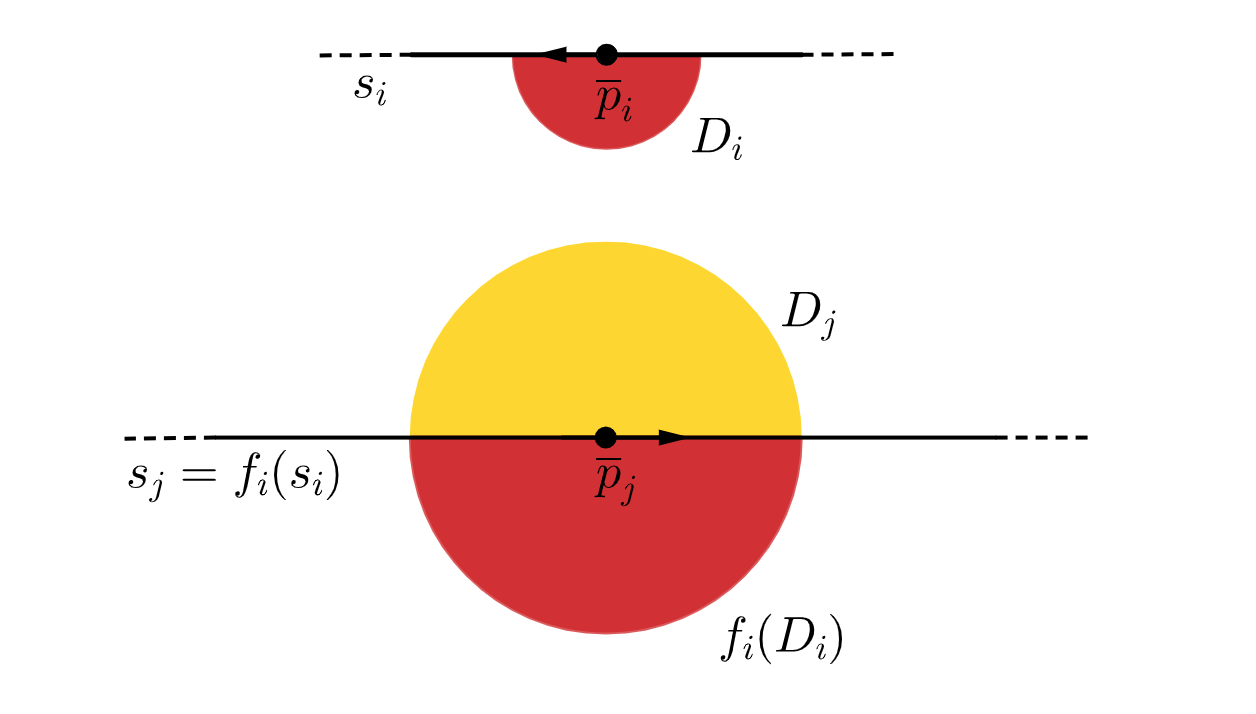}
\caption{The construction of $U_{\overline{p}} \subset \mathbb{C}$.}
\end{figure}

If $p,q$ are both in the image of an interior point of some $P_i$, then $\phi_p \circ \phi_q^{-1}$ is the identity, if one of them is the image of an interior point of an edge of some $P_i$, then $\phi_p \circ \phi_q^{-1}$ is either the identity or a map in Aff$_{\mathbb{R}_{+}^*}(\mathbb{C})$. Hence, the first part of the definition is satisfied.

Let now $p \in S$ be the image of a vertex of a polygon $P$. We want to show that there exists a neighbourhood $U_p$ of $p$ such that $U_p \backslash \{p \}$ is homeomorphic to a k-sheet covering of $\mathbb{C}^{*}$. The pre-image of $p$ is a set of vertices $p_1, \dots p_k$ of $P$ since endpoints of edges are identified  only with endpoints of edges. There is a neighbourhood $U_p$ of $p$ such that the pre-image of $U(p) \backslash \{p \}$ in $P$ is a union of angular sectors $S_i$ of angle $\theta_i$ and radius $r_i$, where $i=1 \leq i \leq k$ . Any of these sectors is bounded by edges $m_i$ and $n_i$, so that $m_{i+1}$ is parallel to $n_i$ for $i=1 \leq i \leq k-1$. Because $n_k$ is parallel to $e_1$, there is an integer $l \in \mathbb{N}$ so that
$$
\sum_{i=1}^k \theta_i = 2\pi l
$$
We construct a homeomorphism $t$ from the pre-image of $U_p \backslash \{p\}$ to a k-sheet covering of $\mathbb{C}^*$ as follows: Take the angular sector $S_1$ of radius $r_1$ and map it (using a map in Aff$_{\mathbb{R}_{+}^*}(\mathbb{C})$) to the angular sector $S_1'$ of radius 1 centered at $0$, so that $m_1$ is mapped to the horizontal axis. Then map the angular sector $S_2$ of radius $r_2$ to the angular sector $S_2'$ of radius 1 centered at $0$, so that $m_2$ is mapped to $e_1$. As the angles add up to a multiple of $\pi$, we will obtain a k-sheet covering of $\mathbb{C}^*$. Then, modify the inverse quotient map to be injective so that whenever a point has two pre-images, choose only the one that lies in $\bigcup_{i=1}^{k} e_i$. The composition of the inverse quotient map with $t$ will give the desired homeomorphism between $U_p \backslash \{p\}$ and a k-sheet covering of $\mathbb{C}^*$. 
\end{proof}
In Figure 11, we illustrate a neighbourhood of the singular point on the two chamber surface that is homeomorphic to a  3-sheet covering of $\mathbb{C}^*$.  

\begin{figure}[h]
\centering
\includegraphics[width=0.75\textwidth]{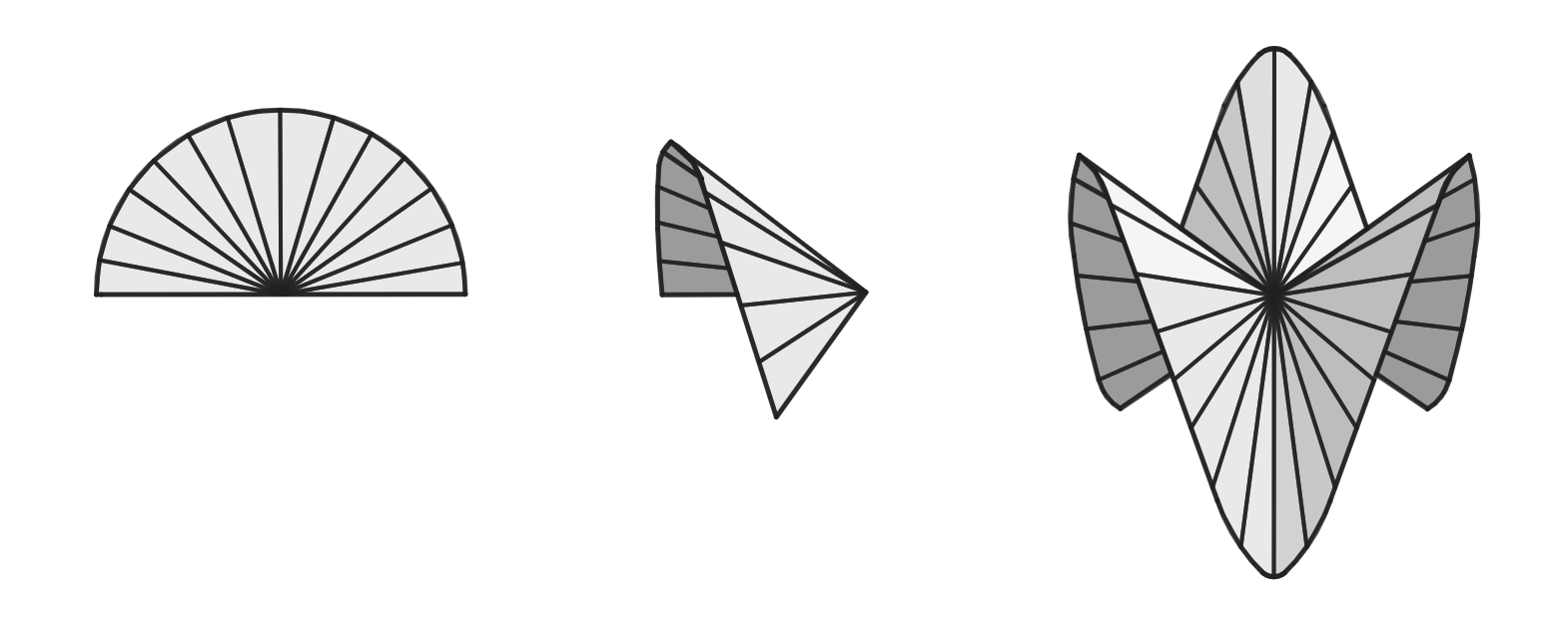}
\caption{A 3-sheet covering of $\mathbb{C}^*$.}
\end{figure}

\begin{rem} We have shown that the surface we obtain by glueing the edges of a polygon using dilations is indeed a dilation surface. However, not every dilation surface has a polygonal representation such that the vertices of the polygons project to actual singularities of the surface. We will see a result in Chapter 4 that gives us a necessary and sufficient condition for such a polygonal representation to exist.
\end{rem}

\subsection{Directional foliations and the first return map} Our main objects of interest, \textit{directional foliations}, come naturally with every dilation surface. To see this, we first remark that in the complex plane, a straight line in direction $\theta \in S^1$ is simply a straight line whose angle with the horizontal axis is equal to $\theta$. Because dilation surfaces are modelled after the complex plane, we can define the same notion for dilation surfaces. 

\begin{defs}  A curve on a dilation surface is called a \textit{straight line in direction $\theta$} if its image by a chart in the complex plane is always a straight line in direction $\theta$. \end{defs}

The fact that these objects are well-defined follows from the fact that straight lines in direction $\theta$ in the complex plane remain straight lines in direction $\theta$ under the action of Aff$_{\mathbb{R}_{+}^*}(\mathbb{C})$, the structure group of dilation surfaces. It is important to note however that the notion of distance is not preserved with respect to the action of Aff$_{\mathbb{R}_{+}^*}(\mathbb{C})$. This is why in the following we define the directional foliation on a dilation surface and not the directional flow. Let $S$ be a dilation surface with atlas $\mathcal{A}$. Then the transition maps $\tau_{\alpha,\beta}$ are in Aff$_{\mathbb{R}_{+}^*}$. As these maps preserve horizontal lines, they are of the form 
 $$\tau_{\alpha,\beta}(x,y) = (h_1(x,y),h_2(y))$$
Hence $\mathcal{A}$ satisfies the definition of an atlas for a foliation (see Definition \ref{deffoliation}). The leaves of this atlas are straight lines in direction $\theta = 0$. 
\begin{defs} Let $S$ be a dilation surface with atlas $\mathcal{A}$. For any $\theta \in S^1$, for any chart $(U_i,\phi_i) \in \mathcal{A}$, we can define a new chart $(U_i,\tilde{\phi_i})$ via $\tilde{\phi_i} := r_{-\theta} \circ \phi_i$ where $r_{-\theta}$  is a rotation in direction $-\theta$. Then the atlas $\mathcal{\tilde{A}}$ given by those new charts defines a foliation on $S$ whose leaves are such that their image by a chart in $\mathcal{A}$ is always a straight line in direction $\theta$. We call the foliation given by $\mathcal{\tilde{A}}$ the \textit{directional foliation in direction $\theta$} on $S$ and denote it by $\mathcal{F}_{\theta}$. 
\end{defs}

For a given directional foliation, for any point $p \in S$, we denote by $l_p$ the \textit{leaf through $p$}. We have seen in the introduction how to draw the leaves of the directional foliation on a dilation surface that has a polygonal representation. 

\begin{figure}[h]
\centering
\includegraphics[width=0.55\textwidth]{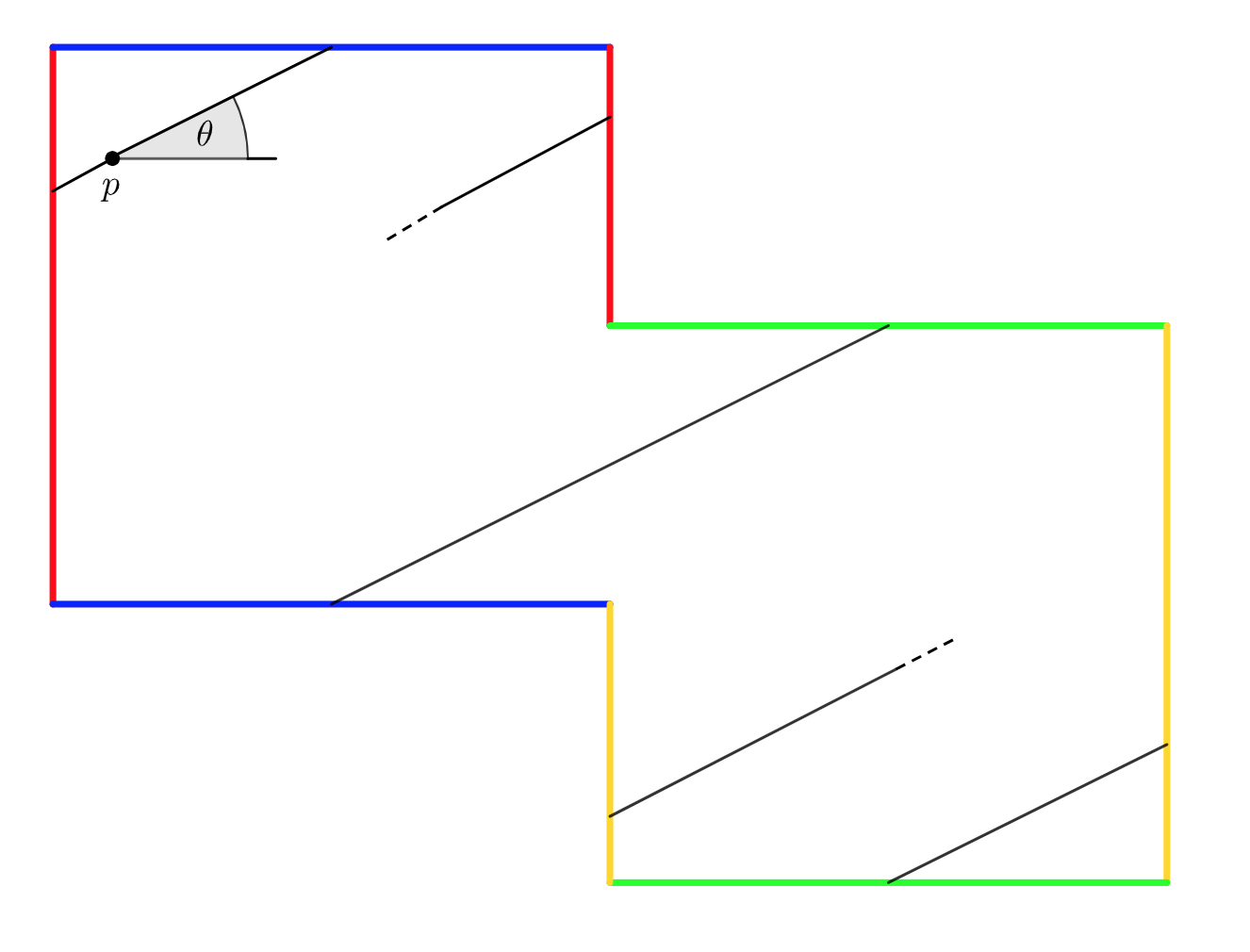}
\caption{The leaf $l_p$ for $\mathcal{F}_{\theta}$ on the two chamber surface.}
\end{figure} 

We note that the leaves of the directional foliation of a dilation surface consist of the same set of points in direction $\theta$ and $-\theta$. We can orient each leaf by denoting the direction $\theta$ as the \textit{forward direction} or the \textit{future} and the direction $-\theta$ as the \textit{backward direction} or the \textit{past}. This allows us to define a powerful tool we use to describe foliations called the \textit{first return map}. 

\begin{defs}\label{deffirstreturnmap} Let $\mathcal{F}_{\theta}$ be a directional foliation on a dilation surface $S$, let $\Sigma$ be transversal segment for the foliation. For any point $p \in \Sigma$, travel along the leaf through $p$ in the future direction, starting from the point $p$. If at some point we reach $\Sigma$ again, define $f(p)$ to be the first point of intersection between the leaf and $\Sigma$. This map extends to the \textit{first return map} $f: \Sigma \rightarrow \Sigma$, note that $f$ is not necessarily everywhere defined.
\end{defs}
\begin{figure}[h]
\centering
\includegraphics[width=0.28\textwidth]{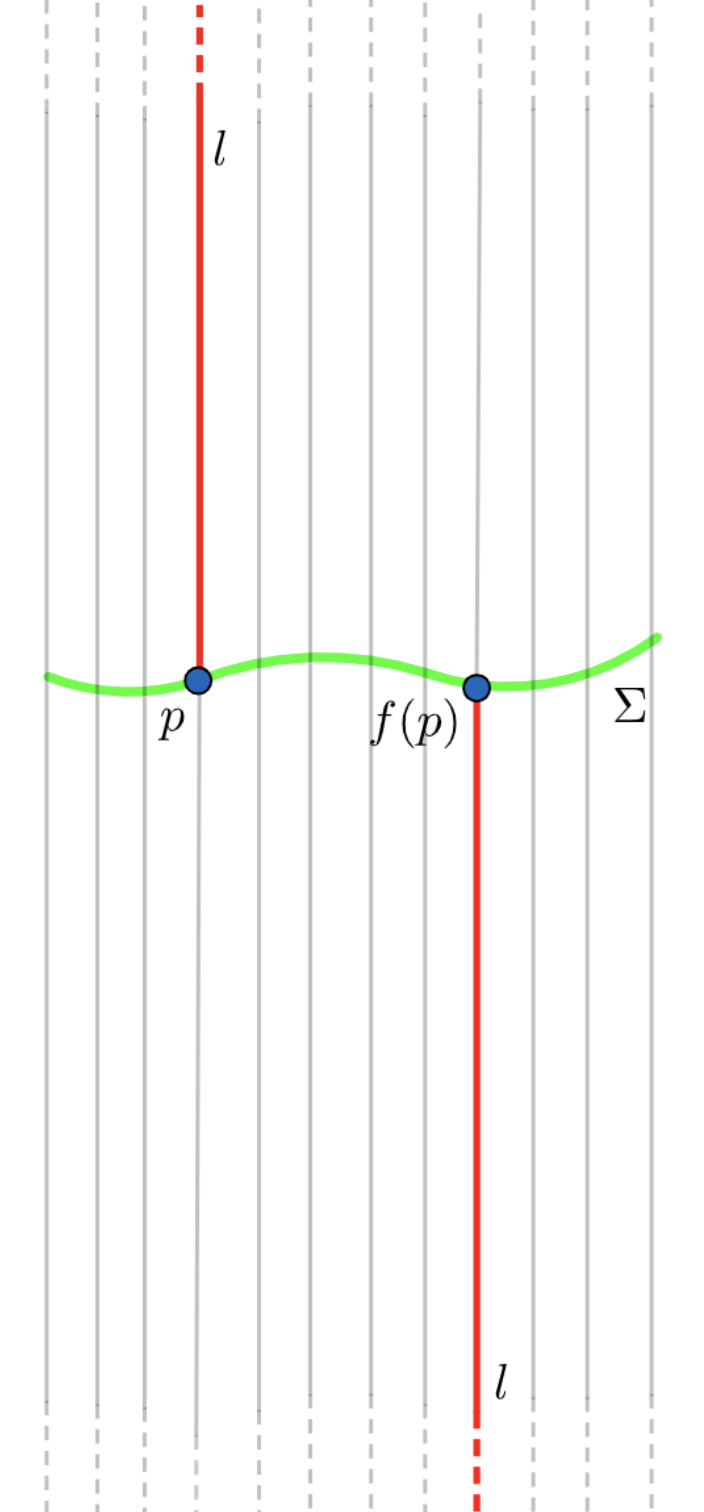}
\caption{An illustration of the first return map.}
\end{figure} 
\begin{rem} It follows from the next section that if we consider the first return map on a transversal segment on a dilation surface, when viewed in a chart that intersects this transversal segment, the first return map where defined is an affine interval exchange transformation. This is another key feature of dilation surfaces that has initiated their recent rise of popularity, as it means that the better we understand foliations on dilation surfaces, the better we also understand the dynamics of affine interval exchange maps. 
\end{rem}

\subsection{Linear holonomy} Due to the affine structure on a dilation surface, there might exist loops on the surface that have non-trivial \textit{linear holonomy}, meaning that the affine structure of the surface is on average either attracted towards the loop or repelled away from the loop. We now want to assign a number to each loop that measures this "amount" of attraction and repulsion. For this, let $\sigma: [0,1] \rightarrow S$ be a loop on a dilation surface $S$. Since $S$ is compact, we can cover $\sigma$ with finitely many charts $(U_i, \phi_i)_{i=0}^{k}$. We can choose these charts such that (\textasteriskcentered) there exists a partition of $[0,1]$ given by $0=t_0 < \dots < t_{k+1} = 1$ where $\sigma([t_i,t_{i+1}]) \subset U_i$ for all $0 \leq i \leq k$. 

\begin{defs}\label{def3.8} Let $\sigma$ be a loop on a dilation surface $S$, let $(U_i, \phi_i)_{i=0}^{k}$ be charts of the dilation atlas on $S$ that cover $\sigma$ such that they satisfy (\textasteriskcentered). Let the transition map between $(U_i, \phi_i)$ and $(U_{i+1}, \phi_{i+1})$  be of the form $z \rightarrow \lambda_i z + v_i$ for $0 \leq i \leq k-1$ and of the form $z \rightarrow \lambda_k z + v_k$ between $(U_k,\phi_k)$ and $(U_0, \phi_0)$, where $\lambda_i \in \mathbb{R}_{>0}$, $v_i \in \mathbb{C}$ for $0 \leq i \leq k$. We define $$\rho(\sigma) := \prod_{i=0}^{k} \lambda_i$$ to be the \textit{linear holonomy of $\sigma$}.
\end{defs}

\begin{figure}[h]
\centering
\includegraphics[width=0.5\textwidth]{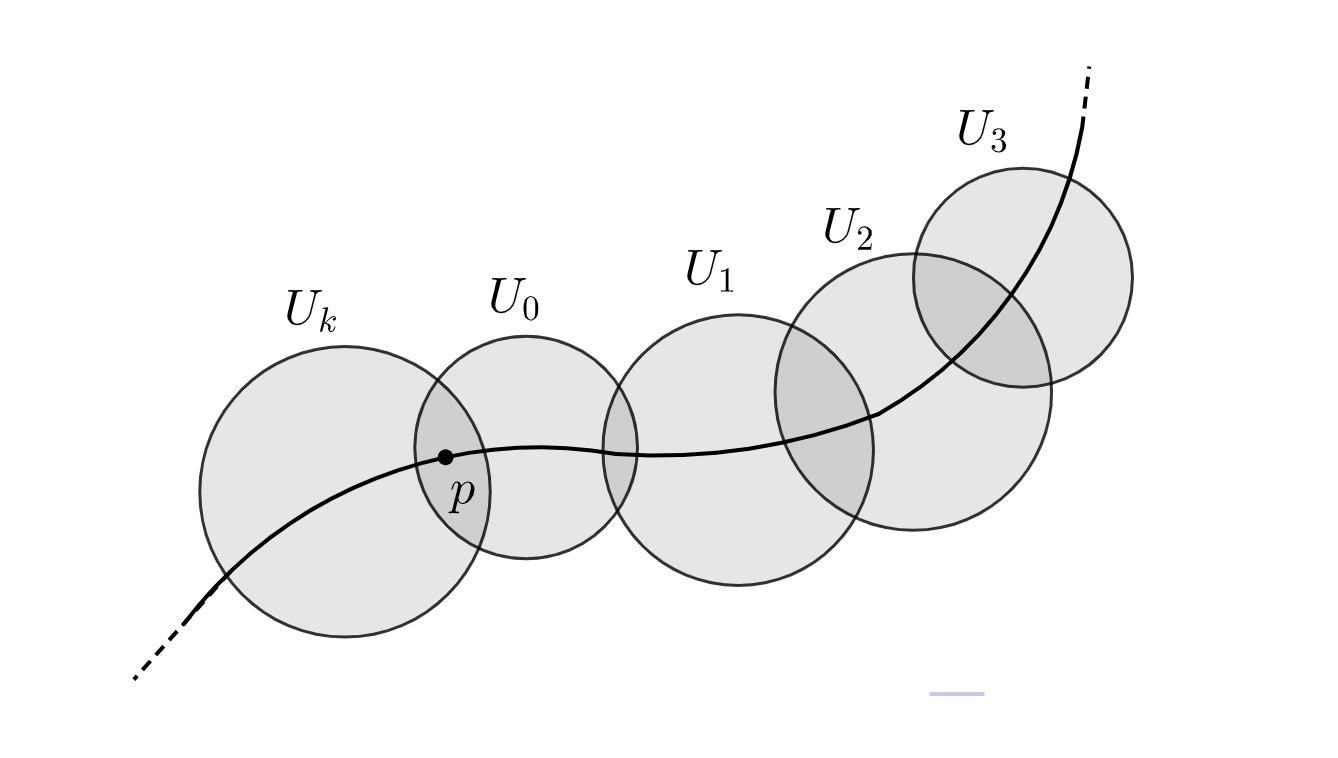}
\caption{A loop $\sigma$ covered with charts $(U_i, \phi_i)_{i=0}^{k}$, where $p = \sigma(0)$.}
\end{figure}

\begin{lem}\label{lem3.9} The linear holonomy $\rho(\sigma)$ of a loop $\sigma$ on a dilation surface $S$ does not depend on the point $\sigma(0)$. Furthermore, $\rho(\sigma)$ does not depend on the choice of charts $(U_i, \phi_i)_{i=0}^{k}$ as well as on the homotopy class of $\sigma$ based at $\sigma(0)$. 
\end{lem}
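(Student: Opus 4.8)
The plan is to repackage $\rho(\sigma)$ as the derivative of a holonomy germ and then deduce the three invariance statements from elementary properties of affine maps. Write $\tau_i\colon z\mapsto\lambda_i z+v_i$ for the transition map appearing in Definition~\ref{def3.8} (with $\tau_k$ the one from $(U_k,\phi_k)$ back to $(U_0,\phi_0)$), and set $T_\sigma:=\tau_k\circ\tau_{k-1}\circ\cdots\circ\tau_0$. On a sufficiently small neighbourhood of $\sigma(0)$ continued along $\sigma$ this composite is a well-defined germ of an element of $\mathrm{Aff}_{\mathbb{R}_{>0}}(\mathbb{C})$, and since the derivative of a composition of affine maps is the product of the derivatives, its linear part is exactly $\prod_{i=0}^{k}\lambda_i=\rho(\sigma)$. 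So it suffices to show that $T_\sigma$ is well defined up to changes that do not affect its linear part.

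First I would prove independence of the chosen chart chain. Any two chains of charts along $\sigma$ satisfying (\textasteriskcentered) admit a common refinement, obtained by adding finitely many charts of the dilation atlas and subdividing the partition of $[0,1]$; its existence follows from compactness of $\sigma([0,1])$ together with a Lebesgue-number argument applied to the cover of $\sigma$ by chart domains. It then suffices to check that the two refinement moves leave $T_\sigma$ unchanged: subdividing the partition without changing charts inserts only identity transition maps (with $\lambda=1$) and so changes nothing, while inserting a chart $(V,\psi)$ through which $\sigma$ passes replaces a single $\tau_i$ by the two maps $\psi\circ\phi_i^{-1}$ and $\phi_{i+1}\circ\psi^{-1}$, whose composite equals $\tau_i$ on the relevant triple overlap by the cocycle identity $\phi_{i+1}\circ\phi_i^{-1}=(\phi_{i+1}\circ\psi^{-1})\circ(\psi\circ\phi_i^{-1})$. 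Hence the composite germ, and in particular its linear part, is the same for any two admissible chains.

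Next, changing the initial chart $(U_0,\phi_0)$ to another chart $(U_0',\phi_0')$ around $\sigma(0)$, with transition $\psi\colon z\mapsto\mu z+w$, replaces $T_\sigma$ by the conjugate $\psi\circ T_\sigma\circ\psi^{-1}$; the linear parts being positive scalars, and scalars commuting, $\rho(\sigma)$ is unchanged. And reparametrising $\sigma$ so that it starts at another of its points amounts, after refining the chain so that this point is a partition point, to cyclically permuting the list $\tau_0,\dots,\tau_k$; since $(\mathbb{R}_{>0},\cdot)$ is abelian, $\prod_{i}\lambda_i$ is invariant under cyclic permutation. This gives independence of the base point and of the base chart. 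Finally, for homotopy invariance, let $H\colon[0,1]^2\to S$ be a homotopy of loops based at $\sigma(0)$, with $\gamma_s:=H(s,\cdot)$. Pulling back the chart domains gives an open cover of the compact square; choosing a grid finer than its Lebesgue number, each closed subrectangle maps into one chart domain, so for $s$ ranging over a single grid column one fixed ordered list of charts and partition points is admissible for every $\gamma_s$ at once. By the definition of $\rho$ this forces $\rho(\gamma_s)$ to be constant on each column, hence constant in $s$, so $\rho(\gamma_0)=\rho(\gamma_1)$.

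The main obstacle is the first step: making the common-refinement argument for two chart chains precise and verifying, via the cocycle condition, that no refinement move alters the composite germ $T_\sigma$. Once that is established, the remaining statements are formal, resting only on the commutativity of positive-real multiplication (cyclic invariance, conjugation by scalars) and a standard Lebesgue-number subdivision of the homotopy square.
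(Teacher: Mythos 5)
Your proof is correct, and it takes a genuinely more self-contained route than the paper does. The paper dispatches base-point independence with the same cyclic-permutation remark you use, but for the substantive part of the lemma (independence of the chart chain and of the based homotopy class) it gives no argument at all and simply refers to Theorem 1, p.~65 of the cited textbook of Camacho--Lins Neto. You instead carry out the standard holonomy argument in full: packaging $\rho(\sigma)$ as the linear part of the composite transition germ $T_\sigma$, reducing chart-independence to the two elementary moves (partition subdivision, which inserts identity transitions, and chart insertion/deletion, which preserves the composite by the cocycle identity $\phi_{i+1}\circ\phi_i^{-1}=(\phi_{i+1}\circ\psi^{-1})\circ(\psi\circ\phi_i^{-1})$), and obtaining homotopy invariance from a Lebesgue-number subdivision of the homotopy square. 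What this buys is a proof that isolates exactly which structure is used where: commutativity of $\mathbb{R}_{>0}$ for the base-point and base-chart changes, the cocycle identity for chart-independence, and connectedness for the column-wise constancy of $\rho(\gamma_s)$. One point to make explicit in a write-up: since the transition maps are only \emph{locally} restrictions of affine maps (the overlap $U_i\cap U_{i+1}$ need not be connected), the factor $\lambda_i$ depends on which component of the overlap the loop crosses; your germ formulation handles this for a single loop, and in the homotopy step the observation that $H([s_j,s_{j+1}]\times\{t_{i+1}\})$ is connected, hence lies in a single component of the overlap, is exactly what guarantees that the same branch of each transition map, and therefore literally the same $\lambda_i$, is used for every $\gamma_s$ in a given column.
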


\begin{proof} The fact that $\rho(\sigma)$ does not depend on $\sigma(0)$ follows directly from the fact that the product $\prod_{i=0}^{k} \lambda_i$ does not change when we rearrange the factors $\lambda_i$. For the proof of the second part of the statement we refer the reader to \cite{Camacho} (see Theorem 1, p. 65). \end{proof} 

\begin{prop} Let $S$ be a dilation surface and $\Sigma \subset S$ the set of singularities of $S$. Let $\mathcal{L}(S)$ denote the set of closed loops on $S \backslash \Sigma$. Then $$\sigma: \mathcal{L}(S) \rightarrow \mathbb{R}_{>0}$$ defines a map that is constant on the free homotopy classes of $\mathcal{L}$(S). \end{prop}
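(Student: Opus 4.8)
The plan is to upgrade Lemma~\ref{lem3.9}, which gives invariance of the linear holonomy $\rho$ under \emph{based} homotopy, to invariance under \emph{free} homotopy. The mechanism is the standard fact that two freely homotopic loops become conjugate, up to based homotopy, by the path that the basepoint traces out during the homotopy; since the linear holonomy takes values in the abelian group $\mathbb{R}_{>0}$, conjugation does nothing.

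First I would extend the definition of $\rho$ from loops to arbitrary paths $\gamma:[0,1]\to S\setminus\Sigma$: cover $\gamma$ by finitely many charts $(U_i,\phi_i)_{i=0}^{k}$ of the dilation atlas satisfying the analogue of condition~(\textasteriskcentered), let $z\mapsto\lambda_i z+v_i$ be the transition map between $(U_i,\phi_i)$ and $(U_{i+1},\phi_{i+1})$ for $0\le i\le k-1$, and set $\rho(\gamma):=\prod_{i=0}^{k-1}\lambda_i$. Exactly as in the proof of Lemma~\ref{lem3.9}, and by the reference to \cite{Camacho}, this number is independent of the chosen chart cover and depends only on the homotopy class of $\gamma$ relative to its endpoints; for a loop it recovers Definition~\ref{def3.8}. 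Two properties then follow directly from the construction: $\rho(\gamma_1 * \gamma_2)=\rho(\gamma_1)\,\rho(\gamma_2)$ whenever $\gamma_1$ ends where $\gamma_2$ begins (one simply juxtaposes the two chart covers), and $\rho(\bar\gamma)=\rho(\gamma)^{-1}$ for the reversed path $\bar\gamma(t)=\gamma(1-t)$ (each $\lambda_i$ is inverted and the order reversed).

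Next I would run the conjugation argument. Let $\sigma_0,\sigma_1\in\mathcal{L}(S)$ be freely homotopic via $H:[0,1]\times[0,1]\to S\setminus\Sigma$ with $H(0,\cdot)=\sigma_0$, $H(1,\cdot)=\sigma_1$, and let $\gamma(s):=H(s,0)=H(s,1)$ be the path from $p_0:=\sigma_0(0)$ to $p_1:=\sigma_1(0)$ traced by the basepoint, which lies in $S\setminus\Sigma$. Restricting $H$ to the boundary of the unit square shows, in the usual way, that $\sigma_1$ is homotopic rel $p_1$ to $\bar\gamma * \sigma_0 * \gamma$ inside $S\setminus\Sigma$. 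Combining multiplicativity with the based-homotopy invariance of Lemma~\ref{lem3.9} gives
\[
\rho(\sigma_1)=\rho(\bar\gamma)\,\rho(\sigma_0)\,\rho(\gamma)=\rho(\gamma)^{-1}\,\rho(\sigma_0)\,\rho(\gamma)=\rho(\sigma_0),
\]
the last equality because $\mathbb{R}_{>0}$ is abelian. Hence $\rho$ is constant on each free homotopy class of $\mathcal{L}(S)$, which is the assertion of the proposition.

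The only point that requires a little care is that everything must stay in $S\setminus\Sigma$: the chart covers used to define $\rho$ on paths automatically avoid $\Sigma$ since the dilation atlas is defined only there, and the free homotopy $H$ avoids $\Sigma$ precisely because $\sigma_0$ and $\sigma_1$ are assumed to represent the same free homotopy class of $\mathcal{L}(S)$. With that understood, the statement is a formal consequence of Lemma~\ref{lem3.9} and the commutativity of the linear holonomy group, so I do not anticipate any substantial obstacle.
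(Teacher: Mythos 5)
Your proof is correct and follows essentially the same route as the paper: both arguments conjugate one loop into the based homotopy class of the other via the path traced out by the basepoint during the free homotopy, and then kill the conjugation using the commutativity of $\mathbb{R}_{>0}$ (which the paper invokes through the basepoint-independence of $\rho$, i.e.\ cyclic rearrangement of the factors $\lambda_i$, rather than through an explicit extension of $\rho$ to open paths). Your extra step of defining $\rho$ on paths and proving multiplicativity and inversion under reversal is a harmless elaboration of the same idea, not a different argument.
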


\begin{proof}
    Let $\gamma$ be a loop in the same free homotopy class as $\sigma$. Let $\eta$ be a path from $\sigma(0)$ to $\gamma(0)$. Then $\tilde{\sigma} := \eta \circ \gamma \circ \eta ^{-1}$ is in the homotopy class of $\sigma$ based at $\sigma(0)$. From Lemma \ref{lem3.9} it follows that $\rho(\tilde{\sigma}) = \rho(\sigma)$. It further follows from Lemma \ref{lem3.9} that if $\tilde{\gamma}$ is a reparametrization of $\tilde{\sigma}$ such that $\tilde{\gamma}(0) = \gamma(0)$, then $\rho(\tilde{\gamma}) = \rho(\tilde{\sigma})$. Note that $\tilde{\gamma}$ is in the homotopy class of $\gamma$ based at $\gamma(0)$. Hence, by Lemma \ref{lem3.9}, $\rho(\gamma) = \rho(\tilde{\gamma})$. Overall we have $\rho(\gamma) = \rho(\tilde{\gamma}) = \rho(\tilde{\sigma}) = \rho(\sigma)$ and thus $\sigma: \mathcal{L}(S) \rightarrow \mathbb{R}_{>0}$ defines a map that is constant on the free homotopy classes of $\mathcal{L}(S)$.
\end{proof}
    
\begin{prop}\label{prop_lin_hol} Consider a dilation surface $S$ with directional foliation $\mathcal{F}_{\theta}$ such that there exists a closed leaf $l$ with linear holonomy $\rho(l)$. Let $p \in l$. Then there exists a segment $q_0$ that contains $p$ such that the first return map $f:q_0 \rightarrow q_0$, where defined, is of the form $z \rightarrow \rho(l) \cdot z $ when viewed in a chart that contains $q_0$. 
\end{prop}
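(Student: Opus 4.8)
The plan is to recognise that the first return map $f$ on a suitable short transversal at $p$ is nothing but the \emph{holonomy} of the foliation $\mathcal{F}_{\theta}$ along the closed leaf $l$, and then to compute that holonomy directly from the affine transition maps of the dilation atlas. Since a closed leaf is regular, $l$ is disjoint from the singular set $\Sigma$, and as $l$ is compact there is an open tube $N \supset l$ with $N \cap \Sigma = \emptyset$. I would first fix a dilation chart $(U_0,\psi_0)$ with $p \in U_0 \subset N$ and $\psi_0(p)=0$ (postcomposing by a translation, which lies in the structure group, to arrange $\psi_0(p)=0$), and pass to the adapted chart $\phi_0 := r_{-\theta}\circ\psi_0$ in which the leaves of $\mathcal{F}_{\theta}$ are horizontal; note the rotation $r_{-\theta}$ preserves the positive real linear parts of the affine transition maps, so $\rho$ is unchanged. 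Set $q_0 := \phi_0^{-1}\big(\{0\}\times(-\varepsilon,\varepsilon)\big)$, a transversal segment through $p$, with $\varepsilon$ to be shrunk below.

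Next I would compute the holonomy. Cover $l$, starting and ending at $p$, by finitely many adapted charts $(U_i,\phi_i)_{i=0}^{m}$ with $(U_m,\phi_m)=(U_0,\phi_0)$, arranged along $l$ as in Definition \ref{def3.8}, with consecutive transition maps $z\mapsto\lambda_i z+v_i$; by Definition \ref{def3.8} together with Lemma \ref{lem3.9} (independence of the charts) and the rotation remark above, $\prod_i\lambda_i=\rho(l)$. In each adapted chart a vertical transversal is parametrised by its $y$-coordinate, and since $\mathrm{Im}(\lambda_i z+v_i)=\lambda_i\,\mathrm{Im}(z)+\mathrm{Im}(v_i)$, each transition map carries this transverse coordinate by the affine map $y\mapsto\lambda_i y+\mathrm{Im}(v_i)$. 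Hence the holonomy of $\mathcal{F}_{\theta}$ along $l$, read on $q_0$ in the coordinate $y$, is the composition of these affine maps, so an affine map $y\mapsto\Lambda y+B$ with $\Lambda=\prod_i\lambda_i=\rho(l)$. Because $l$ is a closed leaf, traversing it once in the forward direction of $\mathcal{F}_{\theta}$ from $p$ returns exactly to $p$, so the holonomy fixes $y=0$; therefore $B=0$ and the holonomy is $y\mapsto\rho(l)\,y$, that is $z\mapsto\rho(l)\,z$ on $q_0$ in the chart $\phi_0$. Since scaling by $\rho(l)$ commutes with the rotation relating $\phi_0$ to $\psi_0$, the same formula $z\mapsto\rho(l)\,z$ holds in the dilation chart $\psi_0$ (which contains $q_0$), as required.

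Finally I would check that, after shrinking $\varepsilon$, the first return map $f$ of Definition \ref{deffirstreturnmap} on $q_0$ coincides, wherever it is defined, with this holonomy. By continuity of the foliation along the compact leaf $l$ (the holonomy germ along $l$ is defined on a neighbourhood of $p$ in the transversal, cf.\ \cite{Camacho}), for $\varepsilon$ small every leaf issuing forward from a point of $q_0$ stays inside the tube $N$ during one turn around $l$ and meets $q_0$ again; within $U_0$ each such leaf is a horizontal arc meeting the vertical segment $q_0$ in a single point, so no return to $q_0$ occurs before one full turn, and the first return is precisely the holonomy image $\rho(l)\cdot x$. Thus $f(x)=\rho(l)\,x$ in the chart $\psi_0$ for every $x\in q_0$ at which $f$ is defined (the case $\rho(l)=1$ just giving $f=\mathrm{id}$ there). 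I expect the main obstacle to be exactly this last identification: reconciling the two a priori different notions of ``holonomy'' — the scalar $\rho(l)$ coming from the dilation atlas in Definition \ref{def3.8} versus the return-map germ on a transversal — and making the ``one turn around $l$'' neighbourhood argument precise; once that bridge is in place, the determination of the linear part and the vanishing of the translation term are immediate.
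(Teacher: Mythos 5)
Your proposal is correct and follows essentially the same route as the paper's proof: cover the closed leaf by finitely many charts, carry a small transversal through $p$ once around the leaf, and observe that the linear parts of the successive transition maps multiply to $\rho(l)$ while the return to $p$ itself forces the resulting affine map to fix the origin. The paper phrases this as parallel transport of the segment with an iterative shrinking of $q_0$ rather than as a composition of affine holonomy maps on the transverse coordinate, but the content is the same, and your ``where defined'' caveat correctly covers the expanding case $\rho(l)>1$ in which the return lands outside $q_0$ for part of the segment.
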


\begin{proof} Let $\sigma: [0,1] \rightarrow S$ be a loop that is contained in a closed leaf of the directional foliation $\mathcal{F}_{\theta}$, where the orientation of $\sigma$ corresponds to the orientation of the leaf in the forward direction. Let $(U_i, \phi_i)_{i=0}^{k}$ be charts of the dilation atlas on $S$ that cover $\sigma$ such that (\textasteriskcentered) is satisfied (recall that (\textasteriskcentered) is explained just before Definition \ref{def3.8}). Consider the corresponding partition of $[0,1]$ given by $0=t_0 < \dots < t_{k+1}=1$. Let $q_0$ be a straight segment on $S$ through $\sigma(t_0)$ that is orthogonal to the direction $\theta$. Then $\phi_0(q_0)$ is a straight segment orthogonal to the direction $\theta$ in the complex plane. In the chart $\phi_0(U_0) \subset \mathbb{C}$, we can move the segment $\phi_0(q_0)$ through parallel transport in direction $\theta$ until the segment reaches the point $\phi_0(\sigma(t_1))$.
\begin{figure}[h]
\centering
\includegraphics[width=0.9\textwidth]{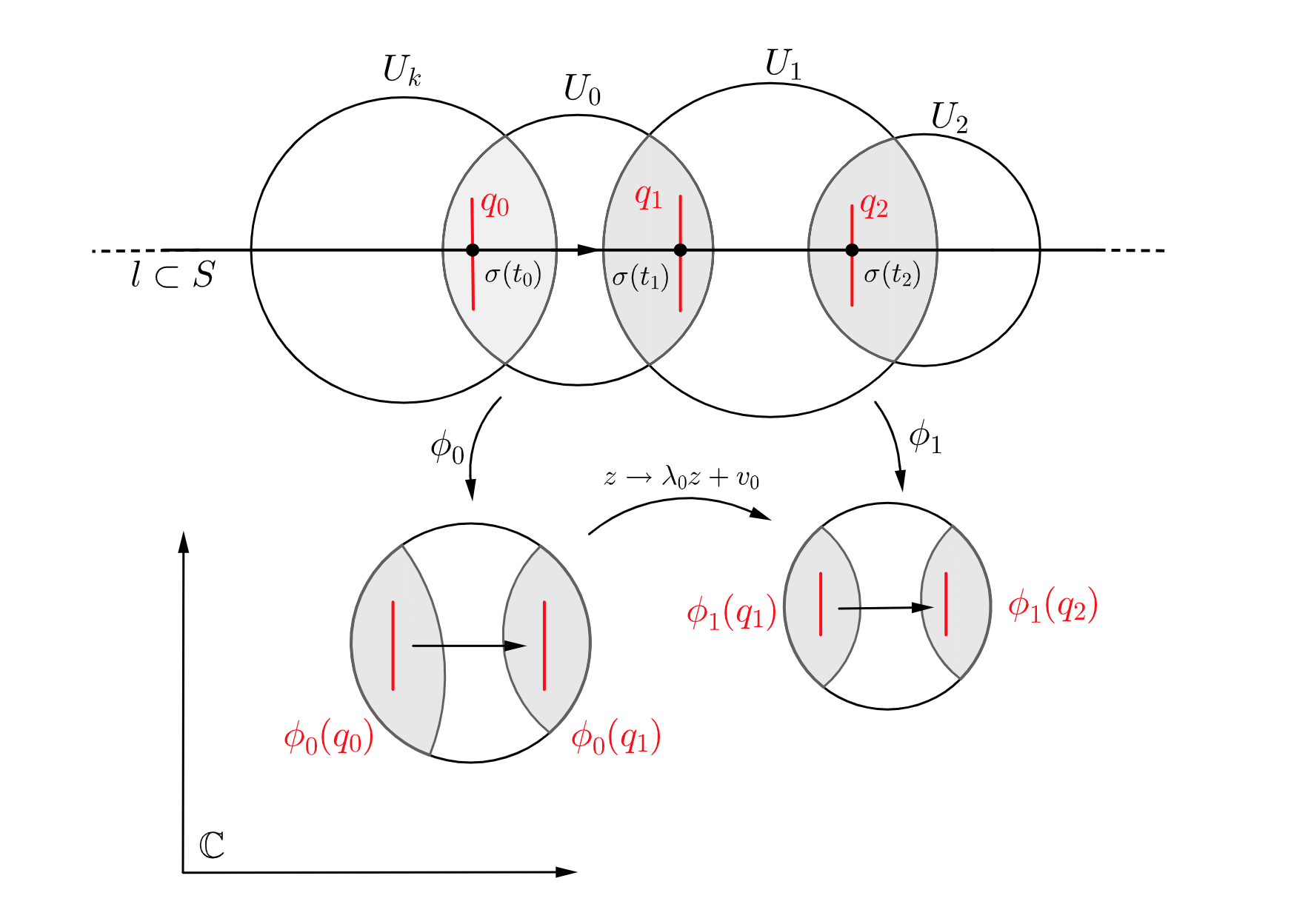}
\caption{}
\end{figure}

Let $q_1$ on the surface $S$ be such that $\phi_0(q_1)$ is equal to $\phi_0(q_0)$ after parallel transport. Note that as the length of $\phi_0(q_0)$ is equal to the length of $\phi_0(q_1)$, we can choose $q_0$ such that the length of $\phi_0(q_0)$, and thus the length of $\phi_0(q_1)$, is small enough for $q_1$ to be entirely contained in $U_0 \cap U_1$. We can further choose $q_0$ even smaller so that $\phi_0(q_0)$ is entirely contained in $\phi_0(U_0)$ at any time during parallel transport (where w.l.o.g we might have to replace $U_0$ with finitely many charts whose union contains $U_0$). 

Now when we transition from $U_0$ to $U_1$, the segment $\phi_0(q_1)$ is translated and dilated by a factor $\lambda_0 \in \mathbb{R}$ so that it becomes $\phi_1(q_1)$. We now move $\phi_1(q_1)$ in $\phi_1(U_1)$ along parallel transport in direction $\theta$ until we reach the point $\phi_1(\sigma(t_2))$ and we let $q_2$ on the surface be such that $\phi_1(q_2)$ is equal to $\phi_1(q_1)$ after parallel transport. Now the length of $\phi_1(q_1)$ is equal to $\lambda_0$ multiplied times the length of $\phi_0(q_0)$. This means that we can further choose $q_0$ such that the length of $\phi_0(q_0)$ multiplied times $\lambda_0$ is small enough so that $q_2 \in U_1 \cap U_2$. We can chose $q_0$ even smaller so that $\phi_1(q_1)$ is contained in $\phi_1(U_1)$ at any time during parallel transport. Note that we can repeat this procedure finitely many times, each time choosing $q_0$ small enough such that $\phi_{i}(q_{i})$ is contained in $\phi_{i}(U_{i})$ at any time during parallel transport and that $q_{i+1} \in U_{i} \cap U_{i+1}$ for $0\leq i \leq k-1$. As $\sigma$ is covered by finitely many charts $U_0, \dots, U_k$, we can thus eventually find $q_0$ such that $q_{k+1}$ is entirely contained in $U_{k} \cap U_0$ and the segment $\phi_0(q_0)$ is always contained in a chart during parallel transport of $\phi_0(q_0)$ along the image of $\sigma$ in the charts. In fact, $\phi_0(q_{k+1})$ and $\phi_0(q_{0})$ both are straight segments orthogonal to the direction $\theta$, where either $\phi_0(q_{k+1}) \subset \phi_0(q_{0})$ or $\phi_0(q_{0}) \subset \phi_0(q_{k+1}) $, and they differ from each other by a dilation of factor $\rho(\sigma) = \prod_{i=0}^{k} \lambda_i$.

\begin{figure}[h]
\centering
\includegraphics[width=0.5\textwidth]{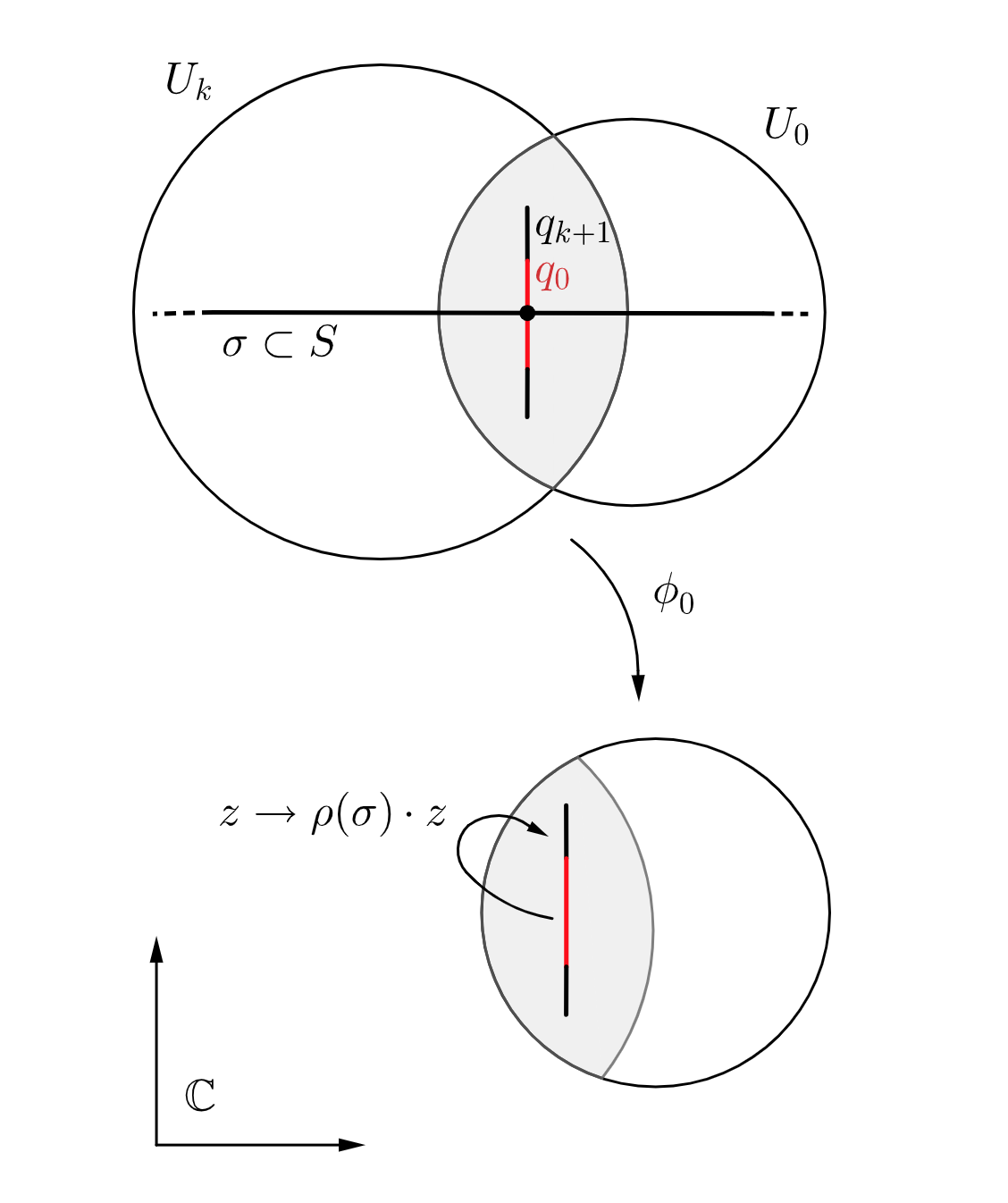}
\caption{}
\end{figure}

Note that the parallel transport of the segment $\phi_0(q_0)$ along the image of $\sigma$ in the charts corresponds exactly to moving $q_0$ along the foliation in direction $\theta$ on the surface. Consider hence the first return map $f: q_0 \rightarrow q_0$. If $q_{k+1} \subset q_0$, then the first return map $f$ is defined on all of $q_0$. If $q_0 \subset q_{k+1}$, then $f$ is only defined on a subinterval of $q_0$ that contains $\sigma(t_0)$. In both cases, $f$ is of the form $z \rightarrow \rho(\sigma) \cdot z$ when viewed in a chart that contains $q_0$. \end{proof}

Proposition \ref{prop_lin_hol} tells us that the linear holonomy of a closed leaf $l$ on a dilation surface, as we defined it using the transition maps between charts that cover $l$, really describes the local affine structure of the surface around the leaf. This means that if we begin to move a segment in forward direction along $l$ starting at a point $p \in l$, then by the time we reach $p$ again, the segment has been contracted or dilated by factor $\lambda$ with respect to any chart at $p$. This is equivalent to saying that the the leaves of the directional foliation $\mathcal{F}_{\theta}$ near $l$ are either "attracted" or "repelled" by $l$. We will discuss this behaviour more formally in the next chapter when we introduce \textit{flat} and \textit{affine cylinders}. 

The discussion of the linear holonomy concludes this chapter and we have hence mentioned all of the basic properties of dilation surfaces that will be important for us. In the next chapter, we will see an in-depth description of the possible types of behaviours for directional foliations on dilation surfaces.

\section{Different Types of Recurrence on Dilation Surfaces} 

In this chapter, we give explicit examples of directional foliations on dilation surfaces. We state the definition of a \textit{recurrent leaf} and we consider four different types of recurrent behaviour, starting with foliations where the leaves are trivially recurrent and concluding with foliations where the leaves are non-trivially recurrent. In our fourth example, we discuss in detail the \textit{Cantor-like} behaviour that arises on the Disco surface for some directions of the foliation. Our main theorem, proved in Chapter 6, will then assert that the types of behaviour we see in this chapter are already all possible types of behaviours for the directional foliation on a dilation surface. 

\subsection{Recurrence} Loosely speaking, a leaf is recurrent if it keeps coming back to any neighbourhood of any point that lies on it. To study the recurrent behaviour of leaves, we want to differentiate between leaves that are \textit{trivially recurrent} and leaves that are \textit{non-trivially recurrent}. For this, we first define the notion of a limit set for the forward and backward direction of a leaf.
\begin{defs} Let $S$ be a dilation surface with directional foliation $\mathcal{F}_{\theta}$. Let $p \in S$ and $l_p$ be the leaf through $p$. We denote by $l_p^+$ the \textit{forward half-leaf} consisting of all points reached starting from $p$ and travelling in the forward direction along $l_p$. We denote by $l_p^-$ the \textit{backward half-leaf} consisting of all points reached starting from $p$ and travelling in the backward direction along $l_p$.
\end{defs}
\begin{defs}
     The \textit{$\omega-$ limit set} of $p$ is the limit set of $l_p^+$ and we denote it by $L_{\omega}(p)$. The \textit{$\alpha-$ limit set} of $p$ is the limit set of $l_p^-$ and we denote it by $L_{\alpha}(p)$. 
\end{defs}
(For the definition of a limit set see Definition \ref{deflimitset}). The $\omega$-limit set of a point $p$ is hence simply the set of points that the leaf through $p$ accumulates to in the future, its $\alpha$-limit set is the set of points that the leaf through $p$ accumulates to in the past. In line with \cite{gardiner}, we define a leaf that contains its own limit set both in the future and in the past to be a \textit{recurrent leaf}. More formally,

\begin{defs}  A point $p \in M$ is said to be \textit{$\omega$-recurrent} if $p$ lies in $L_{\omega}(p)$, \textit{$\alpha$-recurrent} if $p$ lies in $L_{\alpha}(p)$ and \textit{recurrent} if $p$ is both $\omega$- and $\alpha$-recurrent.
\end{defs}

Note that if $p$ is recurrent, then any point of the leaf $l_p$ through $p$ will also be recurrent since limit sets are invariant. Hence, recurrence is a property of a whole leaf, not only a single point. A recurrent leaf is thus a leaf that intersects any neighbourhood of any point that lies on it, both in the future and in the past. There are two types of recurrent leaves:

\begin{defs} A recurrent leaf that is closed is called \textit{trivially recurrent}. All other recurrent leaves are called \textit{non-trivially recurrent}. The topological closure of a non-trivially recurrent leaf is called a \textit{non-trivially recurrent leaf closure}.
\end{defs}

In the remainder of this chapter, we give two examples of trivially recurrent behaviour and two examples of non-trivially recurrent behaviour for directional foliations on dilation surfaces. For trivially recurrent behaviour, we introduce \textit{flat} and \textit{affine cylinders}. For non-trivially recurrent behaviour, we introduce \textit{minimal} and \textit{Cantor-like subsurfaces}. We give an overview of all four cases below. \\

\begin{center}
\includegraphics[width=0.93\textwidth]{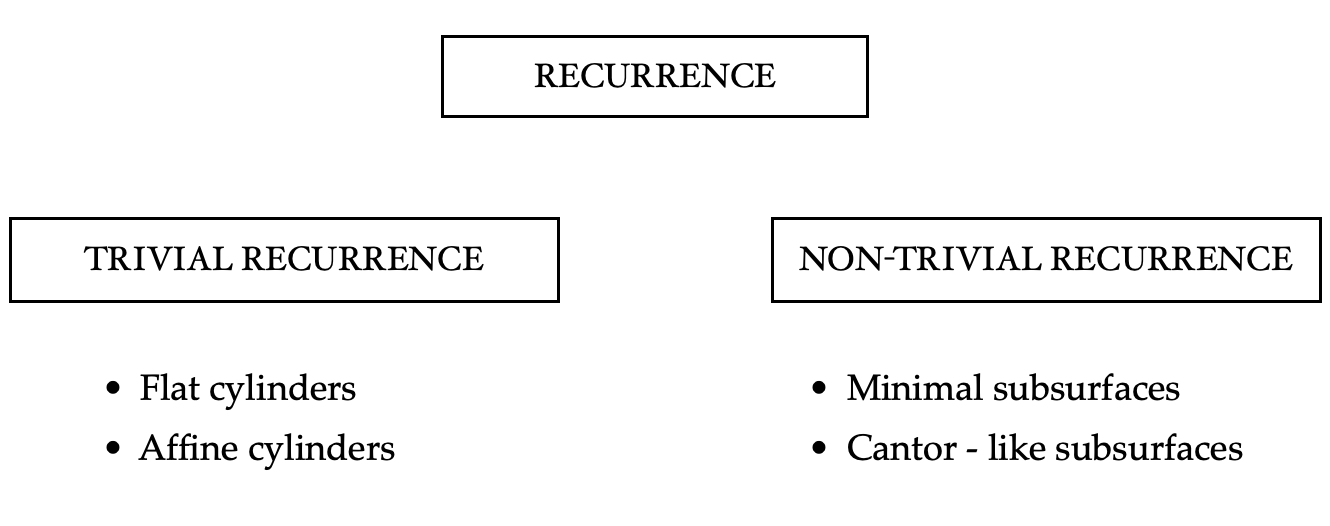}
\end{center}

\subsection{Trivial recurrence} Trivial recurrence on dilation surfaces can be split in two cases: the case where the trivially recurrent leaf is contained in a flat cylinder and the case where the trivially recurrent leaf is contained in an affine cylinder. We first consider the case of flat cylinders. 

%It is worth to note that affine cylinders are structures that are only contained in dilation surfaces and not in translation surfaces, thus the "hyperbolic behaviour" they induce can only occur for the directional foliation on dilation surfaces that are not translation surfaces.  
 
\begin{defs} Let $\theta \in S^1$. A \textit{flat cylinder} $C_{\theta}$ is the structure we receive when glueing two parallel, distinct lines of the same length in the plane that are orthogonal to the direction $\theta$ using a translation only.\end{defs}
\begin{figure}[h]
\centering
\includegraphics[width=0.6\textwidth]{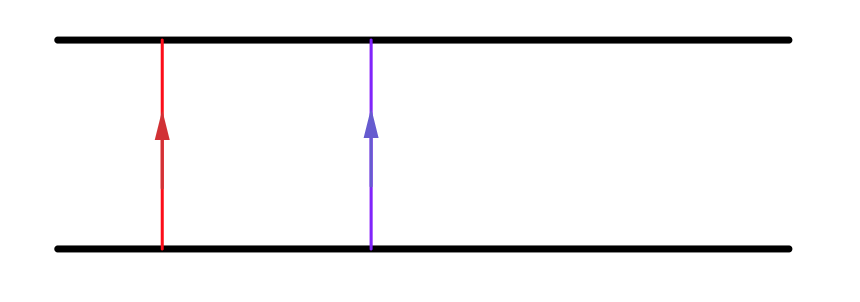}
\caption{A flat cylinder and two vertical closed leaves.}
\end{figure}
Because the sides of the cylinder are glued with translations only, any leaf of the directional foliation $\mathcal{F}_{\theta}$ on $C_{\theta}$ is closed for $\theta \in S^1$. Note that the linear holonomy of these closed leaves is equal to zero. We say that a dilation surface \textit{contains a flat cylinder} if there exists an embedding of this cylinder into the surface that preserves the affine structure. If the surface only consists of flat cylinders whose sides are all parallel, we call the corresponding foliation \textit{completely periodic}:

\begin{defs} We say that a foliation $\mathcal{F}_{\theta}$ on a dilation surface is \textit{completely periodic} if the surface can be completely decomposed into flat cylinders $C_{\theta}^1, \dots C_{\theta}^n$. \end{defs}

We now move from flat cylinders to affine cylinders. While flat cylinders can only be contained in dilation surfaces that are translation surfaces, affine cylinders can only be found on dilation surfaces that are not translation surfaces. Affine cylinders considerably enrich the dynamics of the directional foliation on dilation surfaces, as the closed leaves contained in them have non-trivial linear holonomy and hence act as attractors or repellers. 

\begin{defs} An \textit{affine cylinder} $C_{\alpha,\rho}$ is the dilation structure we receive when gluing an angular sector of the complex plane of angle $\alpha$ along the arcs of two concentric circles using the map $z \rightarrow \rho z + b \in \hspace{1mm}$Aff$_{\mathbb{R}_{+}^*}(\mathbb{C})$, where $\rho > 1$. \end{defs}
The construction is illustrated in Figure 18. Again we say that a dilation surface \textit{contains an affine cylinder} if there exists an embedding of an affine cylinder into the surface that preserves the affine structure.

\begin{figure}[h]
\centering
\includegraphics[width=0.7\textwidth]{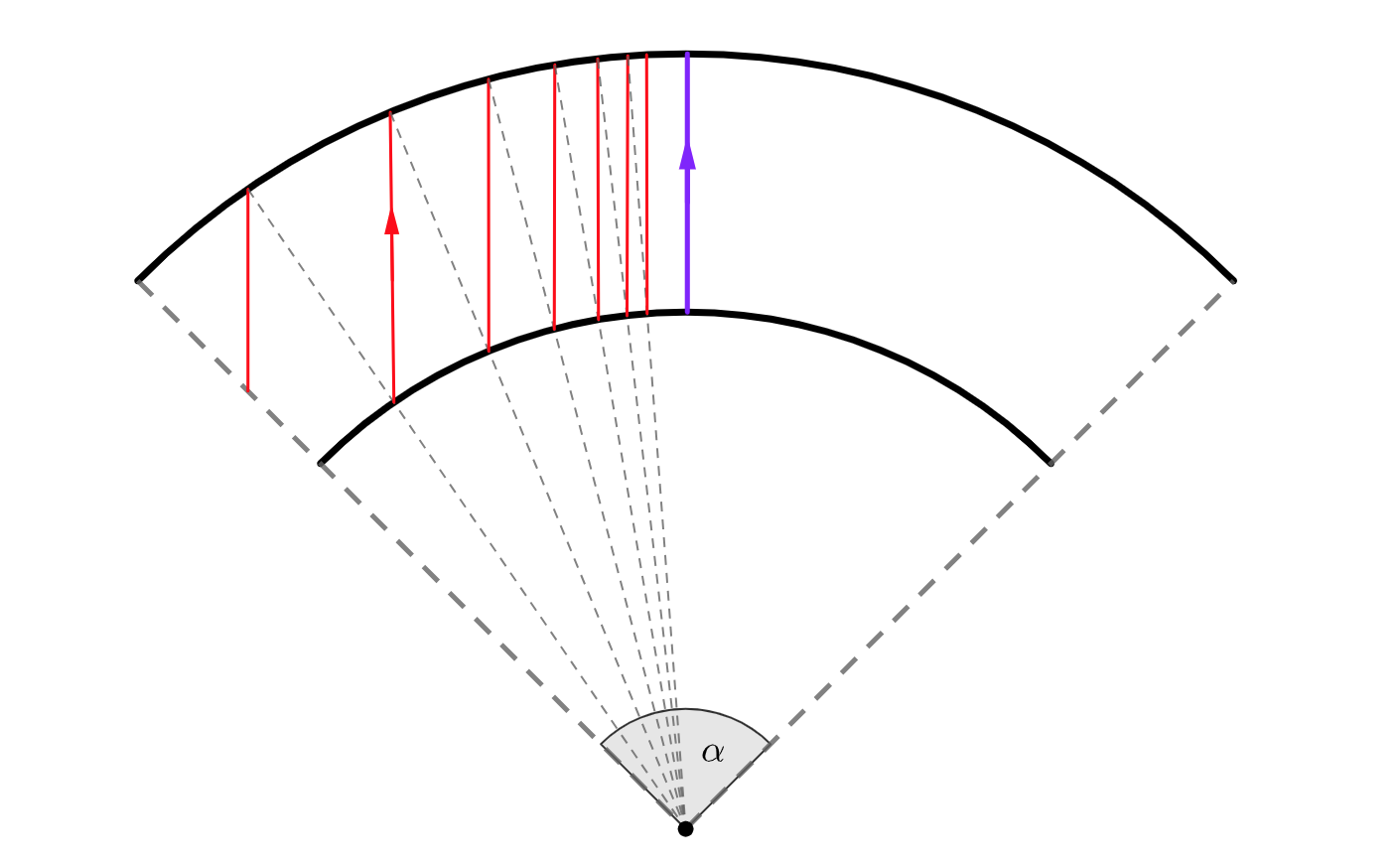}
\caption{Two leaves of the vertical foliation on an affine cylinder of angle $\alpha$.}
\end{figure}

Consider a leaf of a directional foliation $\mathcal{F}_{\theta}$ entering an affine cylinder $C_{\alpha,\rho}$ where $\theta$ lies in the angular sector covered by the cylinder. Then this leaf will be trapped inside the cylinder. Indeed, there exists a closed leaf inside the affine cylinder - simply draw a straight line in the given direction starting from the origin, then the line will project to a closed leaf on the affine cylinder (the closed leaf for the vertical foliation in Figure 18 is drawn in lilac). Note that as the edges of the cylinder are glued with a dilation, this leaf has non-trivial linear holonomy. Now fix a point $p$ on this closed leaf and consider a small transversal segment $l$ passing through $p$. The first return map $f$ on this segment is equal to the division by $\rho$ and hence a contraction whose fixed point is exactly $p$. Hence, the $\omega$-limit set of any leaf that intersects $l$ is equal to the closed leaf. As we can extend the segment $l$ to a cross-section of the whole affine cylinder, this is true for any leaf entering the cylinder. We call this closed leaf an \textit{attracting leaf}.
Note that if we had considered the foliation $\mathcal{F}_{-\theta}$, then the $\alpha$-limit set of any leaf in the cylinder would be equal to the same closed leaf, hence in this case the closed leaf would be called a \textit{repelling leaf}. There is a special name for directional foliations that only show this type of attracting or repelling behaviour:

\begin{defs} A directional foliation is called \textit{Morse-Smale} if there exist a finite number of closed leaves and the $\omega$-and $\alpha$-limit set of every regular leaf is a closed leaf. \end{defs}

In fact, this behaviour is believed to be almost always the case for a generic direction on a dilation surface that is not a translation surface. So far, no counterexample has been found to the following conjecture:

\begin{conj}[S. Ghazouani, \cite{ghazouani}]\label{selimconjecture}
For any dilation surface $S$ which is not a translation surface, for a full measure set of directions in $S^1$, the directional foliation on $S$ is Morse-Smale.
\end{conj}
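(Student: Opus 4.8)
Since Conjecture \ref{selimconjecture} is a well-known open problem, I do not prove it here but indicate the strategy by which one would expect to attack it, and where the real difficulty lies. The plan is to use \emph{renormalization}, in the spirit of the Rauzy--Veech--Zorich theory for interval exchange transformations. By the suspension construction of the introduction, the conjecture is equivalent to the statement that a Lebesgue-generic AIET is Morse-Smale, so it suffices to study the induction on the parameter space of AIETs (equivalently, the induction on directions of a fixed dilation surface). To an AIET $T$ one associates the first return map of $T$ to a suitably shrunk subinterval chosen by the usual winner/loser rule; this produces a new AIET $\mathcal{R}(T)$ together with an integer matrix recording the transformation of the length data and, crucially, the multiplicative transformation of the slopes $\lambda_i$. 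One thus obtains a skew product over the renormalization map $\mathcal{R}$ whose fibre cocycle --- call it the \emph{distortion cocycle} --- tracks the vector $(\log\lambda_1,\dots,\log\lambda_m)$.

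The proof would then proceed in three steps. First, fix the natural measure class, namely the Lebesgue class on the simplex of length vectors in each Rauzy class; unlike in the translation case there is no $\mathrm{SL}(2,\mathbb{R})$ action and the $\mathcal{R}$-invariant measure is only $\sigma$-finite, so one must show that $\mathcal{R}$ is recurrent to a compact part of parameter space for a.e.\ starting direction --- a Keane-type a.e.-minimality statement together with Poincar\'e recurrence for a suitably induced finite-measure system. Second, and decisively, prove that the distortion cocycle has \emph{strictly negative} top Lyapunov exponent along a.e.\ orbit. Third, translate this back to geometry: a negative top exponent produces, via Oseledets, an exponentially contracting direction for the first return map on a transversal, which by Proposition \ref{prop_lin_hol} is exactly the statement that one meets a closed leaf of linear holonomy $\neq 1$, i.e.\ an affine cylinder. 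Feeding this into the decomposition of Theorem \ref{maintheorem}, one rules out the minimal pieces (case (3)) and the Cantor-like pieces (case (4)) on a full-measure set of directions, while the flat-cylinder case (1) is excluded because a dilation surface which is not a translation surface has holonomy-free directions only on a null set; what survives is case (2), Morse-Smale.

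The main obstacle is the second step. The distortion cocycle is non-symplectic, and the absence of a finite $\mathcal{R}$-invariant measure and of an $\mathrm{SL}(2,\mathbb{R})$-renormalization means that the tools used to control the Kontsevich--Zorich exponents (the work of Forni, the Avila--Viana simplicity criterion) do not transfer directly; one would need either an explicit combinatorial lower bound showing that contracting Rauzy moves occur with positive asymptotic frequency for a.e.\ direction, or a new invariance principle adapted to the affine setting --- and it is precisely here that the problem has so far resisted. For this reason the statement is recorded here as a conjecture; the remainder of this thesis establishes the structural decomposition of Theorem \ref{maintheorem} unconditionally, and the detailed analysis of the Disco surface in Chapter 4 exhibits explicitly the measure-zero set of exceptional (Cantor-like) directions that the conjecture predicts to be negligible (see also \cite{cascades}), and can be read as a verification of the conjecture in that example.
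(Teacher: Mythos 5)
This statement is an open conjecture due to Ghazouani; the paper does not prove it and neither do you, so there is nothing to compare at the level of proof. Your write-up correctly identifies it as open and your renormalization sketch (recurrence of the induction, negativity of the distortion cocycle's top exponent, translation back to affine cylinders via Proposition \ref{prop_lin_hol}) is a reasonable account of where the difficulty lies, but it is a research programme, not an argument --- which is exactly the status the statement has in the thesis as well.
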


An example of a dilation surface with a directional foliation that is Morse-Smale is the two-chamber surface, drawn in Figure 19. Consider the two affine cylinders embedded in the surface that are obtained by glueing the red and the yellow edges (note that if $\alpha < \pi$, we can also glue an angular sector of the complex plane of angle $\alpha$ along two straight lines instead of concentric circles to obtain an affine cylinder).
\begin{figure}[h]
\centering
\includegraphics[width=0.55\textwidth]{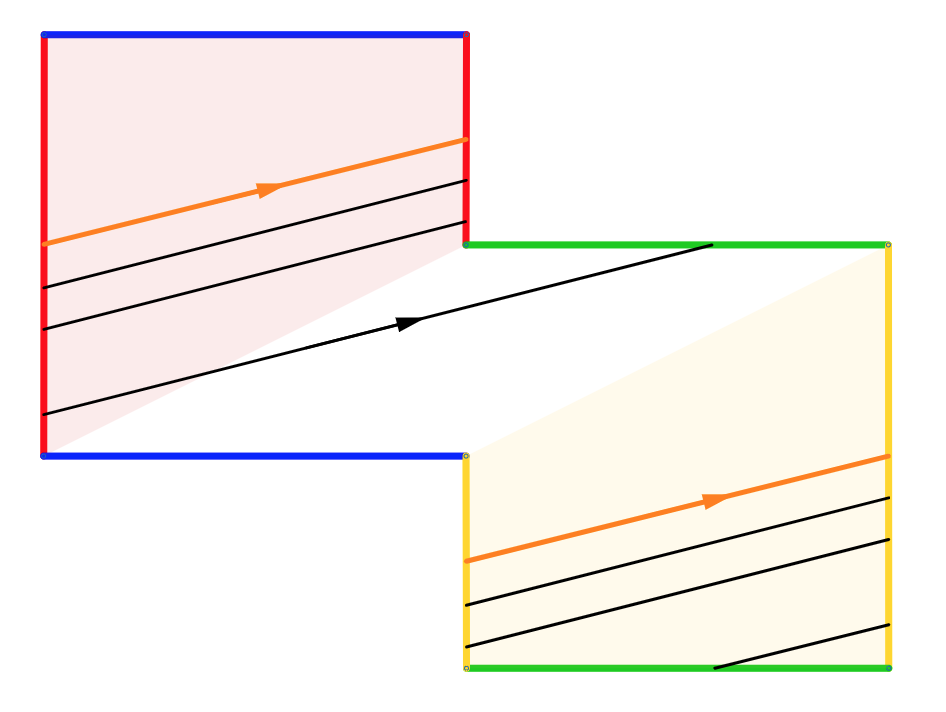}
\caption{Leaves of the foliation in direction $\theta = $arctan$(\frac{1}{4})$.}
\end{figure}
For any direction that lies in the angular sector of the red cylinder, there exist two closed leaves, one of them in the red cylinder, the other one in the yellow cylinder. Since the yellow cylinder is simply the red cylinder rotated by 180°, the linear holonomy of their closed orbits is exactly reciprocal to each other. Hence, any regular leaf of the foliation is attracted by one and repelled by the other leaf, meaning that the foliation is Morse-Smale.

\begin{rem} The directional foliation on this surface can also be completely periodic for $\theta = \pi/2$, or it can even exhibit \textit{Cantor-like} behaviour, explained more precisely after the next section. For a detailed and well-written study of the possible dynamical behaviour of the two-chamber surface, or more precisely of \textit{dilation tori with boundary} out of which we can construct the two-chamber surface, we refer the reader to \cite{dilationtori}. 
\end{rem}

Note that the dynamics of the directional foliation on affine cylinders is trivially recurrent, since the only recurrent leaves are closed leaves. Thus, together with the case of flat cylinders, we have now seen two different examples of trivially recurrent behaviour on dilation surfaces. We want to conclude this section by showing that in fact any closed leaf on a dilation surface is contained in either a flat or affine cylinder. 

\begin{prop}
    Let $l$ be a closed leaf on a dilation surface with directional foliation $\mathcal{F}_{\theta}$. If $\rho(l)=1$, then $l$ is contained in a flat cylinder, if $\rho(l) \neq 1$, then $l$ is contained in an affine cylinder of the surface. 
\end{prop}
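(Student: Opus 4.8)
The plan is to read Proposition~\ref{prop_lin_hol} through the developing map of the dilation structure. First I would fix a point $p\in l$ and take the transversal segment $q_0\ni p$ produced by Proposition~\ref{prop_lin_hol}, so that in a suitable chart $\phi_0$ with $\phi_0(p)=0$ the first return map $f\colon q_0\to q_0$ is, where defined, the linear map $z\mapsto\rho(l)\,z$. Since $l$ is a closed regular leaf it is an embedded simple closed curve in $S\setminus\Sigma$, and since the transition maps $z\mapsto\lambda z+v$ are orientation preserving the surface is orientable, so $l$ has an embedded annular neighbourhood; after shrinking $q_0$, the union $N$ of the leaf segments emanating forward from $q_0$ and running until their first return to $q_0$ is such a neighbourhood. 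Developing the dilation structure on $N$ (i.e.\ on its universal cover) produces an open set $\widetilde N\subset\mathbb{C}$ on which $\pi_1(N)=\mathbb{Z}$ acts by the monodromy $A\in\mathrm{Aff}_{\mathbb{R}_{+}^{*}}(\mathbb{C})$ of $l$, whose linear part is $\rho(l)$ by Definition~\ref{def3.8}; the developed image of $l$ is, being a leaf of $\mathcal F_\theta$, a straight segment in direction $\theta$ joining $\widetilde p:=\phi_0(p)$ to $A^{\pm1}(\widetilde p)$. The content of Proposition~\ref{prop_lin_hol} is that this developing map is injective, so $\widetilde N/\langle A\rangle$ is an embedded subsurface of $S$ containing $l$, and it remains to identify it in each case.

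If $\rho(l)=1$, then $A$ is a translation $z\mapsto z+v$, and since the developed leaf is a straight segment in direction $\theta$ from $\widetilde p$ to $\widetilde p+v$, the vector $v$ is parallel to $\theta$. The first return map on $q_0$ is then the identity, so every leaf meeting $q_0$ is closed, and via the developing map $N$ is identified with the planar strip bounded by the parallel segments $\widetilde q_0$ and $\widetilde q_0+v$ glued by the translation $z\mapsto z+v$. As $q_0$ has positive length and $v\parallel\theta$ is transverse to it, this is exactly a flat cylinder $C_\theta$ embedded in $S$ and containing $l$ (and if one wishes, enlarging $q_0$ to the maximal transversal on which the return map is the identity produces the maximal such cylinder).

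If $\rho(l)\neq1$, then after possibly replacing $\theta$ by $-\theta$ (which leaves the leaves of the foliation unchanged and inverts $\rho(l)$) I may assume $\rho(l)>1$. The affine map $A$, having linear part $\neq1$, has a unique fixed point, and composing $\phi_0$ with the translation taking it to $0$ I may assume $A(z)=\rho(l)\,z$. Then the developed leaf $\widetilde l$ is a straight segment in direction $\theta$ whose endpoints $\widetilde p$ and $\rho(l)^{\pm1}\widetilde p$ are positive scalar multiples of one another, so $\widetilde l$ lies on the ray $\mathbb{R}_{>0}\widetilde p$ and avoids $0$; rescaling I may take $|\widetilde p|=1$, so that $\widetilde l$ sweeps out one fundamental domain for $z\mapsto\rho(l)z$. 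Inside the $\langle A\rangle$-invariant open set $\widetilde N$ I can then find a full infinite angular sector $\mathcal S=\{re^{i\psi}:r>0,\ |\psi-\arg\widetilde p|<\alpha/2\}$ of some angle $\alpha>0$ with apex $0$ (its $A$-invariance forces this, starting from a small round disk around $\widetilde p$ and iterating $A^{\pm1}$). The quotient $\mathcal S/\langle A\rangle$, i.e.\ the sector $\{re^{i\psi}:1\le r<\rho(l),\ |\psi-\arg\widetilde p|<\alpha/2\}$ with the two bounding arcs glued by $z\mapsto\rho(l)z$, is by definition the affine cylinder $C_{\alpha,\rho(l)}$, and its image under the (injective) inverse developing map on $N$ is an affine cylinder embedded in $S$ and containing $l$.

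The hard part is the bookkeeping already used in Proposition~\ref{prop_lin_hol}, namely making precise that for $q_0$ chosen small enough the flow box around $l$ genuinely embeds in $S$ (this is where one uses that $l$ is a compact embedded simple closed curve on an orientable surface, hence has an embedded annular neighbourhood), together with checking that the developed region really is in the normal form demanded by the definitions — a straight strip modulo a translation parallel to $\theta$ in the first case, and (after recentering at the fixed point of the monodromy) a region containing an honest infinite angular sector with apex at that fixed point in the second. Once these normalizations are carried out, the proposition follows immediately.
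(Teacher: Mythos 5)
Your proof is correct and follows the same route as the paper: both arguments reduce the statement to Proposition~\ref{prop_lin_hol}, which gives the first return map $z\mapsto\rho(l)z$ on a small transversal through $l$, and then identify the resulting flow box with a flat cylinder when $\rho(l)=1$ and with an affine cylinder when $\rho(l)\neq 1$. The paper simply asserts this identification as an equivalence, whereas you carry it out explicitly via the developing map and the holonomy $A$ (recentering at its fixed point and exhibiting the invariant angular sector), so your write-up supplies the normalization details the paper leaves implicit.
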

\begin{proof}
    This follows almost entirely from Proposition \ref{prop_lin_hol} which asserts that if we have a closed leaf $l$, then there exists a segment $q_0$ orthogonal to $\mathcal{F}_{\theta}$ whose midpoint lies on $l$ such that the first return map $f:q_0 \rightarrow q_0$, possibly defined only on a subinterval of $q_0$ that contains the midpoint of $q_0$, is of the form $z \rightarrow \rho(l) \cdot z$. If $\rho(l) = 1$, then this is equivalent to saying that $l$ is contained in a flat cylinder, if $\rho(l) \neq 1$ this is equivalent to saying that $l$ is contained in an affine cylinder. 
\end{proof}

\subsection{Triangulations} At this point, we want to insert a brief comment on the existence of polygonal representations of dilation surfaces using the definition of affine cylinders. The reader is invited to verify that a dilation surface has a polygonal representation where the vertices project to the singularities of the surface if and only if it has a triangulation where the set of vertices is exactly the set of singularities of the surface and where the sides of the triangles are straight lines. Below we state a theorem from William A. Veech, as it is formulated in \cite{affinesurfacesandveechgroups}, that gives a necessary and sufficient condition for a dilation surface to have such a triangulation. 
\begin{thm}[Veech, see \cite{affinesurfacesandveechgroups}]
    A dilation surface has a triangulation if and only if it does not contain an affine cylinder of angle greater than $\pi$.
\end{thm}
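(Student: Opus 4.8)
The plan is to prove the two implications separately; the reverse direction — that absence of an affine cylinder of angle greater than $\pi$ guarantees a triangulation — is the substantial one.

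For the forward implication I would argue by contraposition. Suppose $S$ contains an embedded affine cylinder in some direction, and enlarge it to a \emph{maximal} one, $C$: a cylinder whose two transverse boundary components cannot be swept any further across $S$. Maximality forces each boundary component into the singular set $\Sigma$ — a boundary leaf that closed up would be a closed leaf and $C$ could still be enlarged, and it cannot run into the interior of $C$. The key geometric fact is that a straight segment issued from a point of $C$ in a direction lying in the angular range swept by $C$ stays trapped in $C$, spiralling towards the core leaf: in a developing chart the holonomy around the core leaf is a nontrivial homothety, and the argument of such a segment is monotone, so it keeps wrapping through the circular gluing and never reaches a radial boundary. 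Since $C$ contains no singularity, such a segment is never a saddle connection, hence never an edge of a geodesic triangulation. Following the boundary to the singularity (or cone apex) where $C$ closes up, one exhibits a singularity $p$ flanked on the $C$-side by a single angular sector of width $\geq \alpha > \pi$ that no triangulation edge can subdivide, so the triangle with corner at $p$ on that side would have an angle $\geq \alpha > \pi$ — impossible, since the angles of a Euclidean triangle sum to $\pi$.

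For the reverse implication I would build the triangulation by cutting. Fix a maximal family $\Gamma$ of pairwise non-crossing saddle connections on $S$ (straight segments with endpoints in $\Sigma$ and interior disjoint from $\Sigma$); one must first check that such a family exists and is finite, finiteness following because pairwise disjoint, pairwise non-homotopic simple arcs on a fixed surface are bounded in number, and because two saddle connections forming a bigon or monogon would yield either a crossing or a new non-crossing saddle connection, contradicting maximality. Cutting $S$ along $\Gamma$ produces finitely many dilation surfaces with boundary, each free of singularities and of saddle connections in its interior; I then claim each such piece $P$ is a triangle. If some piece is not a triangle, shoot a geodesic from one of its boundary vertices into the interior: by a Poincar\'e--Bendixson-type alternative for foliations on surfaces, it either exits $P$ across the boundary, or is trapped with limit set containing a closed leaf $\ell\subset P$. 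In the trapped case, $\ell$ has linear holonomy (a zero-holonomy closed leaf would lie in a flat cylinder whose core meets a transverse saddle connection between boundary singularities of $P$, contradicting maximality of $\Gamma$), so $\ell$ lies in an affine cylinder inside $P$; in the exiting case, if this persists for all admissible directions one can draw an interior diagonal of $P$ — a new saddle connection — contradicting maximality, unless the obstruction is concentrated at a vertex $p$ of $P$ with an uncuttable interior angle $\geq\pi$, which as above forces an affine cylinder of angle $\geq\pi$. In every case one finally upgrades the cylinder so produced to have angle \emph{strictly} greater than $\pi$, since an affine or flat cylinder of angle $\leq\pi$ admits straight radial boundary segments that are themselves transverse saddle connections between boundary singularities of $P$, once more contradicting the maximality of $\Gamma$. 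Hence, when $S$ has no affine cylinder of angle $>\pi$, every piece is a triangle and $\Gamma$ is the edge set of a geodesic triangulation with vertex set exactly $\Sigma$.

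I expect the main obstacle to be the reverse implication, and specifically the trichotomy "the geodesic leaves the piece / spirals into a flat cylinder / spirals into an affine cylinder" together with the cone-angle bookkeeping, which has to be pushed until the \emph{only} obstruction to a cut-open piece being a triangle is an affine cylinder, necessarily of angle $>\pi$. The two delicate technical points are the finiteness of $\Gamma$ and the borderline case $\alpha=\pi$: since a dilation surface carries no canonical metric, the usual compactness and shortest-geodesic arguments are unavailable and must be replaced by homotopy- and holonomy-theoretic ones.
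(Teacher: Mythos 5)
Your forward implication is, at its core, the same argument the paper gives: an affine cylinder of angle greater than $\pi$ traps, for every direction, any straight segment entering it, so no saddle connection --- hence no edge of a geodesic triangulation --- can cross the cylinder, whose interior contains no singularity. That part is fine, although the closing flourish about a singularity flanked by a sector of width $\geq \alpha > \pi$ and the angle sum of a Euclidean triangle is both unnecessary and not quite right: the cylinder is glued along concentric arcs, and the angle its closure subtends at a boundary singularity need not equal $\alpha$. Note also that the paper itself proves \emph{only} this easy direction and explicitly defers the converse to the appendix of the cited reference, so everything you write for the reverse implication goes beyond what the thesis establishes.

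It is in the reverse implication that there is a genuine gap. Your trichotomy rests on ``a Poincar\'e--Bendixson-type alternative'': a geodesic ray trapped in a cut-open piece $P$ either exits or has limit set containing a closed leaf. This is false for dilation surfaces, and the present thesis is largely devoted to the counterexamples: a trapped half-leaf can have a minimal limit set, or accumulate on a Cantor-like non-trivially recurrent leaf closure containing no closed leaf at all (see Proposition \ref{prop_disco_surface} and cases (3) and (4) of Theorem \ref{maintheorem}). The classical Poincar\'e--Bendixson dichotomy needs hypotheses (a transverse invariant measure, or planarity) that a dilation structure does not supply. Ruling out minimal and Cantor-like limit sets inside a singularity-free piece bounded by saddle connections is precisely the hard content of Veech's argument, and your sketch assumes it away. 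The remaining steps --- finiteness and existence of the maximal non-crossing family $\Gamma$, the claim that a zero-holonomy closed leaf in $P$ yields a transverse saddle connection contradicting maximality, the passage from an ``uncuttable interior angle $\geq\pi$'' to an affine cylinder, and the final upgrade from angle $\geq\pi$ to angle strictly greater than $\pi$ --- are each asserted rather than proved, and they are exactly the delicate points you flag yourself at the end.
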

\begin{proof} It is easy to see that if there is a cylinder of angle greater than $\pi$, then there is no such triangulation of the surface where the sides are straight lines, as any straight line that enters the cylinder will never leave the cylinder again (note that for any direction $\theta \in S^1$, there exists a closed leaf in direction $\theta$ inside the cylinder). The other direction is more involved and can be found in the appendix of \cite{affinesurfacesandveechgroups}. 
\end{proof}
An example of such a surface that has no triangulation is the Hopf Torus, a genus one dilation surface without singularities. It is obtained by taking the quotient of $\mathbb{C}\backslash\{0\}$ by an affine map of the form $z \rightarrow \lambda z$, where $\lambda \in \mathbb{R}_{>0}$. 

\begin{figure}[h]
\centering
\includegraphics[width=0.3\textwidth]{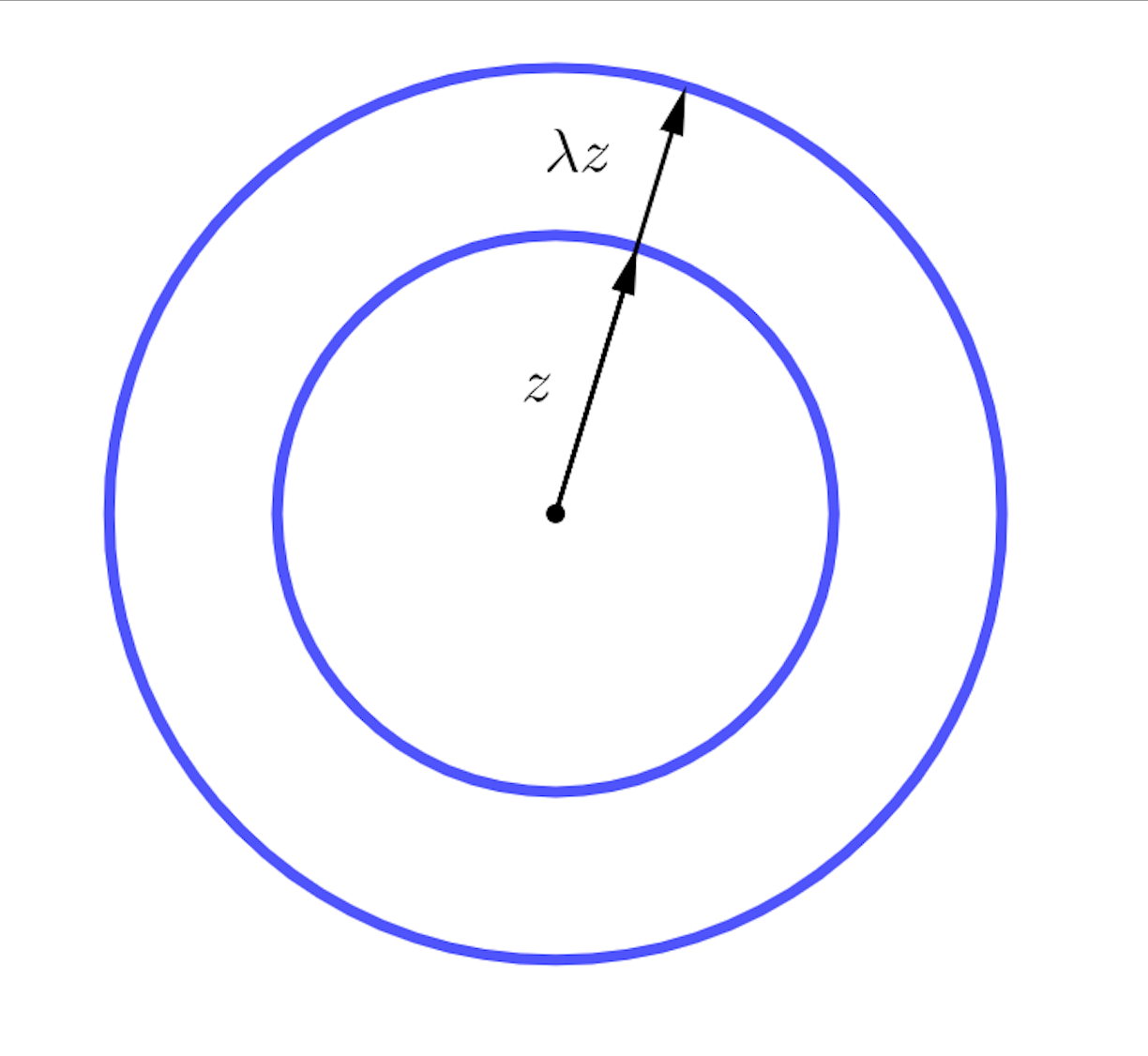}
\caption{The Hopf Torus.}
\end{figure}

\begin{rem} This surface is also an example of a dilation surface where every directional foliation is Morse-Smale. For any direction $\theta \in S^1$, the reader is invited to verify that there exist two closed, diametrically opposed leaves whose linear holonomy is non-trivial and reciprocal to each other, meaning that one is attracting and the other one repelling. 
\end{rem}

We now proceed with our discussion of recurrence on dilation surfaces. In the next section, we focus on non-trivial recurrence on dilation surfaces. 

\subsection{Non-trivial recurrence and the Disco surface} We want to give two different examples of directional foliations that exhibit non-trivial recurrence on dilation surfaces: \textit{minimal} foliations and \textit{Cantor-like} foliations. We first discuss the easier case of minimal foliations that can also be found on dilation surfaces that are translation surfaces. Consider directional foliation $\mathcal{F}_{\theta}$ on the Torus, where the angle $\theta \in S^1$ is irrational. It is well known that in this case, any regular leaf will never be periodic but instead be dense on the torus. By definition, a leaf that is dense on the surface is a non-trivially recurrent leaf, note that here the non-trivially recurrent leaf closure is simply the whole torus.
\begin{defs}
    A directional foliation on a dilation surface is called \textit{minimal} if every regular leaf is dense on the surface. We further call a dilation surface together with a minimal foliation on the surface a \textit{minimal dilation surface}. 
\end{defs}

As in the case of flat and affine cylinders, there is also a case of non-trivial recurrence that only arises on dilation surfaces that are not translation surfaces. In this case, the foliation accumulates on a set whose cross-section is a Cantor set, which is why this case is called Cantor-like. In the remainder of this section, we explicitely construct an example for this case using the \textit{Disco surface.}. The Disco surface is a genus two dilation surface with two singularities of angle $4\pi$, obtained by glueing the polygon in Figure 21 along edges of the same color. In the figure, vertices of the same color project to the same singularity. The figure further shows the four affine cylinders that give the Disco surface its name. 
 
\begin{figure}[h]
\centering
\includegraphics[width=0.75\textwidth]{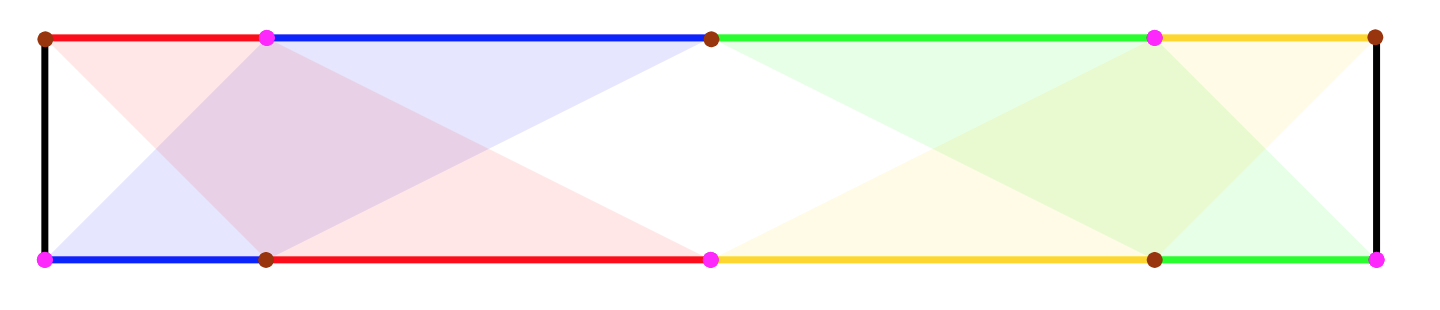}
\caption{Four affine cylinders embedded in the Disco surface.}
\end{figure}

The Disco surface is a particularly interesting example of a dilation surface as its directional foliations can be in any of the three cases we previously mentioned, however there are also some directions for which the foliation accumulates on a Cantor set. We want to geometrically visualize these directions and to do so, we modify the polygonal representation of the Disco surface using two so-called "cut and paste" operations. In a first step, these operations involve cutting the polygon into two pieces along a straight line. The two new edges that are created this way are then identified with each other, hence they are assigned the same color. In a second step, the two pieces of the polygon are then glued back together, this time along a different pair of edges that shares the same color. The operations that we use are visualized in Figure 22 below.

\begin{figure}[h]
\centering
\includegraphics[width=1.1\textwidth]{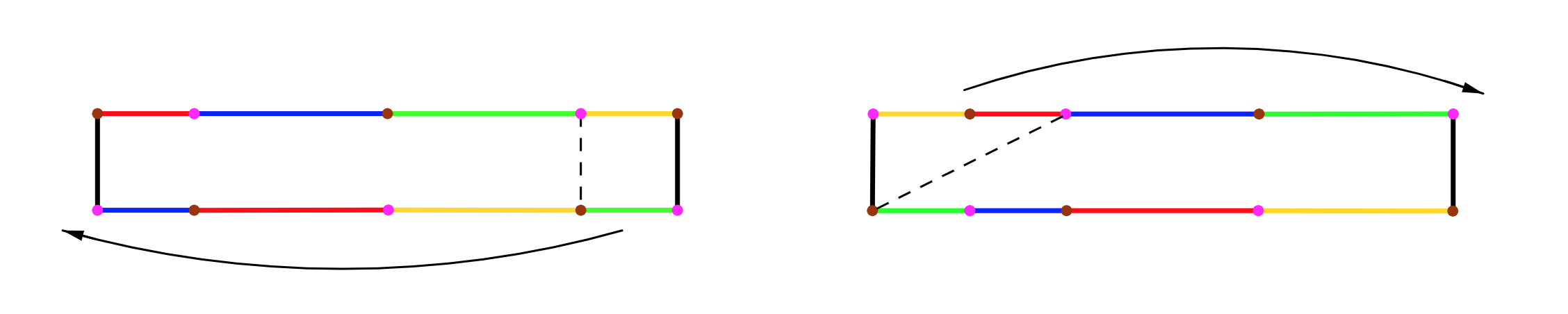}
\caption{The two "cut and paste" operations.}
\end{figure}

Note that for both operations in Figure 22, we choose the line along which we cut in such a way that the operation is continuous. This means that we do change the polygonal representation, but the surface obtained from glueing the polygon remains the Disco surface. 

\begin{figure}[h]
\centering
\includegraphics[width=0.85\textwidth]{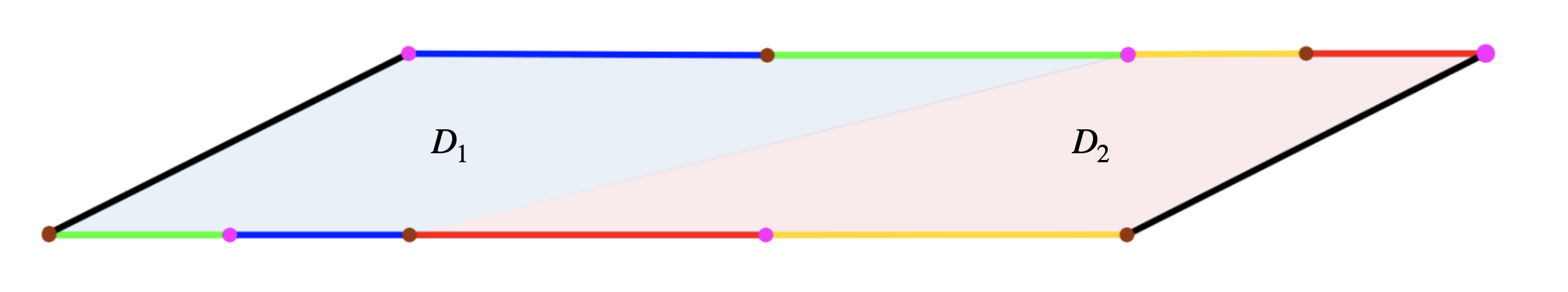}
\caption{The new polygonal representation of the Disco surface.}
\end{figure}

The new polygonal representation of the Disco surface makes two subsurfaces $D_1$ and $D_2$ appear, colored in Figure 23 in blue and in red. Let $R \subset S^1$ denote the set of directions that lie between the two skewed lines that bound $D_1$ to the left and right. Choose $\theta \in R$ and note that any leaf of $\mathcal{F}_{\theta}$ that enters $D_1$ will stay trapped in it thereafter and never leave $D_1$ again. Similarly, any leaf of $\mathcal{F}_{-\theta}$ that enters $D_2$ will stay trapped in it, as the red subsurface is just the blue subsurface rotated by 180°. Denote by $L$ the closed interval formed by the green and blue segment on the bottom of the polygonal representation of the Disco surface as illustrated in Figure 24. Draw two parallel lines in direction $\theta$ starting at the right and the left endpoint of $L$, call one of these lines $l$ as illustrated in Figure 24. Do the same in the red subsurface but rotate every action by 180°. We color the complement in $D_1$, respectively $D_2$, of the region bounded by the parallel lines in direction $\theta$ in dark blue, respectively dark red.

\begin{figure}[H]
\centering
\includegraphics[width=0.9\textwidth]{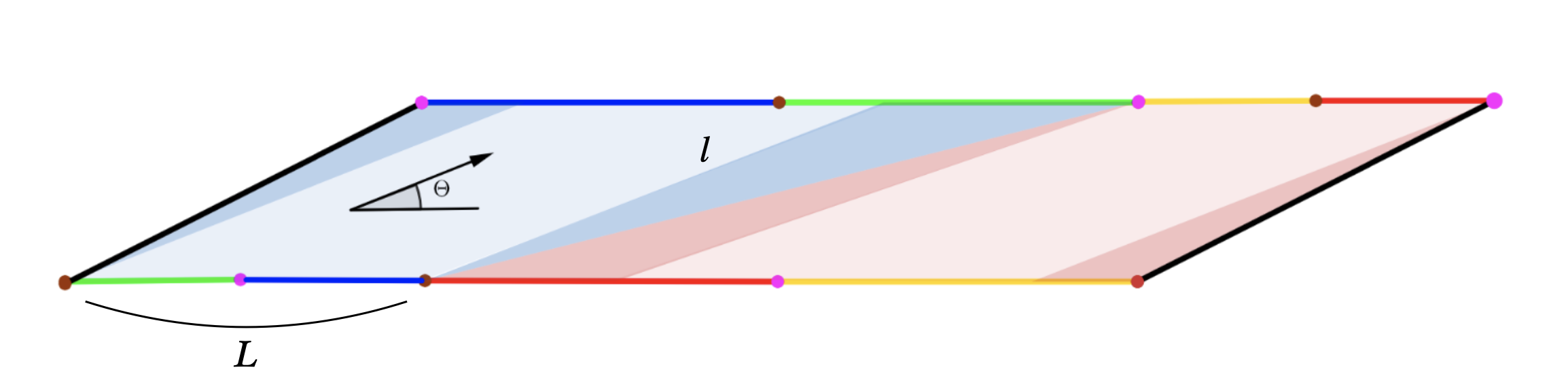}
\caption{}
\end{figure}

In a next step, we begin to "flow" all of the dark blue area in direction $\theta$. This means we travel in the forward direction along the leaves of $\mathcal{F}_{\theta}$ contained in the dark blue area and we color all the points that we reach on the way in dark blue. After one iteration, the initial dark blue area will join together to form a strip in the middle of the blue subsurface, as shown in Figure 25. We repeat exactly the same process for the dark red area, where we travel in the backward direction along the leaves of $\mathcal{F}_{\theta}$ contained in the dark red area. If we repeat this process infinitely many times, the dark blue, respectively the dark red strip will continue winding around the blue, respectively red subsurface. Denote by $I \subset L$ the interior of the intersection between $L$ and the dark blue strip. 

\begin{figure}[H]
\centering
\includegraphics[width=0.92\textwidth]{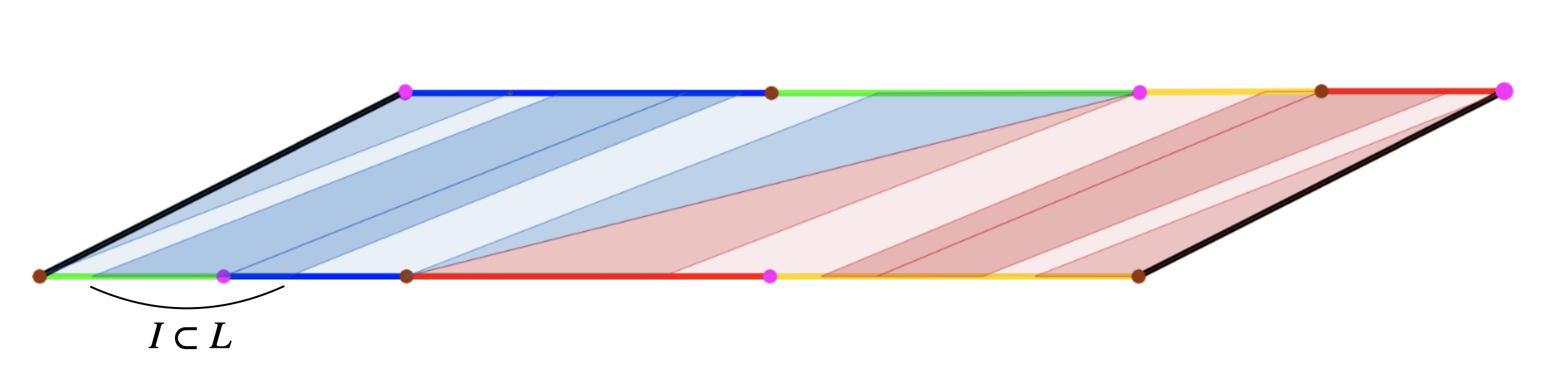}
\caption{The first iteration.}
\end{figure} 

\begin{prop}\label{prop_disco_surface}
Let $\Sigma$ denote the set of singularities of the Disco surface $D$, let $\theta \in S^1$, let $f$ be the first return map on $L$ with respect to $\mathcal{F}_{\theta}$. Assume that $f^n(I) \cap \Sigma = \emptyset$ for all $n \in \mathbb{N}_{>0}$. Then the set 
$$\Omega := L-\bigcup_{n=0}^{\infty} f^n(I)$$
is a Cantor Set and for all $p \in L$, the set of accumulation points of $(f^n(p))_{n \in \mathbb{N}_{>0}}$ is equal to $\Omega$.
\end{prop}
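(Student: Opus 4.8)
The plan is to extract from the flowing construction of Figures 24--25 that the sets $f^n(I)$, $n\ge 0$, are pairwise disjoint open subintervals of $L$ with dense union, and then to deduce from this both the Cantor property of $\Omega$ and the description of the accumulation sets. The hypothesis is used at the outset: a point of $L$ is a discontinuity of the first return map $f$ precisely when its forward leaf hits a singularity before returning to $L$, so $f^n(I)\cap\Sigma=\emptyset$ for all $n$ says the forward orbit of $I$ never straddles a discontinuity of $f$; hence each $f^n$ is a single affine homeomorphism on $I$, each $f^n(I)$ is an open subinterval of $L$, and $\bigcup_{n\ge 0}f^n(I)$ is open, so $\Omega$ is closed. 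It then remains to show (a) the $f^n(I)$ are pairwise disjoint, (b) $\bigcup_{n\ge 0}f^n(I)$ is dense in $L$, and (c) $\Omega$ has no isolated points; granting these, $\Omega$ is a nonempty closed subset of $L$ with empty interior --- hence totally disconnected, since a connected subset of $\mathbb R$ with more than one point contains an open interval --- and perfect, i.e. a Cantor set (it is nonempty because the infinitely many distinct endpoints of the disjoint intervals $f^n(I)$ accumulate somewhere in the compact interval $L$, and an accumulation point of those endpoints lies in no open gap, hence in $\Omega$).

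For (a), the picture to read off the figures is that one step of flowing the initial region $B_0:=D_1\setminus P$ (with $P$ the region of $D_1$ cut off by the two lines in direction $\theta$ through the endpoints of $L$) paints on $L$ exactly $I$, and each subsequent step paints $f(I),f^2(I),\dots$ in turn; equivalently, $f$ is injective where defined and $I$ is exactly the part of $L$ missed by the return map, $I\cap f(\operatorname{dom}f)=\emptyset$. Disjointness is then formal: a point of $f^n(I)\cap f^m(I)$ with $n>m$ would, after applying $f^{-m}$ (legitimate, since $f$ is injective and $f^j(I)\subseteq\operatorname{dom}f$ for all $j$ by the hypothesis), give a point of $I\cap f^{n-m}(I)\subseteq I\cap f(\operatorname{dom}f)=\emptyset$. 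The same bookkeeping identifies $\Omega$ with $\bigcap_{n\ge 0}f^n(\operatorname{dom}f^n)$, so $\Omega$ is forward $f$-invariant and $f$ restricts to a bijection of $\Omega$.

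The step I expect to be the main obstacle is (b) --- equivalently, that $\Omega$ has empty interior --- because this is exactly where the dilation, rather than translation, nature of the Disco surface must enter: on a translation surface a minimal set is always a whole subsurface, meeting a transversal in an interval, so Cantor-likeness is a genuinely dilation phenomenon and the argument must use the linear holonomy. If $\Omega$ contained an open interval $K$, the saturation $V$ of $K$ by leaves of $\mathcal F_\theta$ would be an open invariant subsurface swept by recurrent leaves; a loop in $V$ transverse to $K$ with non-trivial linear holonomy would, by Proposition \ref{prop_lin_hol}, make the leaves through $K$ spiral and leave $L$, against $K\subseteq\Omega$, so every loop in $V$ has trivial holonomy, $V$ is a translation subsurface, $\mathcal F_\theta|_V$ is completely periodic or minimal, and in particular admits no wandering interval. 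One then rules this out with the explicit combinatorics of the Disco surface for $\theta\in R$: the dark-blue strip winds around so as to meet every subinterval of $L$. With density in hand, (c) is immediate: every endpoint of every $f^n(I)$ lies in $\Omega$ (by disjointness it is in no gap), and for $q\in\Omega$ and $\varepsilon>0$ density supplies a gap $f^n(I)$ meeting $(q-\varepsilon,q+\varepsilon)$; since $q$ is not in that gap, one of its endpoints lies within $\varepsilon$ of $q$ and is distinct from $q$. Hence $\Omega$ is a Cantor set.

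For the last assertion, fix $p\in L$ and let $A(p)$ denote the set of accumulation points of $(f^n(p))_{n>0}$. Here $A(p)\subseteq\Omega$ because $I$ is forward-\emph{wandering}: two returns $f^{n_1}(p),f^{n_2}(p)\in I$ with $n_1<n_2$ would give $f^{n_1}(p)\in I\cap f^{-(n_2-n_1)}(I)=\emptyset$, so the orbit of $p$ meets $I$, hence every gap $f^k(I)$, at most once, and therefore accumulates in no open gap. For the reverse inclusion $\Omega\subseteq A(p)$ one uses that $f|_\Omega$ is minimal: this is the content of the Cantor-like case of Theorem \ref{maintheorem} specialised to the Disco surface, and can also be read off the renormalization analysis in \cite{cascades}; namely $f|_\Omega$ has no periodic point (one would, by the classification of closed leaves, sit on a flat or affine cylinder meeting $L$, excluded as above), and the first return map on $L$ is semi-conjugate to a minimal interval exchange whose semi-conjugacy collapses exactly the gaps $f^n(I)$, so minimality transfers to density of every $f$-orbit inside $\Omega$. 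Since moreover $|f^n(I)|\to 0$, the orbit of an arbitrary $p$ shadows an $\Omega$-orbit ever more closely, so $A(p)\supseteq\Omega$. Combining the two inclusions gives $A(p)=\Omega$ for every $p\in L$, as claimed.
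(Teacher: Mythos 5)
Your skeleton (closedness, pairwise disjointness of the gaps $f^n(I)$, density, perfectness, then the two inclusions for the accumulation set) matches the intended structure, and your disjointness argument and the inclusion $A(p)\subseteq\Omega$ are essentially the paper's. However, density --- which you yourself flag as the main obstacle --- is never actually proved: your reduction terminates in the assertion that ``the dark-blue strip winds around so as to meet every subinterval of $L$'', which is exactly the statement to be established, and the intermediate holonomy argument is itself shaky (Proposition \ref{prop_lin_hol} concerns closed leaves, not arbitrary loops in the saturation of $K$, and forward invariance of $\Omega$ does not make the leaves through $K$ recurrent a priori). The missing ingredient is a one-line measure computation: the relevant sides of the Disco surface are glued with dilation factor $2$, so $\lambda(I)=\tfrac12\lambda(L)$ and $\lambda(f^n(I))=2^{-n}\lambda(I)$; since the gaps are pairwise disjoint, $\sum_{n\ge 0}\lambda(f^n(I))=\lambda(L)$ and hence $\lambda(\Omega)=0$. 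This gives empty interior and total disconnectedness simultaneously, with no appeal to the combinatorics of the strip or to translation subsurfaces.

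Your step (c) is also incorrect as stated: density of the gaps does not imply that $\Omega$ is perfect, because a point $q\in\Omega$ could be a common endpoint of two adjacent gaps $f^m(I)$ and $f^n(I)$, in which case every gap meeting a small interval around $q$ has its near endpoint equal to $q$ and its far endpoint outside the interval. Excluding this adjacency is precisely where the hypothesis $f^n(I)\cap\Sigma=\emptyset$ does its real work: if $\overline{f^m(I)}$ and $\overline{f^n(I)}$ met in a point, pulling back by $f$ one finds an iterate of $I$ whose closure contains a singularity, which the hypothesis forbids. You use the hypothesis only to keep $f^n|_I$ a single affine homeomorphism, and so you miss this. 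Finally, for $\Omega\subseteq A(p)$ you invoke minimality of $f|_\Omega$ via Theorem \ref{maintheorem} or \cite{cascades}; since applying the Cantor-like case of the main theorem to the Disco surface presupposes the existence of the non-trivially recurrent leaf closure that this very proposition constructs, that appeal is circular as written. The direct argument is elementary: any two nearby points of $\Omega$ are separated by some gap $f^n(I)$ (because $\Omega$ has empty interior), and the forward orbit of any point of $I$ visits every gap, hence enters every neighbourhood of every point of $\Omega$; an arbitrary $p\in L$ lies in $\Omega$ or in some $f^k(I)$ and is handled by shifting the orbit.
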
 

\begin{proof} $\Omega$ is a closed set as $I$, and hence also $f^n(I)$, is open for all $n \in \mathbb{N}_{>0}$ and a countable union of open sets is open. Moreover, note that since the edges of the Disco Surface are glued with dilation factor $2$, we have $\lambda(I) = \frac{1}{2} \cdot \lambda(L)$, where $\lambda$ denotes the Lebesgue-measure in the complex plane. Furthermore, $\lambda(f^n(I)) = \frac{\lambda(I)}{2^n}$ as every iteration of $I$ divides its length by two. Moreover, $f^n(I) \cap f^m(I) = \emptyset$ for $n,m \in \mathbb{N}_{>0}$ where $n < m$ ; if there was some $x \in f^n(I) \cap f^m(I)$, then $f^{-n}(x) \in I \cap f^{m-n}(I)$ which is impossible as $I$ is disjoint from $f(L)$. Thus, 

$$ \lambda(\Omega) = \lambda(L)-\lambda(L)\cdot \sum_{n=0}^{\infty} \frac{1}{2^n} = \lambda(L) - \lambda(L) = 0$$

Therefore, $\Omega$ is totally disconnected, as any connected component of $\Omega$ other than a singleton would have positive measure. Moreover, $\Omega$ has no isolated points: Indeed, if there was an isolated point, then this would mean that for some $m,n \in \mathbb{N}_{>0}$ we have that $f^m(I)$ and $f^n(I)$ are two open sets "right next to each other", i.e their closures intersect in exactly one point $p'$. Then also the closures of $f^{m-n}(I)$ and $I$ intersect in  $f^{-n}(p')$ and hence the closure of $f^{m-n-2}(I)$ contains the singularity $f^{-(n+2)}(p')$. But this is only possible if $I$ at some point "hits" a singularity, otherwise there will always be some open ball around the singularity that does not intersect $f^{m-n-2}(I)$. Hence, $\Omega$ is a Cantor set. Note further that any leaf that intersects $I$ will accumulate to $\Omega$: pick a point $x \in \Omega$, then in any neighbourhood of $x$ there is a sequence of points in $\Omega$ that accumulates to $x$. Take two of these points, then the segment between them contains some image of $I$ and hence any leaf that intersects $I$ also passes through the neighbourhood of this point. So any leaf in $I$ is attracted by $\Omega$.
\end{proof}

If $\Omega \subset L$ has these properties, then the product $C_{\theta}^+ := \Omega \times l$ (where $l$ is the right skewed line in direction $\theta$ in Figure 24) has the following properties: \vspace{2mm}

    \begin{itemize}
    \item $C_{\theta}^+$ is a set of straight lines in direction $\theta$ whose intersection with $L$ is a Cantor Set. 
    \item $L_{\omega}(p) = C_{\theta}^+$ for all $p \in D_1$.
    \end{itemize}
    \vspace{2mm}
    
Note that the set $C_{\theta}^+$ is a non-trivially recurrent leaf closure for $\mathcal{F}_{\theta}$ by definition since it is closed and for any $p \in D_1$, hence also for $p \in C_{\theta}^+$, we have that $L_{\omega}(p)$ is equal to $C_{\theta}^+$ and thus $p \in L_{\omega}(p)$. Furthermore, the foliation $\mathcal{F}_{\theta}$ on $D_1$ behaves exactly in the same way as the foliation $\mathcal{F}_{-\theta}$ on $D_2$, hence there exists a set $C_{\theta}^-$ in the red subsurface $D_2$ with the same properties as $C_{\theta}^+$, except that it is a repelling non-trivially recurrent leaf closure, meaning that $L_{\alpha}(p) = C_{\theta}^-$ for all $p \in D_2$. Moreover, the "holes" of the Cantor-like set $C_{\theta}^-$ are obtained by iterating $I$ backwards in direction $-\theta$, in the same way that the "holes" of $C_{\theta}^+$ are obtained by iterating $I$ in direction $\theta$. Hence, the Disco surface can be fully decomposed into two non-trivially recurrent leaf closures $C_{\theta}^+$ and $C_{\theta}^-$ whose cross-section is a Cantor set and their complement $D - C_{\theta}^+ - C_{\theta}^-$ which consists of one connected component that winds around the surface, made up of all the leaves that intersect $I$. Any regular leaf in $D - C_{\theta}^+ - C_{\theta}^-$ is attracted by $C_{\theta}^+$ and repelled by $C_{\theta}^-$. 

\begin{rem} The fact that there exist directions $\theta \in S^1$ for which $\mathcal{F}_{\theta}$ on the Disco surface satisfies the assumptions of Proposition \ref{prop_disco_surface} follows from the detailed study of the Disco surface presented in \cite{cascades}. In this paper, the authors show that there exists a Cantor set of directions $\theta \in S^1$ for which the corresponding foliation on the Disco surface has a non-trivially recurrent leaf closure whose cross-section is a Cantor set. In order to find these directions, they use a procedure called \textit{Rauzy-Veech-induction}. This procedure associates to any directional foliation on the Disco surface a word in the alphabet $\{ L,R \}$. The authors then show that the words that are infinite as well as not eventually constant correspond to the directions with a non-trivially recurrent leaf. In the appendix, we give a detailed explanation why these directions correspond exactly to the directions which satisfy the assumptions of Proposition \ref{prop_disco_surface}. \end{rem}

We conclude with the definition of Cantor-like foliations. In the case of the Disco surface there are two distinct non-trivially recurrent leaf closures. We call a foliation Cantor-like if it only has one such closure and no closed leaves, an example would be the foliation $\mathcal{F}_{\theta}$ on the Disco surface restricted to the sub-surface $D_1$, where $\theta$ satisfies the assumptions in Proposition \ref{prop_disco_surface}.

\begin{defs} A directional foliation on a dilation surface is called \textit{Cantor-like} if it does not contain a closed leaf and if there exists a unique non-trivially recurrent leaf closure $\Omega$ such that the intersection of $\Omega$ and any transversal segment is either empty or a Cantor set.
\end{defs}

Hence we have discussed four different types of behaviours for the directional foliation on a dilation surface: completely periodic, Morse-Smale, minimal and Cantor-like. The main theorem of this thesis will show that these four types of directional foliations are the only ones that arise on dilation surfaces. 
%More precisely, we will show that we can decompose any dilation surface with a directional foliation into finitely many subsurfaces on which the directional foliation is in one of these four cases. 
A key ingredient in the proof of this statement is Gardiner's decomposition theorem, which will be the main focus of the next chapter. 

% Chapter 4 File

\section{Gardiner's Decomposition Theorem}\label{chapter3}

In this chapter, we state and explain Gardiner's decomposition theorem which will be of key importance for the proof of our main theorem in Chapter 6. The statement and proof of the decomposition theorem was published by C.J Gardiner in the \textit{Journal of Differential Equations} in 1985 (see \cite{gardiner}). It asserts that given a foliation or flow with finitely many singularities on a surface $M$, we can decompose $M$ into finitely many subsurfaces that contain at most one non-trivially recurrent leaf closure. To be able to state the theorem in full, we want to introduce the notion of an \textit{irreducible foliation}, as defined for flows in \cite{gardiner}: 
\begin{defs} We call a foliation on a surface $M$ \textit{irreducible} if the surface contains a unique non-trivially recurrent leaf closure that meets every homotopically nontrivial curve on $M$ in at least one non-trivially $\alpha-$ or $\omega-$recurrent point. 
\end{defs}
Hence, if the foliation is irreducible, it is not possible to separate $M$ further into two subsurfaces using a homotopically non-trivial curve such that one of the subsurfaces contains the entire non-trivially recurrent leaf closure. We can also define irreducibility for an open submanifold $N$ of $M$:
\begin{defs}Let $M$ be a surface and $\mathcal{F}$ a foliation on $M$. Let $N$ be an open submanifold of $M$, let $N^*$ be the manifold obtained by compactifying each end of $N$ with a point. There is a foliation $\mathcal{F}|_N$ on $M$ whose leaves are the leaves of $M$ intersected with $N$. Let $\mathcal{F}|_{N^*}$ be the foliation on $N^*$ obtained from $\mathcal{F}|_{N}$ by making each point of $N^*\backslash N$ a singular point. The foliation $\mathcal{F}|_N$ is called \textit{irreducible} if the frontier of $N$ contains no non-trivially $\alpha$- or $\omega$-recurrent point of $\mathcal{F}$ and if the foliation $\mathcal{F}|_{N^*}$ is irreducible. 
\end{defs}
\subsection{Statement of the theorem} As Gardiner's decomposition theorem was originally stated for flows on surfaces, we will reformulate it using the following notion:

\begin{defs} We say that a foliation $\mathcal{F}$ on a surface $M$ \textit{admits a continuous flow with finitely many singularities} if there exists a continuous flow $\phi$ on $M$ with finitely many singular points where the leaves of $\mathcal{F}$ are exactly the orbits of $\phi$.

\end{defs}
We now have all the background material necessary to state Gardiner's decomposition theorem (see also \cite{gardiner}, page 152). 

\begin{thm} [Gardiner's Decomposition Theorem] Let $\mathcal{F}$ be a foliation on a closed surface $M$ that admits a continuous flow with finitely many singularities. There is a finite set $\mathcal{C}$ of homotopically nontrivial closed curves on $M$ such that 
\begin{enumerate}
    \item no curve of $\mathcal{C}$ contains a non-trivially $\alpha$ or $\omega$- recurrent point of $\mathcal{F}$. 
    \item if $M_i, 1 \leq i \leq n$, are the components of $M \backslash \bigcup_{C \in \mathcal{C}}C$ then, for each $i$, either $\mathcal{F}|_{M_i}$ is irreducible or $M_i$ contains no non-trivially recurrent point of $\mathcal{F}$. 
\end{enumerate}
\end{thm}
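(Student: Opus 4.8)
The plan is to work not with $\mathcal{F}$ directly but with a continuous flow $\phi$ with finitely many singularities whose orbits are the leaves of $\mathcal{F}$; this is exactly the hypothesis, and it lets one import the classical Poincar\'e--Bendixson machinery for flows on surfaces. The overall strategy is an induction on the topological complexity of $M$: one repeatedly cuts $M$ along homotopically nontrivial simple closed curves that avoid the non-trivially recurrent points, each cut either separating off one \emph{quasiminimal set} (by which I mean a non-trivially recurrent leaf closure) or strictly lowering the complexity of a piece that still carries one, until every remaining piece is either irreducible or free of non-trivial recurrence. The curve system $\mathcal{C}$ is the accumulated (finite) collection of all curves used.

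The first step is to get the bookkeeping under control. I would show that two distinct quasiminimal sets are disjoint and that there are only finitely many of them. Disjointness comes from the standard fact that a quasiminimal set equals the closure of \emph{each} non-trivially recurrent leaf it contains, combined with transitivity of ``lies in the $\alpha$- or $\omega$-limit set of''. Finiteness is a Maier-type argument: near any non-trivially recurrent leaf one can produce a simple closed transversal of the flow, transversals attached to distinct quasiminimal sets are homologically independent in $H_1(M)$, and since $M$ is a closed surface its first homology is finitely generated; here the finiteness of the singular set is what guarantees that the local Poincar\'e--Bendixson picture (and hence the transversal construction) is available. Call the quasiminimal sets $Q_1,\dots,Q_k$.

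Next I would separate them. On the open set $U = M\setminus(Q_1\cup\dots\cup Q_k)$ the flow has no non-trivial recurrence, so every semiorbit in $U$ limits onto a singularity, a closed orbit, or one of the $Q_j$; using this I would enclose each $Q_j$ in a saturated neighbourhood whose frontier is a finite union of simple closed curves that meet the flow transversally away from finitely many points and in particular contain no non-trivially recurrent point, adding these curves to $\mathcal{C}$. Each $Q_j$ now sits in its own component. For the induction step, fix a component $M_i$ carrying exactly one quasiminimal set $Q$ (components carrying none already fall under the second alternative of the conclusion) and pass to $M_i^{*}$, filling ends with singular points. If $Q$ meets every homotopically nontrivial closed curve of $M_i^{*}$ in a non-trivially recurrent point, then $\mathcal{F}|_{M_i}$ is irreducible and we are finished with this piece; otherwise there is an essential simple closed curve $\gamma$ which, since it need not cross $Q$ in a recurrent point, can be put in general position so as to miss all non-trivially recurrent points, crossing only non-recurrent leaves, separatrices and singularities. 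Adding $\gamma$ to $\mathcal{C}$ and cutting strictly decreases a complexity monovariant of the piece containing $Q$ (for instance $-\chi$, or $3g-3$ plus the number of ends), so the process terminates after finitely many cuts, leaving only irreducible or recurrence-free components.

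The main obstacle is the inductive step just described: one must convert the purely homotopical hypothesis ``$Q$ fails to block some essential curve'' into an \emph{honest} simple closed curve that is homotopically nontrivial after the cutting, avoids the non-trivially recurrent set \emph{entirely}, and whose removal is guaranteed to drop a complexity that is bounded below so the induction halts. This forces one to argue carefully about general position with respect to the set of recurrent points (which is dense in $Q$ but not closed), and to keep track of how compactifying ends by singular points interacts with homotopical nontriviality when forming $M_i^{*}$. A secondary, more routine technical point is the construction in the separation step of the saturated neighbourhoods with well-behaved frontier curves, which rests entirely on the local Poincar\'e--Bendixson description of the flow near a quasiminimal set and on the finiteness of the singular set.
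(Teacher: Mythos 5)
The paper does not prove this statement: it is quoted directly from Gardiner's 1985 paper (the text even points to \cite{gardiner}, page 152), and the thesis only supplements it with a proposition on deforming the curves of $\mathcal{C}$ and a verification that directional foliations on dilation surfaces satisfy the hypothesis. So there is no in-paper proof to compare yours against; what you have written is a reconstruction of Gardiner's own argument, and in broad outline it is the right one — finiteness of quasiminimal sets by a Maier-type transversal argument, separation of the quasiminimal sets by curves avoiding the recurrent points, and an induction on a complexity of the pieces.

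As a self-contained proof, however, the sketch has real gaps beyond the ones you flag. First, ``transversals attached to distinct quasiminimal sets are homologically independent'' is essentially the content of Maier's theorem, not a one-line consequence of it; a closed transversal meeting a non-trivially recurrent leaf is homologically nontrivial (flow-box plus no-return), but pairwise independence for distinct quasiminimal sets needs a separate argument, and this is where the genus bound on the number of quasiminimal sets actually comes from. Second, in the separation step, flow-saturated neighbourhoods of a quasiminimal set with frontier a finite union of simple closed curves do not exist in general; the curves one can build are concatenations of transversal arcs and orbit arcs, and showing they are simple, closed, homotopically nontrivial \emph{in $M$}, and disjoint from the non-trivially recurrent set is precisely the technical core of the construction. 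Third, your termination argument needs two facts you do not supply: that the essential curve in $M_i^{*}$ witnessing reducibility can be realised inside $M_i\subset M$ as a curve that is still homotopically nontrivial in $M$ (filling punctures with points can kill homotopy classes, so this is not automatic), and that the chosen monovariant strictly decreases for the piece still carrying $Q$ after \emph{every} admissible cut, including separating ones. None of these is insurmountable — they are all in Gardiner — but as written your proposal records the strategy rather than closes the argument, and for the purposes of this thesis the correct move is the one the author makes: cite the theorem.
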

In simpler terms, Gardiner's decomposition theorem asserts that given a foliation that admits a continuous flow with finitely many singularities on a closed surface $M$, we can "cut" $M$ into subsurfaces using finitely many closed curves such that any subsurface contains at most one non-trivially recurrent leaf closure. The irreducibility criterion tells us that we cannot further decompose the components that contain a non-trivially recurrent leaf closure. 

We claim that we can continuously deform the curves in $\mathcal{C}$ to obtain a decomposition that satisfies (1) and (2).

\begin{prop} Let $\mathcal{F}$ be a foliation on a closed surface $M$, let $\mathcal{C}$ be a set of homotopically non-trivial closed curves on $M$ that satisfy (1) and (2) of Gardiner's decomposition theorem. Let $M_i, 1 \leq i \leq n$ be the components of $M \backslash \bigcup_{C \in \mathcal{C}}C$ and let $\mathcal{C}'$ be obtained from $\mathcal{C}$ by continuously deforming the curves in $\mathcal{C}$. Then $\mathcal{C}'$ still satisfies (1) and (2) of Gardiner's decomposition theorem. 
\end{prop}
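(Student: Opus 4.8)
The plan is to reduce the proposition to the fact that a sufficiently small isotopy of the curves of $\mathcal C$ is induced by an ambient homeomorphism of $M$ supported away from the non-trivially recurrent leaf closures, and then to transport conditions (1) and (2) through that homeomorphism. So first I would isolate the compact set the decomposition is really about: let $K_1,\dots,K_m$ be the non-trivially recurrent leaf closures of $\mathcal F$ on $M$ --- there are finitely many by Gardiner's theorem, and each is compact and connected, being the closure of a connected leaf --- and put $K=K_1\cup\cdots\cup K_m$. Every non-trivially $\alpha$- or $\omega$-recurrent point of $\mathcal F$ lies in some $K_j$: if, say, $p$ is non-trivially $\omega$-recurrent, then $p\in L_\omega(p)$, and $L_\omega(p)$ is a closed invariant set equal to the closure of any non-trivially recurrent leaf dense in it, hence is one of the $K_j$. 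Thus condition (1) for $\mathcal C$ says precisely that $\bigcup_{C\in\mathcal C}C$ is disjoint from $K$ (after the harmless preliminary step of perturbing the curves of $\mathcal C$ off the finitely many singularities, itself a small deformation). Since $\bigcup_{C\in\mathcal C}C$ and $K$ are disjoint compact subsets of $M$, there is a tubular neighbourhood $U$ of $\bigcup_{C\in\mathcal C}C$, a disjoint union of open annuli around the curves, with $\overline U\cap K=\emptyset$.

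Next I would fix the reading of ``continuously deforming'': the deformation should be small enough that the curves stay inside $U$ and remain embedded and disjoint, i.e.\ an isotopy $(\mathcal C_t)_{t\in[0,1]}$ with $\mathcal C_t\subset U$, $\mathcal C_0=\bigcup_{C\in\mathcal C}C$ and $\mathcal C_1=\bigcup_{C'\in\mathcal C'}C'$. By the isotopy extension theorem --- or directly, since inside each annular component of $U$ the deformed curve is isotopic to the core by an isotopy one can damp to the identity near the annulus boundary --- this is realised by an ambient isotopy $(\Phi_t)$ of $M$ supported in $U$; set $\Phi=\Phi_1$. Then $\Phi$ is a homeomorphism of $M$ taking $\bigcup_{C\in\mathcal C}C$ to $\bigcup_{C'\in\mathcal C'}C'$ and equal to the identity on $M\setminus U$, hence on a neighbourhood of $K$. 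In particular $\mathcal C'$ is again a finite family of pairwise disjoint homotopically non-trivial simple closed curves, disjoint from $K$, so (1) holds for $\mathcal C'$.

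For (2) I would transport the structure through $\Phi$. The map $\Phi$ gives a bijection $V\mapsto\Phi(V)$ between the components of $M\setminus\bigcup_{C\in\mathcal C}C$ and those of $M\setminus\bigcup_{C'\in\mathcal C'}C'$, and since $\Phi$ fixes each $K_j$ pointwise, $V$ contains $K_j$ if and only if $\Phi(V)$ does; together with (2) for $\mathcal C$ this shows that each component of $M\setminus\bigcup_{C'\in\mathcal C'}C'$ contains at most one $K_j$. Fix such a component $V'$ and put $V=\Phi^{-1}(V')$. If $V$ has no non-trivially recurrent point, then neither does $V'$, since every such point lies in $K$ and $\Phi^{-1}$ fixes $K$. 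If $\mathcal F|_V$ is irreducible with unique non-trivially recurrent leaf closure $K_j\subset V$, then $K_j\subset V'$ is again the unique one there, the frontier of $V'$ lies in $\bigcup_{C'\in\mathcal C'}C'$ and so contains no non-trivially recurrent point, and the remaining, compactified part of the criterion transfers as follows: extend $\Phi|_V$ to a homeomorphism $\bar\Phi\colon V^{*}\to(V')^{*}$ of the one-point compactifications of the ends, still the identity near $K_j$; then any homotopically non-trivial closed curve $\gamma'$ on $(V')^{*}$ pulls back to a homotopically non-trivial curve $\gamma=\bar\Phi^{-1}(\gamma')$ on $V^{*}$, which by irreducibility of $\mathcal F|_{V^{*}}$ meets $K_j$ in a non-trivially recurrent point $p$, so $\gamma'=\bar\Phi(\gamma)$ passes through $p\in K_j$ with $p$ still non-trivially recurrent for $\mathcal F$. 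Hence $\mathcal F|_{V'}$ is irreducible, and (2) holds for $\mathcal C'$.

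The one genuinely delicate point, and the only step that is not routine, is the choice of reading of ``continuously deforming'' in the second paragraph: for an \emph{arbitrary} homotopy of the curves that merely avoids $K$ at each instant the proposition fails --- one can, for instance, drag a curve around a handle sitting near $K$ and thereby change the isotopy type of its complement, wrecking the uniqueness clause of (2) --- so the deformation must be understood to be small, equivalently supported in a tubular neighbourhood of $\bigcup_{C\in\mathcal C}C$ disjoint from the non-trivially recurrent leaf closures. Once that is granted, the isotopy extension theorem together with the bookkeeping above finishes the argument; the only other point worth stating carefully is the fact used at the outset that the non-trivially recurrent leaf closures account for all the non-trivially recurrent points of $\mathcal F$.
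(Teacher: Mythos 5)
Your argument is internally coherent and takes a genuinely different route from the paper: you realise a small deformation by an ambient homeomorphism $\Phi$ (isotopy extension, supported in an annular neighbourhood $U$ of the curves with $\overline U\cap K=\emptyset$) and transport (1) and (2) through $\Phi$, using that $\Phi$ is the identity near the leaf closures so that the recurrent intersection point $p$ of a pulled-back test curve is fixed. The paper instead argues directly on a putative witness of non-irreducibility: it takes a homotopically nontrivial curve $c$ in the deformed component $(M_i')^*$ avoiding recurrent points, pushes each excursion of $c$ into $M_i'\setminus\overline{M_i}$ onto the old boundary curve $C$ (rel endpoints), and obtains a homotopically nontrivial curve in $M_i^*$ avoiding recurrent points, contradicting irreducibility of $\mathcal F|_{M_i}$. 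Your approach is cleaner where it applies, but it proves a strictly more restrictive statement, and two of your supporting claims do not hold up.

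First, your reduction of condition (1) to ``$\bigcup_{C\in\mathcal C}C$ is disjoint from $K$'' is not justified. Condition (1) only forbids the curves from containing non-trivially recurrent \emph{points}; a leaf closure $K_j$ may contain non-recurrent points (singularities, separatrices, saddle connections), and the paper's own Gardiner decomposition of the Disco surface has the curve $c$ meeting the leaf closures exactly at the singularities. In that situation no tubular neighbourhood $U$ of the curves with $\overline U\cap K=\emptyset$ exists, so your construction fails at its first step; perturbing ``off the singularities'' does not repair this in general, since the curve must be moved off all of $K_j$, and whether that is possible without crossing recurrent points needs an argument you do not give. Second, your claim that the proposition is \emph{false} for deformations that merely avoid recurrent points (the ``drag a curve around a handle'' scenario) is unsubstantiated and almost certainly wrong: an embedded curve freely homotopic to $C$ is isotopic to it, so the new component is homeomorphic to the old one, and if it gained a handle from a neighbouring piece it would have to lose one from $M_i$; a core curve of the lost handle would then be homotopically nontrivial in $M_i^*$ and disjoint from the recurrent points of $K_i$, contradicting the irreducibility of $\mathcal F|_{M_i}$. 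In other words, irreducibility is a filling condition that blocks exactly the deformations you worry about, and this is what the paper's curve-pushing argument exploits to handle deformations that are not small. As written, your proof establishes the proposition only for isotopies confined to an annular neighbourhood disjoint from the leaf closures, which is weaker than what is stated and proved in the text.
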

 \begin{proof} Let $M_i'$ be obtained from $M_i$ by continuously deforming one of its boundary curves $C \in \mathcal{C}$ without hitting a singularity or passing through a non-trivially $\alpha-$ or $\omega$ recurrent point. Note that then $M_i'$ still contains the same unique non-trivially recurrent leaf closure as $M_i$. Let $C'$ be the deformed curve. Assume that $\mathcal{F}|_{M_i'}$ is not irreducible anymore. Then there exists a homotopically nontrivial curve $c$ on $(M_i')^{*}$, the surface obtained from $M_i'$ by compactifying each of its ends with a point, such that $c$ does not intersect the unique non-trivially recurrent leaf closure of $M_i$ in a non-trivially $\alpha-$ or $\omega-$ recurrent point. If $M_i' \subset M_i$ then this would contradict the irreducibility of $\mathcal{F}|_{M_i}$. Now consider a connected component $\tilde{c}$ of the intersection between $c$ and $M_i'-\overline{M_i}$. Because $c$ is homotopically nontrivial on $(M_i')^{*}$, it cannot be entirely contained in $M_i' - \overline{M_i}$ and hence $\tilde{c}$ has to intersect $C$ at its endpoints $p,q$. Since $C'$ is obtained by a continuous deformation of $C$, we can continuously deform $\tilde{c}$ so that it is contained in the segment of $C$ between $p$ and $q$. 
\begin{figure}[h]
\centering
\includegraphics[width=0.8\textwidth]{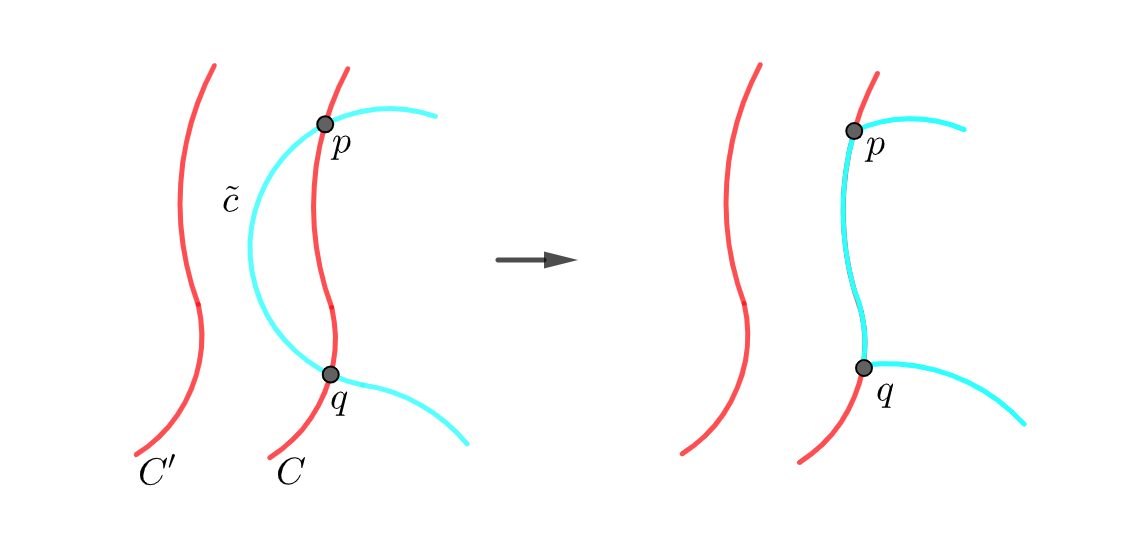}
\caption{}
\end{figure}
In this way, we obtain a new curve homotopically equivalent to $c$ which is entirely contained in $M_i \cup C$ and homotopically nontrivial when viewed as a curve in $M_i^{*}$ (where we contract all elements in $\tilde{c} \cap C$ to a point). Indeed, if it was homotopically trivial then also $c$ would be homotopically trivial in $(M_i')^{*}$. This implies that also $\mathcal{F}|_{M_i'}$ is irreducible. Hence we have shown that when we continuously deform the curves in $\mathcal{C}$, the new set of curves still gives us a Gardiner decomposition of $M$. \end{proof}
In order to use Gardiner's decomposition theorem for the proof of our main theorem, we want to show that the directional foliation on a dilation surface satisfies the assumptions of the decomposition theorem. 
\begin{prop}\label{foliationadmitsflow}
    Let $S$ be a dilation surface with directional foliation $\mathcal{F}_{\theta}$. Then $\mathcal{F}_{\theta}$ admits a continuous flow with finitely many singularities. 
\end{prop}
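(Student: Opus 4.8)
The plan is to realise $\mathcal{F}_{\theta}$ as the orbit foliation of the flow of a vector field that we build by hand. Concretely, we will produce a $C^{\infty}$ vector field $\tilde{X}$ on $S$ which is tangent to $\mathcal{F}_{\theta}$, which vanishes \emph{exactly} on the singular set $\Sigma$ — this set being finite by the definition of a dilation surface — and whose flow is therefore complete (as $S$ is compact) and continuous; the orbits of this flow will be the points of $\Sigma$ together with the leaves of $\mathcal{F}_{\theta}$. (Alternatively one could quote the general fact, see \cite{Camacho}, that a foliation with finitely many isolated saddle-type singularities on a closed surface is always the orbit foliation of a continuous flow, but the direct construction is short and instructive.)

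Away from $\Sigma$, recall that $\mathcal{F}_{\theta}$ is given by an atlas whose transition maps lie in $\mathrm{Aff}_{\mathbb{R}_{+}^{*}}(\mathbb{C})$ and in whose charts the leaves are straight lines of direction $\theta$. In each such chart the constant vector field $e^{i\theta}$, oriented in the forward direction, represents $\mathcal{F}_{\theta}$, and since the linear part of every transition map is a \emph{positive} real number, any two of these local fields differ on their overlap by multiplication by a positive smooth function. Hence, choosing a smooth partition of unity $\{\psi_i\}$ subordinate to a locally finite cover of $S\setminus\Sigma$ by such charts, the field $X:=\sum_i\psi_i\,e^{i\theta}$ (each summand read in the appropriate chart) is a well-defined $C^{\infty}$, nowhere-vanishing, forward-oriented vector field on $S\setminus\Sigma$ tangent to $\mathcal{F}_{\theta}$.

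The substantive point is the behaviour near a singularity $p\in\Sigma$. By definition a punctured neighbourhood of $p$ is a $k$-sheeted covering of $\mathbb{C}^{*}$; since the holonomy of a small loop around $p$ is an element of $\mathrm{Aff}_{\mathbb{R}_{+}^{*}}(\mathbb{C})$ of finite order it must be trivial, so this neighbourhood is affinely a punctured flat cone of angle $2\pi k$: there is a coordinate $u$ on a disk, with $p\leftrightarrow u=0$, for which the dilation charts are the local branches of $z=u^{k}$ and the leaves of $\mathcal{F}_{\theta}$ are the preimages of the direction-$\theta$ lines of the $z$-plane (so $p$ carries $2k$ separatrices). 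In the $z$-coordinate the forward field is the constant $e^{i\theta}$; transported to the $u$-coordinate it becomes $e^{i\theta}/(k\,u^{\,k-1})$, which blows up at $u=0$ and cannot even be extended there — this is exactly the obstruction that distinguishes the dilation (or translation) setting from a would-be ``unit-speed flow''. Multiplying by the smooth positive weight $|u|^{2(k-1)}$ cures it: we obtain
\[
Y_{p}\;:=\;\frac{e^{i\theta}}{k}\,\bar{u}^{\,k-1},
\]
a $C^{\infty}$ vector field on the disk, tangent to $\mathcal{F}_{\theta}$, forward-oriented at regular points, and vanishing \emph{precisely} at $u=0$ (to order $k-1\ge 1$, using that a singularity has cone angle $2\pi k$ with $k\ge 2$). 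On the overlap annulus $X$ and $Y_{p}$ are nonzero, tangent to the same foliation and forward-oriented, hence positive multiples of one another, so with a smooth cut-off $\beta$ which equals $1$ near each $p\in\Sigma$ and is supported in the union of the (disjoint) disks we may glue everything into a single global field
\[
\tilde{X}\;=\;(1-\beta)\,X\;+\;\sum_{p\in\Sigma}\beta\,Y_{p},
\]
which is $C^{\infty}$ on all of $S$, tangent to $\mathcal{F}_{\theta}$, and whose zero set is exactly $\Sigma$.

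It then remains to read off the flow. As $S$ is compact and $\tilde{X}$ is $C^{1}$, the flow $\phi$ of $\tilde{X}$ is complete and continuous, and its singular points are precisely the zeros of $\tilde{X}$, i.e.\ the finite set $\Sigma$. Since $\tilde{X}$ is nonvanishing and tangent to $\mathcal{F}_{\theta}$ off $\Sigma$, the orbit of any $x\notin\Sigma$ is an open connected subarc of the leaf $l_{x}$; that it is all of $l_{x}$ follows from the standard dichotomy — on a closed leaf $\phi$ is periodic (the speed is bounded below on a compact leaf), on a leaf of infinite length $\phi$ exhausts it (the speed is bounded above by $\sup_{S}|\tilde{X}|<\infty$), and a leaf of finite forward length must accumulate on some $p\in\Sigma$, which the flow cannot reach in finite time because $p$ is a fixed point ($\phi^{t_{0}}(x)=p$ would force $x=\phi^{-t_{0}}(p)=p$). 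Conversely every leaf arises this way, so the leaves of $\mathcal{F}_{\theta}$ are exactly the non-singular orbits of $\phi$, and $\mathcal{F}_{\theta}$ admits a continuous flow with finitely many singularities. The one delicate step, as indicated above, is the local analysis at $\Sigma$: the natural parametrisation of $\mathcal{F}_{\theta}$ degenerates at the cone points, and the whole construction hinges on rescaling it by a weight that makes it not merely bounded but \emph{zero} at each singularity — which is what simultaneously makes $\tilde{X}$ continuous there and keeps the singularities at infinite flow-time.
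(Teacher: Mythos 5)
Your overall strategy is the same as the paper's (the paper's proof is a two-line sketch: take a vector field tangent to $\mathcal{F}_{\theta}$ and pass to its integral curves), and your local analysis at the singularities is exactly the point the paper glosses over, so the extra care is welcome. But there is one genuine gap in that analysis: the claim that ``the holonomy of a small loop around $p$ is an element of $\mathrm{Aff}_{\mathbb{R}_{+}^{*}}(\mathbb{C})$ of finite order, hence trivial.'' Nothing forces this element to have finite order. The condition in the definition of a dilation surface is purely topological (the punctured neighbourhood is \emph{homeomorphic} to a $k$-sheet covering of $\mathbb{C}^{*}$); the holonomy representation can send the loop around $p$ to any $z \mapsto \lambda z + v$, and dilation surfaces do have singularities with linear holonomy $\lambda \neq 1$ (in the paper's own polygonal construction, the holonomy around a vertex-singularity is the product of the dilation factors of the sectors meeting there, which need not be $1$; only the product over \emph{all} singularities is forced to be trivial, since that loop is a product of commutators). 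When $\lambda \neq 1$ there is no uniformizing coordinate with $z = u^{k}$, so the formula $Y_{p} = e^{i\theta}\bar{u}^{\,k-1}/k$ — and with it the entire local construction — does not apply at such a point. (The computation itself is correct in the trivial-holonomy case; it is the standard order-$(k-1)$ zero one gets for the horizontal foliation of $u^{2(k-1)}\,du^{2}$.)

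The gap is repairable without the normal form, and in a way that also covers the case $k=1$ with nontrivial holonomy: fix an auxiliary smooth Riemannian metric $g$ on the compact surface $S$, let $X$ be the $g$-unit forward tangent field to $\mathcal{F}_{\theta}$ on $S\setminus\Sigma$ (this is essentially the paper's field), and set $\tilde{X} = \rho\, X$ where $\rho \geq 0$ is smooth, vanishes exactly on $\Sigma$, and vanishes there to order at least $2$ (e.g.\ $\rho = d_{g}(\cdot,\Sigma)^{2}$ near $\Sigma$). Then $|\tilde{X}|_{g} = \rho \to 0$ at $\Sigma$, so $\tilde{X}$ extends continuously by zero, it is Lipschitz at $\Sigma$ (giving uniqueness of integral curves there, which you need for the flow property — mere continuity of the field would not suffice), and your concluding argument that the non-singular orbits are exactly the leaves goes through verbatim. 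With that substitution the proof is complete.
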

\begin{proof} In the following, we use the definition of a flow via the \textit{intergral curves} of a vector field (see \cite{Camacho}, p. 28). We assign to each point $p \in S \backslash \Sigma$ a vector in the tangent space to $S$ at $p$, such that this vector has length one and points in the direction of the forward leaf through $p$. This defines a smooth, differentiable vector field with finitely many singular points. The \textit{integral curves} of this vector field then define a continuous flow on $S$ whose orbits are exactly the leaves of $\mathcal{F}$. 
    \end{proof}
\subsection{Gardiner decomposition of the Disco surface} 
We want to give an example of the Gardiner decomposition on the Disco surface. In the last chapter, we have seen examples of directions for which the directional foliation on the Disco surface contains two non-trivially recurrent leaf closures. 

\begin{prop} Let $\mathcal{F}_{\theta}$ be a directional foliation on the Disco surface $D$ such that $\theta \in S^1$ satisfies the assumptions of Proposition \ref{prop_disco_surface}. Then the Gardiner decomposition with respect to $\mathcal{F}_{\theta}$ is the closed curve $c$ that separates the blue and red subsurfaces $D_1$ and $D_2$. 
\begin{figure}[H]
\centering
\includegraphics[width=0.8\textwidth]{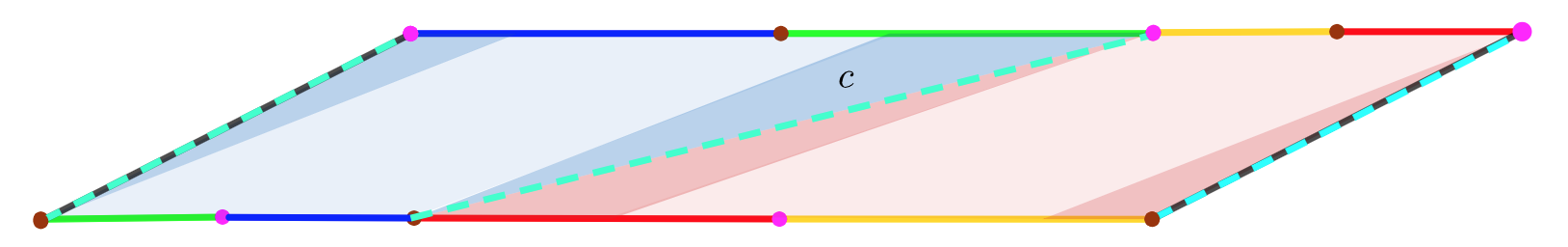}
\caption{The curve $c$ cuts the Disco Surface into two genus one sub-surfaces.}
\end{figure}
\end{prop}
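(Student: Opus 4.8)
The plan is to verify that $\mathcal{C}=\{c\}$ satisfies conditions (1) and (2) of Gardiner's decomposition theorem, using the description of $\mathcal{F}_\theta$ on the Disco surface obtained in the previous section. Recall that for $\theta$ as in Proposition~\ref{prop_disco_surface} we have $D\setminus c=D_1\sqcup D_2$ with $D_1,D_2$ the two genus-one subsurfaces of the new polygonal representation, that $D_1$ carries a unique non-trivially recurrent leaf closure $C_\theta^+=\Omega\times l$ attracting every leaf entering $D_1$, that $D_2$ carries a unique, repelling non-trivially recurrent leaf closure $C_\theta^-$, and that $\mathcal{F}_\theta$ has no closed leaves (on $D_1$ and $D_2$ the foliation is Cantor-like, and every regular leaf of $D-C_\theta^+-C_\theta^-$ is attracted by $C_\theta^+$, hence not closed). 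The $180^\circ$ rotation exchanging $D_1$ and $D_2$ and conjugating $\mathcal{F}_\theta$ with $\mathcal{F}_{-\theta}$ will let us treat the two components symmetrically.

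First I would dispatch the homotopical nontriviality of $c$ and condition (1). The curve $c$ bounds on one side the genus-one, one-boundary subsurface $D_1$ (and on the other side $D_2$), so it bounds no disc in $D$ and is not null-homotopic. For (1), every non-trivially $\alpha$- or $\omega$-recurrent point lies on a non-trivially recurrent leaf, and by the analysis following Proposition~\ref{prop_disco_surface} the only non-trivially recurrent leaf closures of $\mathcal{F}_\theta$ are $C_\theta^+\subset D_1$ and $C_\theta^-\subset D_2$; since $D_i\cap c=\emptyset$, the curve $c$ is disjoint from $C_\theta^+\cup C_\theta^-$ and hence contains no non-trivially recurrent point. (If the explicit separating curve of the figure happened to meet a singularity, a small perturbation keeping it off $C_\theta^+\cup C_\theta^-$ removes the issue.)

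The core of the argument is condition (2): I would show that $\mathcal{F}_\theta|_{D_1}$ is irreducible, whence $\mathcal{F}_\theta|_{D_2}$ follows by symmetry. The frontier of $D_1$ is $c$, which by the previous step contains no non-trivially $\alpha$- or $\omega$-recurrent point, so it remains to prove that $\mathcal{F}_\theta|_{D_1^*}$ is irreducible, i.e. that $C_\theta^+$ meets every homotopically nontrivial closed curve of $D_1^*$ in a non-trivially recurrent point. Using that $\Omega$ is $f$-invariant and that the $f$-orbit of every point of $\Omega$ is dense in $\Omega$ (Proposition~\ref{prop_disco_surface}), every point of $C_\theta^+$ lies on a non-trivially recurrent leaf; hence any closed curve meeting $C_\theta^+$ automatically meets it in a non-trivially recurrent point, and it suffices to show that no homotopically nontrivial closed curve of $D_1^*$ is disjoint from $C_\theta^+$.

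For this last point I would appeal to the dynamical picture of the complement. Any leaf of $\mathcal{F}_\theta|_{D_1}$ not contained in $C_\theta^+$ has $\omega$-limit set $C_\theta^+$, so its successive intersections with the transversal $L$ form an $f$-orbit accumulating on $\Omega$ and therefore lying in $L-\Omega=\bigcup_{n\ge 0}f^n(I)$; pulling back, the leaf meets $I$. Thus $D_1-C_\theta^+$ is swept out by the finitely many flow strips issued from the finitely many intervals comprising $I$, each of which, parametrised by its base interval and the flow time, is a simply connected strip; capping the single end of $D_1$ collapses the frontier side of these strips to the puncture of $D_1^*$, so every connected component of $D_1^*-C_\theta^+$ is a disc or a once-punctured disc, and a loop around that puncture is contractible in $D_1^*$. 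Consequently every closed curve of $D_1^*$ disjoint from $C_\theta^+$ is null-homotopic, so $\mathcal{F}_\theta|_{D_1^*}$ is irreducible, hence so is $\mathcal{F}_\theta|_{D_1}$ and, by the rotational symmetry, $\mathcal{F}_\theta|_{D_2}$; therefore $\mathcal{C}=\{c\}$ is a Gardiner decomposition of $D$. I expect this last step to be the main obstacle: because $C_\theta^+$ is a Cantor-like set rather than a submanifold, one cannot argue via a tubular neighbourhood and must instead control the topology of the complement through the flow, while also checking that the separatrices of the two singularities — which by the standing assumption avoid the sets $f^n(I)$ — do not interfere with the strip decomposition.
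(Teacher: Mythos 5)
Your proposal is correct and follows essentially the same route as the paper: condition (1) is checked by observing that $c$ is disjoint from the two non-trivially recurrent leaf closures $C_\theta^{\pm}$, and condition (2) by showing that any closed curve in $D_1^*$ avoiding $C_\theta^+$ is trapped in the disjoint flow-strips issued from the iterates of $I$ and is therefore null-homotopic. Your write-up is in fact more careful than the paper's on the irreducibility step (the paper only asserts that such a curve ``cannot be closed''), in particular in justifying that every point of $C_\theta^+$ is non-trivially $\omega$-recurrent and in making the topology of the complement explicit.
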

\begin{proof} The two non-trivially recurrent leaf closures $C_{\theta}^+$ and $C_{\theta}^-$ are fully contained in the light blue, respectively light red region. The curve $c$ only intersects this region at the singularities of the Disco surface $D$, hence $c$ does not contain a non-trivially  $\alpha$- or $\omega$-recurrent point of $\mathcal{F}_{\theta}$. This proves the first part.  For the second part, we want to show that $\mathcal{F}_{\theta}|_{D_1}$, and hence $\mathcal{F}_{\theta}|_{D_2}$, are irreducible. This is clear as any closed curve on $D_1$ will have to intersect some image of $I$ (see Proposition \ref{prop_disco_surface}) and after this point it is contained in the iterates of this image as it cannot cross $\Omega$, however these iterates are all disjoint so the curve cannot be closed.
\end{proof}

Hence, we are now equipped with Gardiner's decomposition theorem, our most important tool to prove our main theorem. The next chapter is dedicated fully to our main theorem and its corollary for affine interval exchange maps.

\section{Structure Theorem for Foliations on Dilation Surfaces}
\label{chapter4}

\subsection{Statement of theorem} In this chapter, we prove our main theorem and show its application to affine interval exchange maps. Given a directional foliation on a dilation surface, our main theorem allows us to decompose the surface into different subsurfaces that only exhibit one type of dynamical behaviour. Furthermore, we can characterize the foliations on these subsurfaces by semi-conjugating the first return map on a transversal segment to an IET. We restate the theorem below:

\begin{namedthm*}{Theorem \ref{maintheorem}} Given a directional foliation $\mathcal{F}_{\theta}$ on any dilation surface $S$, there exists a decomposition of $S$ into subsurfaces that either have no recurrent leaf or are in one of the following cases:
\begin{enumerate}
\item Flat cylinders where the foliation is completely periodic,
\item Affine cylinders where the foliation is Morse-Smale,
\item Minimal subsurfaces where the foliation is minimal,
\item Subsurfaces where the foliation is Cantor-like.
\end{enumerate}
In case (3) and (4), the first return map on any finite union of segments transversal to $\mathcal{F}_{\theta}$ that intersects a non-trivially recurrent leaf is semi-conjugated to a minimal IET. 
\end{namedthm*}

%mention Marmi, Moussa and Yoccoz somewhere

The next three sections are dedicated to the proof of this theorem. The main idea behind the proof is to use Gardiner's decomposition theorem to decompose a given dilation surface into subsurfaces on which the foliation is either trivially or non-trivially recurrent. Before we proceed to the proof, we first want to discuss these two types of subsurfaces. 

\subsection{Subsurfaces with trivial recurrence} Let $S_i$ be a subsurface of a dilation surface $S$ that contains only trivially recurrent leaves, then either there exists at least one closed leaf or no recurrent leaf at all. We have seen in Chapter 4 that that any closed leaf is contained either in a flat or in an affine cylinder, depending on the linear holonomy of the closed leaf. There are only finitely many such cylinders in $S_i$ by the compactness of $S$. Hence, we can decompose $S_i$ further into finitely many flat cylinders (1) or affine cylinders (2) or components that have no recurrent leaf at all.  

\subsection{Subsurfaces with nontrivial recurrence} We now consider the case where $S_i$ is a subsurface that contains a unique non-trivially recurrent leaf closure.

\begin{prop}\label{mainproposition} Let $S_i$ be a subsurface of a dilation surface with directional foliation $\mathcal{F}_{\theta}$ that contains a unique non-trivially recurrent leaf closure $\Omega = \overline{\gamma}$ where $\gamma$ is a non-trivially recurrent leaf. Let $L$ be a finite union of segments transversal to $\mathcal{F}_{\theta}$ that intersects $\gamma$. Then the first return map $f:L \rightarrow L$ is semi-conjugated to a minimal IET. Furthermore, $L\hspace{0.05cm}\cap\hspace{0.05cm}\Omega$ is either a Cantor set or a finite union of closed intervals. \end{prop}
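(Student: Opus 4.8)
The plan is to build the semi-conjugacy explicitly by "blowing down" a suitable collection of intervals in $L$ to points, in the spirit of \cite{gutierrez}. First I would set up the first return map $f : L \to L$ and record its basic structure: since the directional foliation on a dilation surface is locally modelled on $\mathrm{Aff}_{\mathbb{R}_+^*}(\mathbb{C})$, the map $f$ is (where defined) an affine interval exchange transformation on the finite union of segments $L$, with finitely many discontinuities coming from the singularities of $S$ and the endpoints of the segments of $L$. The non-trivially recurrent leaf $\gamma$ meets $L$ in a set $K := \overline{\{f^n(p) : n \in \mathbb{Z}\}} \cap L$ for $p \in \gamma \cap L$, which is closed, $f$-invariant, and (by non-triviality) perfect; the key dichotomy to establish is that $K$ is either a Cantor set or all of $L$ (a finite union of closed intervals). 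This follows by looking at the complement: $L \setminus K$ is a countable union of open intervals permuted by $f$, and either this complement is empty (minimal case) or dense (then $K$ is totally disconnected, hence Cantor, since it is also closed and perfect). Ruling out intermediate cases uses that a maximal open interval on which all forward iterates of $f$ are defined and disjoint from $K$ has iterates that are pairwise disjoint subintervals of the finite-length set $L$, so their lengths tend to zero and they cannot accumulate on a nondegenerate interval that meets $K$ in its interior.

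Next, with the dichotomy in hand, in the Cantor case I would define $h$ by collapsing each complementary interval (each "gap" of $K$ in $L$, together with its $f$-orbit) to a single point. Concretely: enumerate the gaps, note that the family of gaps is $f$-invariant (as a family), and define an equivalence relation on $L$ whose classes are the closures of gaps and singletons of the remaining points; the quotient $\bar{L} := L / \!\sim$ is, by the standard fact that collapsing the gaps of a Cantor set in an interval yields an interval, again a finite union of closed intervals, and $f$ descends to a well-defined map $T$ on $\bar{L}$. The quotient map $h : L \to \bar L$ is continuous, monotone and surjective by construction, and $h \circ f = T \circ h$ holds because $f$ maps gaps to gaps and is monotone (orientation-preserving on each continuity interval). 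One then checks that $T$ has no wandering interval — a nondegenerate interval in $\bar L$ pulls back to a nondegenerate interval in $L$ meeting $K$ in a set of positive "relative" size, which cannot be wandering since $\gamma$ is recurrent — and that $T$ is minimal on the relevant piece, because the orbit closure of $h(\gamma \cap L)$ is all of $\bar L$. Finally I would argue $T$ is conjugate to an IET rather than merely an AIET: since $T$ has no wandering intervals and is minimal, a classical result (Denjoy-type / the fact that a minimal GIET without wandering intervals is conjugate to an IET, cf. the discussion around \cite{MarmiMoussaYoccoz}) gives a minimal IET semi-conjugate (here even conjugate) to $T$; composing the two semi-conjugacies yields the desired semi-conjugacy from $f$ to a minimal IET. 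In the minimal case $K = L$ the map $h$ is the identity on its support and the same Denjoy/normalization argument applies directly to $f$ itself.

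The main obstacle I expect is the step producing the IET from the blown-down map $T$: one must show $T$ is genuinely conjugate (or semi-conjugate) to a standard IET and that this IET is minimal, which requires ruling out wandering intervals for $T$ and invoking the right version of the Denjoy/Poincaré classification for minimal generalized interval exchanges — the subtlety is that $T$ a priori is only a minimal GIET (piecewise affine with finitely many branches), and the passage to an IET is where the structure of the recurrent dynamics on the Cantor set $K$ must be used carefully. A secondary technical point is making the "blow down one interval from each $f$-orbit of gaps" procedure precise when there are infinitely many gap-orbits and when some gaps are adjacent, so that the quotient is genuinely a finite union of closed intervals and $h$ is well-defined and continuous; here I would lean on the explicit combinatorics of gaps of a Cantor set inside an interval and on the finiteness of the number of discontinuities of $f$, which bounds the number of distinct $f$-orbits of "essential" gaps.
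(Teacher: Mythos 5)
Your proposal follows essentially the same route as the paper: collapse the gaps of $\Omega\cap L$ to points, show the induced map on the quotient is injective, continuous off finitely many points and has a dense positive semi-orbit, and invoke a rigidity result to conjugate it to an IET, with minimality then coming from Keane's dichotomy for IETs with a dense orbit. The ``Denjoy-type'' black box you flag as your main obstacle is precisely the paper's Key Lemma \ref{keylemma} of Gutierrez (proved there via an invariant-measure construction), and the technicality you defer --- that the induced map is genuinely defined off only finitely many gap-classes, since $f$ need not be defined or continuous on an entire gap --- is handled in the paper by defining the quotient dynamics via limits along sequences in $\gamma\cap L$ together with a geometric count of singularities (Lemma \ref{lem6.5}, Proposition \ref{prop6.7}).
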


\begin{rem} The general idea behind the proof of this proposition originates from the proof of a structure theorem for continuous flows established by Carlos Gutierrez in \cite{gutierrez} (see 3.3 to 3.9). In order to apply his ideas onto the case of dilation surfaces, Lemma \ref{lem6.5}, \ref{prop6.7}, \ref{lem6.9} and \ref{lem6.10} have been established by the author of this thesis independently. \end{rem}

\noindent \underline{Outline of the proof.} We first construct a quotient space $\mathcal{H}(L)$ from $L$ by collapsing intervals in the complement of $\Omega \cap L$ to a point. We then explicitly construct a map $f_{L}: \mathcal{H}(L) \rightarrow \mathcal{H}(L)$ that is semi-conjugated to $f:L \rightarrow L$ via the quotient map. We then show that the map $f_L$ satisfies the assumptions of the following key lemma, stated as Lemma 3.8 in \cite{gutierrezkeylemma}. We have attached a proof of this lemma in the appendix. 

\begin{lem}[Key Lemma]\label{keylemma} Let $I$ be an interval and $T: I \rightarrow I$ be a continuous injective map defined everywhere except possibly at finitely many points. If $T$ has a dense positive semi-orbit, then $T$ is conjugated to an IET.
\end{lem}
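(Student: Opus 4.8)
The plan is to show that $T$, though only piecewise continuous, already carries the rigid recurrence structure of a \emph{minimal} interval exchange, and then to realise the conjugacy as the cumulative distribution function of a $T$-invariant measure.

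First I would record the elementary structure of $T$. Being injective and continuous off a finite set, $T$ is strictly monotone on each of finitely many subintervals; enlarging the exceptional set to include the turning points, I may assume $T$ restricts to a homeomorphism of each such interval $I_j$ onto an open subinterval of $I$, and since the forward semi-orbit of the distinguished point $x_0$ is dense, the image of $T$ is dense hence co-finite, so $T$ is a bijection of $I$ up to finitely many points and $T^{-1}$ is a map of the same type. Moreover $T$ has no periodic orbit: an orientation-preserving periodic orbit is attracting from at least one side, hence possesses a nonempty open basin; the orbit of $x_0$ must enter this basin, is absorbed into the periodic orbit, and then $\overline{O^{+}(x_0)}$ is countable, contradicting $\overline{O^{+}(x_0)}=I$. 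A variant of this argument applied to iterates shows that no branch of $T$ reverses orientation. (A few degenerate cases, such as a basin escaping toward an open end of $I$, require a little extra care.)

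Next I would exclude wandering intervals and, more importantly, upgrade the single dense forward semi-orbit to minimality. If $V$ is an open interval whose forward iterates are all defined and pairwise disjoint, then the orbit of $x_0$, once it first enters $V$, is trapped in $W:=\bigcup_{k\ge 0}T^{k}(V)$, so density forces $\overline W=I$ and hence makes $C:=I\setminus W$ nowhere dense; but by injectivity $T^{-1}(V)$ is a nonempty open interval disjoint from every $T^{k}(V)$, so $T^{-1}(V)\subset C$, a contradiction. The same principle drives minimality: if $\overline{O^{+}(y)}=K\subsetneq I$ for some $y$, then since $T(K)\subseteq K$ the orbit of $x_0$ can never enter $K$, so $I\setminus K$ is dense and $K$ is nowhere dense; a component of $I\setminus K$ must then (no wandering intervals, no periodic orbits, finitely many discontinuities) lie in a cycle of components whose endpoints attract the orbit of $x_0$ — impossible.

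Finally I would construct the conjugacy. A weak-$*$ limit $\mu$ of the empirical averages $\tfrac1N\sum_{n<N}\delta_{T^{n}x_0}$ is a $T$-invariant Borel probability (the finitely many discontinuities being handled in the usual way). Such a $\mu$ is non-atomic — an atom at $p$ would, by injectivity and the absence of periodic orbits, have a finite grand orbit running from a point with no $T$-preimage to a point outside the domain of $T$, and applying invariance to the singleton at its left end assigns it measure $0$ — and of full support, since $\operatorname{supp}\mu$ is a nonempty closed $T$-invariant set and $T$ is minimal. Hence $h(x):=\mu\big(I\cap(-\infty,x]\big)$, suitably normalised, is a homeomorphism of $I$, and for $x$ in a branch $I_j$ the relation $\mu([a,b])=\mu(T([a,b]))$ coming from invariance says exactly that $h\circ T\circ h^{-1}$ maps $h(I_j)$ onto $h(T(I_j))$ by a translation; thus $h\circ T\circ h^{-1}$ is a bijective piecewise translation with finitely many branches, i.e. an IET, and $h$ is the desired conjugacy. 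The main obstacle is the recurrence half of the argument — above all the passage from a single dense forward semi-orbit to genuine minimality, which is precisely what forces $\operatorname{supp}\mu=I$ and so makes $h$ an honest conjugacy rather than a mere semi-conjugacy; the Denjoy phenomenon, where no forward semi-orbit is dense, shows this step cannot be obtained by purely soft means and must exploit that the dense semi-orbit is a forward orbit together with the injectivity and the finiteness of the discontinuity set.
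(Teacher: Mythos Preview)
Your approach is correct in outline and genuinely different from the paper's. You build a $T$-invariant probability measure by Krylov--Bogolyubov, argue it is non-atomic and (via minimality) of full support, and take its distribution function as the conjugacy; this is the classical Denjoy-style argument transplanted to the piecewise setting. Gutierrez's proof, reproduced in the paper, never establishes minimality nor an invariant measure directly. Instead it puts the atomic weights $\mu_\beta(p_i)=\beta(1-\beta)^{i-1}$ on the dense semi-orbit, so that $\mu_\beta(T(A))=(1-\beta)\mu_\beta(A)$, and proves by a first-return covering argument that $\inf_{0<\beta\le 1/2}\mu_\beta((a,b))>0$ for every nonempty arc; a diagonal limit of $\mu_\beta((\lambda_0,p_i))$ as $\beta\to 0$ then gives the conjugacy $h$, whose injectivity and continuity are read off from this estimate and from the disjoint-iterates trick. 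In effect the paper bypasses your hardest step---upgrading one dense forward semi-orbit to minimality---and obtains minimality only \emph{a posteriori} via Keane, whereas you need it \emph{a priori} to force $\operatorname{supp}\mu=I$. Your route is more conceptual and closer to the circle-homeomorphism template; the paper's is more self-contained and avoids the technical wrinkle that Krylov--Bogolyubov for a map with discontinuities requires knowing in advance that the limit measure does not charge the discontinuity set (your ``handled in the usual way'' hides a small circularity, though one that can be unwound). One caveat on both sides: your claim that no branch can reverse orientation is not justified by the periodic-orbit argument you sketch, and the paper's proof likewise only yields that $hTh^{-1}$ is a piecewise isometry, so strictly speaking the conclusion should be ``IET with flips'' unless orientation-preservation is assumed---which it is in the intended application to dilation surfaces.
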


Using this lemma, we can show that $f_L$ is conjugated to a minimal IET and hence $f$ is semi-conjugated to a minimal IET. A simple argument using the non-trivial recurrence of $\gamma$ will further give us that $L\hspace{0.05cm}\cap\hspace{0.05cm}\Omega$ is either a Cantor set or a finite union of closed intervals. 

To set up for the proof of Proposition \ref{mainproposition}, we want to introduce the following definition. 

\begin{defs}\label{maindef} For $a,b \in L$ we say that $a,b$ \textit{belong to the same interval of continuity} if $a,b$ belong to the same connected component of $L$ and $f|_{[a,b]}$ is continuous (with respect to the topology induced by the complex plane via the atlas). 
\end{defs}

We define the set $\mathcal{H}(L)$ as the set of all intervals $[a,b] \in L$ such that $[a,b]$ is the closure of a connected component of $L - \Omega$ or $a=b$ and $a$ does not belong to the closure of any connected component of $L - \Omega$. We claim that because $\gamma$ is non-trivially recurrent, this set forms a partition of $L_i$. Indeed, two intervals $[a,b],[c,d] \in \mathcal{H}(L)$ cannot intersect in a point, as this would mean that $\gamma \cap L$ has an isolated point. We denote by $h:L \rightarrow \mathcal{H}(L)$ the quotient map. Note that as $\mathcal{H}(L)$ is a nontrivial partition of $L$ it inherits a quotient topology from $L$. We claim that while $\gamma$ accumulates at the endpoints of each interval closure in $L - \Omega$, it will never actually intersect such an interval. 

\begin{lem}\label{lem6.5} For $[a,b] \in \mathcal{H}(L)$ where $a\neq b$ we have that $\gamma \cap [a,b] = \emptyset$. \end{lem}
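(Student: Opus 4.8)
The statement to prove is Lemma \ref{lem6.5}: for $[a,b] \in \mathcal{H}(L)$ with $a \neq b$, we have $\gamma \cap [a,b] = \emptyset$. By construction, such an $[a,b]$ is the closure of a connected component of $L - \Omega$, so the open interval $(a,b)$ is disjoint from $\Omega = \overline{\gamma}$; in particular $(a,b) \cap \gamma = \emptyset$. The only thing left to rule out is that one of the endpoints, say $a$, lies on $\gamma$ itself. So the whole content of the lemma is: \emph{the endpoints of a complementary interval of $\Omega$ in $L$ do not lie on the recurrent leaf $\gamma$.}

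\textbf{Main argument.} I would argue by contradiction: suppose $a \in \gamma$, with $(a,b)$ a component of $L - \Omega$. Since $\gamma$ is non-trivially recurrent, $a$ is both $\omega$- and $\alpha$-recurrent, so $\gamma$ returns to every neighbourhood of $a$ in $L$ infinitely often, in both time directions. The key point is that the first-return map $f$ (an AIET, hence piecewise a local homeomorphism of $L$ onto its image) is, near $a$, an orientation-preserving affine map on a one-sided neighbourhood $[a, a+\varepsilon)$ — or at worst $a$ is an endpoint of finitely many intervals of continuity, each of which $f$ maps homeomorphically onto a one-sided interval at $f(a)$. Now $f(a) \in \gamma \subset \Omega$, and I would use the invariance of $\Omega$ under $f$ (it is a leaf closure, so the forward and backward flow preserve it) together with the fact that $f$ near $a$ carries the complementary interval $(a,b)$ (or the part of it inside an interval of continuity) onto a complementary interval of $\Omega$ at $f(a)$: indeed $f$ restricted to an interval of continuity is a homeomorphism onto an open subinterval of $L$ carrying $\Omega$-points to $\Omega$-points and non-$\Omega$-points to non-$\Omega$-points (because $f^{-1}$ is also defined there and is continuous, and $\Omega$ is $f$-invariant). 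Iterating, the orbit of the complementary interval $(a,b)$ under $f$ is an infinite family of complementary intervals of $\Omega$. These are pairwise disjoint — if two iterates $f^n((a,b))$ and $f^m((a,b))$ overlapped, pulling back gives $(a,b) \cap f^{m-n}((a,b)) \neq \emptyset$, and since both are complementary intervals of $\Omega$ they would have to coincide, forcing $a$ to be a periodic point of $f$, contradicting that $\gamma$ is a non-closed (non-trivially recurrent) leaf. So we get infinitely many disjoint open intervals $f^n((a,b)) \subset L$, each an interval of the complement of $\Omega$, and every one of them has $f^n(a) \in \gamma$ as an endpoint. But recurrence of $a$ means the points $f^{n_k}(a)$ accumulate back on $a$; since each $f^{n_k}(a)$ is an endpoint of a nondegenerate complementary interval $f^{n_k}((a,b))$ lying entirely on one side and these intervals are disjoint with total length bounded by $|L|$, their lengths tend to $0$, so the approach of $\gamma$ to $a$ through these endpoints is "blocked" on one side by shrinking gaps — and this is incompatible with $a$ being genuinely recurrent \emph{from that side}, because a recurrent return to a neighbourhood of $a$ that is not a return to a complementary-interval endpoint would have to land in $(a,b) \cup \Omega$, and $(a,b) \cap \gamma = \emptyset$. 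One has to be slightly careful to also handle the side of $a$ opposite to $(a,b)$ and the finitely many intervals of continuity meeting at $a$; but the same disjointness-plus-shrinking argument applies on each, giving the contradiction.

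\textbf{The expected obstacle.} The delicate part is the bookkeeping around the endpoints: $f$ may fail to be continuous exactly at $a$, or $a$ may be an endpoint of several intervals of continuity, and the one-sidedness of $(a,b)$ at $a$ must be tracked through the iteration (an affine map with positive slope preserves the left/right orientation, which is what makes "$(a,b)$ is the complementary interval immediately to one side of $f^n(a)$" stable). I would handle this by fixing, once and for all, a small one-sided neighbourhood of $a$ on which $f$ is a homeomorphism, noting that each forward iterate stays a homeomorphism on the relevant piece until it possibly hits a discontinuity point of the AIET, and using that there are only finitely many such discontinuity points (so they can be absorbed into the finite exceptional set) — then the clean statement "orbit of a complementary interval is an infinite disjoint family of complementary intervals with lengths $\to 0$" goes through, and the contradiction with recurrence of $a$ is immediate. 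This is also essentially why the hypothesis "$\gamma$ non-trivially recurrent" (rather than merely recurrent) is used: it forbids $a$ from being periodic, which is what keeps the family of iterates genuinely infinite and disjoint.
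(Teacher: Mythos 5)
Your reduction is correct --- since $(a,b)$ is a connected component of $L-\Omega$ and $\gamma\subset\Omega$, the only issue is whether an endpoint, say $a$, can lie on $\gamma$ --- and your observation that the forward images $f^n((a,b))$ form an infinite, pairwise disjoint family of complementary intervals of $\Omega$ with lengths tending to $0$ is also correct. But the final contradiction does not close. Recurrence of $a$ only forces $\gamma\cap L$ to accumulate at $a$ from the side opposite to $(a,b)$, and nothing in the ``blocked by shrinking gaps'' picture forbids that: a closed set with empty interior is routinely accumulated from one side only at a point that is the endpoint of a complementary gap on the other side. In fact, every ingredient you actually use --- $f$ piecewise an orientation-preserving homeomorphism with finitely many discontinuities, $\Omega$ closed and invariant, complementary intervals mapped to complementary intervals, disjointness and summability of their lengths --- is also present in a Denjoy-type example (a circle homeomorphism, or a generalized IET obtained by blowing up an orbit of a minimal rotation), where the endpoints of the wandering gaps \emph{are} non-trivially recurrent points lying in the minimal Cantor set. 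So no argument built from those ingredients alone can prove the lemma; concretely, you never exhibit a point of $\gamma$ inside $(a,b)$, which is the only way to contradict $(a,b)\cap\Omega=\emptyset$.

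The missing idea --- the heart of the paper's proof --- is to transport the one-sided recurrence \emph{across} $a$ using the affine structure. One takes a return point $a_d\in\gamma\cap L$ at distance $d$ from $a$ on the side opposite to $(a,b)$ with $f^n(a_d)=a$, and shows, by covering the arc of $\gamma$ from $a_d$ to $a$ with dilation charts as in Proposition \ref{prop_lin_hol}, that $f^n$ is an orientation-preserving \emph{affine} homeomorphism on the whole segment $[a_d,a]$. Consequently $f^n(a)$ is a point of $\gamma$ at distance $\lambda d$ from $a$ on the $(a,b)$ side, and letting $d\to0$ produces points of $\gamma$ inside $(a,b)$ --- the desired contradiction. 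This controlled local distortion of the return map (linear holonomy) is precisely the input that distinguishes dilation surfaces from the Denjoy situation, and it is absent from your argument; your closing paragraph about bookkeeping at discontinuity points addresses a real but secondary issue, not this one.
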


\begin{proof} Fist of all note that $\gamma$ can only intersect $[a,b]$ at the endpoints by definition of $\mathcal{H}(L)$. W.l.o.g assume that $\{ a \} \in \gamma \cap [a,b]$. As $\gamma$ is non-trivially recurrent, there exist $(a_k)_{k\in \mathbb{N}} \in L \cap \gamma$ which accumulate to $a$ from the left as $k \rightarrow \infty$. (if $a$ is a left endpoint of a connected component of $L$, then simply join a small transversal segment to this left endpoint of $L$ to obtain an extension $L'$ and consider $(a_k)_{k\in \mathbb{N}} \in L' \cap \gamma$). 
Choose a chart $(U_1,\phi_1)$ at $a$ and define the distance between $a,b \in U_1$ to be the Lebesgue-distance between $\phi_1(a)$ and $\phi_1(b)$. For small $d>0$, let $a_d \in \cup_{k=1}^{\infty} a_k$ with distance at most $d$ from $a$. 
\begin{figure}[H]
\centering
\includegraphics[width=0.6\textwidth]{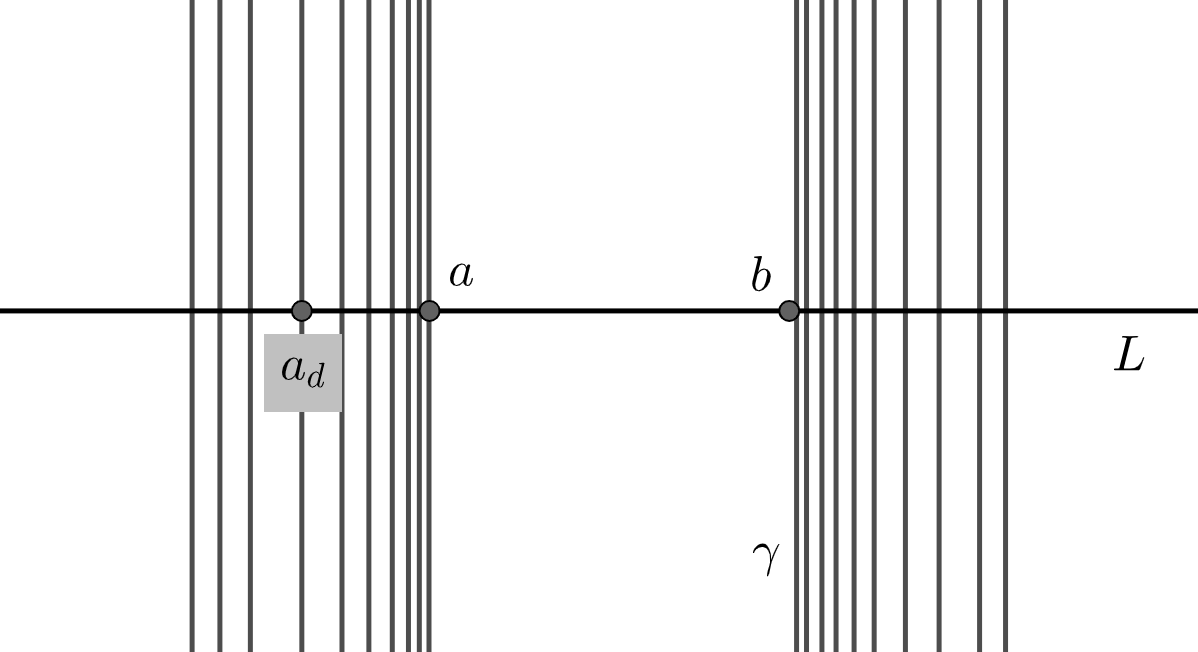}
\caption{}
\end{figure}
Now $a_d$ and $a$ both belong to $\gamma$, hence we can assume that there exists $n \in \mathbb{N}$ such that $f^n(a_d)=a$ (the case when $n \in \mathbb{Z}$ follows analogously). By the regularity of $\gamma$, we can choose $d$ small enough such that there is a cover of the arc of $\gamma$ between $a_d$ and $a$ with charts $(U_1, \phi_1), \dots, (U_l, \phi_l)$ where $a_d,a \in U_1$ and $f^n(a_d),f^n(a) \in U_l$ and the intersection of the forward half-leaves through $[a_d,a]$ and the backward half-leaves through $[f^n(a_d),f^n(a)]$ is entirely covered by the charts. Note that for small $d$, $f^n(a)$ is to the right of $f^n(a_d) = a$ because orientation is preserved in the plane under $\mathcal{F}_{\theta}$ and because the leaves through $a_d$ and $a$ never "split up" due to a singularity. 

From the proof of Proposition \ref{prop_lin_hol} in Chapter 3 we can deduce that the Lebesgue-distance  between the images of $f^n(a_d)=a$ and $f^n(a)$ with respect to $\phi_l$ is equal to $\lambda \cdot d$, where $\lambda$ is the product of the affine factors of the transition maps between $(U_1, \phi_1), \dots, (U_l, \phi_l)$. Hence, by letting $d \rightarrow 0$, we have that $a$ is also accumulated on the right by points in $\gamma \cap L$ which is a contradiction as $a\neq b$, i.e $a$ should be isolated from the right. 
\end{proof}

%Since $\gamma$ is a regular trajectory, for any $0 \leq k \leq n-1$ we can find a ball $B_{\epsilon_k}$ of radius $\epsilon_k >0$ around $f^k(a_d)$ such that any two points in $B_{\epsilon_k}$ belong to the same interval of continuity. Let $\epsilon = \min_{0\leq k \leq n-1}e_k$. Then choose $d$ such that $\prod_{i=1}^{n} \lambda_i \cdot d < \epsilon$ where $\lambda_i$ are the corresponding dilation factors for $1 \leq i \leq n$. Now $f^n(a_d)=a$ and $f^n(a)$ will be to the right of $a$ with distance $\prod_{i=1}^{n} \lambda_i \cdot d$; this follows from the fact that the trajectories from $a_d$ and $a$ never "split up" due to a singularity and that orientation is preserved in the plane under $\phi|_{\frac{\pi}{2}}$. By letting $d \rightarrow 0$ we have that $a$ is also accumulated on the right by points in $\gamma \cap L_i$ which is a contradiction as $a\neq b$, i.e $a$ should be isolated from the right. \qed \\

We now define a map $f_{L}: \mathcal{H}(L) \rightarrow \mathcal{H}(L)$ that is semi-conjugated to $f:L \rightarrow L$. The idea is to define the image of a point in $\mathcal{H}(L)$ that was obtained from collapsing an interval of $L$ as the point in $\mathcal{H}(L)$ that is obtained from collapsing the image of the first return map on this interval to a point. More formally:

\begin{defs} Let $f:L \rightarrow L$ be the first return map induced by $\mathcal{F}_{\theta}$. If $\{ a \} \in \mathcal{H}(L)$ is such that $\gamma$ contains ${a}$, then $f(a)$ is well-defined by the non-trivial recurrence of $\gamma$ and we set
$$ f_{L}(\{ a \}) = \{ f(a) \}$$
If $[a,b], [c,d] \in \mathcal{H}(L)$ such that $[a,b] \cap \gamma = \emptyset$, then we define
$$ f_{L}([a,b]) = [c,d]$$ 
provided there exist sequences $(p_n)_{n \in \mathbb{N}}, (q_n)_{n \in \mathbb{N}}$  contained in $L \hspace{0.05cm}\cap \hspace{0.05cm} \gamma$ such that
\begin{enumerate}[label = \roman*), itemsep=0pt]
    \item $\lim_{n\rightarrow \infty} p_n = a, \hspace{0.05cm}\lim_{n\rightarrow \infty}q_n = b$,
    \item $\lim_{n \rightarrow \infty} f(p_n) = c, \lim_{n\rightarrow \infty} f(q_n) = d$,
     \item $p_n, q_n$ belong to the same interval of continuity $\forall n \in \mathbb{N}$.
\end{enumerate}
\end{defs}
Note that the fact that $f_{L}$ is continuous and injective on its domain of definition follows directly from the fact that $f$ is continuous and injective on its domain of definition. 
\begin{prop}\label{prop6.7} There are only finitely many points in $\mathcal{H}(L)$ where $f_{L}$ is not defined. 
\end{prop}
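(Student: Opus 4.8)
The plan is to exploit the fact, recorded in the remark following Definition~\ref{deffirstreturnmap}, that the first return map $f$ is in every chart an affine interval exchange transformation; in particular $f$ is affine on each of finitely many maximal intervals of continuity in $L$ (and orientation-preserving there, the dilation factors being positive) and is undefined at only finitely many points. I would let $D\subseteq L$ be the finite set consisting of the endpoints of these intervals of continuity, the points where $f$ is undefined, and the endpoints of the finitely many segments constituting $L$. Since $\mathcal{H}(L)$ is a partition of $L$ by closed sets, at most $|D|$ of its elements meet $D$, so it suffices to prove that $f_L$ is defined at every $E\in\mathcal{H}(L)$ with $E\cap D=\emptyset$. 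Two elementary facts will be used repeatedly: since $x$ and $f(x)$ lie on a common leaf, if $f(x)$ is defined and lies on $\gamma$ then $x\in\gamma$, so $f^{-1}(\gamma)=\gamma$ on the domain of $f$; and, by transversality of $L$, $\overline{\gamma\cap L}=\Omega\cap L$.

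The first case is a singleton $\{a\}\in\mathcal{H}(L)$ with $a\in\gamma$ (and $a\notin D$). Recurrence of $\gamma$ makes $f(a)$ defined, so it remains to check that $\{f(a)\}\in\mathcal{H}(L)$, i.e.\ that $f(a)$ lies in the closure of no connected component of $L-\Omega$. It is not an interior point of a gap, since $f(a)\in\gamma$. If it were an endpoint of a gap $(c,d)$, then, $f$ being an affine homeomorphism of a two-sided neighbourhood of $a$ onto one of $f(a)$, the part of $(c,d)$ adjacent to $f(a)$ would pull back to a one-sided neighbourhood $N$ of $a$ with $f(N)\subseteq(c,d)$; a short limiting argument using $f^{-1}(\gamma)=\gamma$ then gives $N\cap\Omega=\emptyset$, so that $a$ lies in the closure of a gap, contradicting $\{a\}\in\mathcal{H}(L)$. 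Hence $\{f(a)\}\in\mathcal{H}(L)$ and $f_L(\{a\})$ is defined.

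The second case is an element $E=[a,b]\in\mathcal{H}(L)$ with $E\cap\gamma=\emptyset$, which by Lemma~\ref{lem6.5} covers every gap closure as well as every singleton $\{a\}$ with $a\in\Omega\setminus\gamma$. Assuming $E\cap D=\emptyset$, the map $f$ is continuous on $[a,b]$, so $[a,b]$ lies in a single interval of continuity $J$ of $f$, and $J$ extends strictly past $a$ and $b$ (its endpoints belong to $D$). Since $a,b\in\Omega$ and $\gamma\cap L$ is dense in $\Omega\cap L$, I can choose $p_n,q_n\in\gamma\cap J$ with $p_n\to a$ and $q_n\to b$, taking, when $a\neq b$, the point $p_n$ to the left of $a$ and $q_n$ to the right of $b$; this is possible because $(a,b)$ is a gap, after replacing $L$ by a slight extension if $a$ or $b$ is an endpoint of a component of $L$, exactly as in the proof of Lemma~\ref{lem6.5}. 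These points all lie in the interval of continuity $J$, and continuity of $f$ on $J$ forces $f(p_n)\to f(a)=:c$ and $f(q_n)\to f(b)=:d$, so the sequences demanded in the definition of $f_L$ exist and $f_L(E)=[c,d]$. Finally $[c,d]\in\mathcal{H}(L)$: $f$ is an orientation-preserving homeomorphism of $(a,b)$ onto $(c,d)$, the identity $f^{-1}(\gamma)=\gamma$ forces $(c,d)\cap\Omega=\emptyset$, and $c,d\in\overline{\gamma}=\Omega$ as limits of points of $\gamma$, so $(c,d)$ is a component of $L-\Omega$.

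Combining the two cases, $f_L$ is defined at every element of $\mathcal{H}(L)$ disjoint from $D$, hence everywhere on $\mathcal{H}(L)$ outside a set of at most $|D|$ points, which proves the proposition. I expect the only genuinely delicate point to be the bookkeeping of sides: one must keep track of the side from which $\gamma\cap L$ accumulates at each relevant point and verify that it is transported correctly by the orientation-preserving map $f$; this is what makes it natural to discard the finitely many elements of $\mathcal{H}(L)$ that meet the discontinuity set $D$. The remaining ingredients --- that $f^{-1}(\gamma)=\gamma$ on the domain of $f$, that $\overline{\gamma\cap L}=\Omega\cap L$, and that an interval with endpoints in $\Omega$ and interior disjoint from $\Omega$ is a component of $L-\Omega$ --- are routine.
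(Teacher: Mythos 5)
Your argument is correct and is essentially the paper's proof in a more explicit form: the paper's finitely many obstructions are singularities lying in the return strip between a gap $[a,b]$ and $f([a,b])$ (which is exactly what produces a discontinuity of $f$ inside that gap) together with gaps sitting at the ends of the components of $L$, and these are precisely the elements of $\mathcal{H}(L)$ meeting your finite set $D$. Away from $D$ you verify in detail what the paper asserts tersely, namely that the approximating sequences exist from both sides and that their limits again form an element of $\mathcal{H}(L)$.
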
 
\begin{proof} In fact, whenever $\gamma$ intersects $[a,b] \in \mathcal{H}(L)$ it is clear that $f_{L}([a,b] )$ is well-defined. If $[a,b] \cap \gamma = \emptyset$, then either there exist sequences $(p_n)_{n \in \mathbb{N}}, (q_n)_{n \in \mathbb{N}}$ on $L \hspace{0.05cm}\cap \hspace{0.05cm} \gamma$ which accumulate to $[a,b]$ from the right and the left or $[a,b]$ is an interval at the end of a connected component of $L$. In the first case, $f_{L}([a,b])$ fails to be well defined if and only if there is at least one singularity of the surface contained in the strip made up of the intersection of all the forward half-leaves that start at at point in $[a,b]$ and all the backward half-leaves that start at at a point in $f([a,b])$. There are only finitely many such singularities and hence only finitely many intervals $[a,b]$ for which this is the case. In the second case, $[a,b]$ is accumulated by elements in $L \hspace{0.05cm}\cap \hspace{0.05cm} \gamma$ only from one side and thus $f_{L}$ is not well defined. As $L$ is a finite union of intervals, this case also only arises for finitely many elements in $\mathcal{H}(L)$. \end{proof}

\begin{lem}\label{lem6.8} $\mathcal{H}(L)$ is homeomorphic to $L$. \end{lem}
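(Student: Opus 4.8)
The plan is to realize the quotient map $h\colon L \to \mathcal{H}(L)$ as (a reparametrisation of) the cumulative distribution function of a carefully chosen measure on $L$, and then to upgrade the resulting continuous bijection to a homeomorphism by a compactness argument. First I would reduce to the case in which $L$ is a single segment: the closure of a connected component of $L-\Omega$ is connected, hence contained in one connected component of $L$, so $\mathcal{H}(L)$ is the disjoint union of the spaces $\mathcal{H}(L_j)$ as $L_j$ ranges over the components of $L$; since a component of $L$ disjoint from $\gamma$ would be collapsed wholesale to a point, we may assume every component of $L$ meets $\gamma$, and it then suffices to prove the statement for $L\cong[0,1]$.

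Next I would record the only structural input needed: $\Omega\cap L$ is a nonempty, \emph{perfect}, compact subset of $[0,1]$. Nonemptiness and compactness are clear ($\gamma$ meets $L$, and $\Omega=\overline{\gamma}$ is closed); that $\Omega\cap L=\overline{\gamma\cap L}$ has no isolated points is exactly the fact, already used above to see that $\mathcal{H}(L)$ is an honest partition, that $\gamma\cap L$ has no isolated points (the closure of a set without isolated points is perfect). With this in hand I would construct a non-atomic Borel probability measure $\mu$ on $[0,1]$ whose support is \emph{exactly} $K:=\Omega\cap L$: pick a countable basis $(V_j)_j$ of nonempty relatively open subsets of the perfect set $K$; each $V_j$ is uncountable, hence contains a topological Cantor set $C_j$ carrying a non-atomic probability measure $\mu_j$ supported inside $C_j\subset V_j$; set $\mu:=\sum_j 2^{-j}\mu_j$. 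Then $\mu$ is non-atomic, $\mu(V_j)\ge 2^{-j}>0$ for every $j$ so $\mathrm{supp}\,\mu=K$, and in particular $\mu$ gives mass $0$ to every connected component of $L-\Omega$ (such a component meets $K$ in at most its two endpoints). Defining $\Phi\colon[0,1]\to[0,1]$ by $\Phi(x):=\mu([0,x])$, non-atomicity makes $\Phi$ continuous and non-decreasing with $\Phi(0)=0$ and $\Phi(1)=1$, and for $x<y$ one gets $\Phi(x)=\Phi(y)$ iff $\mu((x,y])=0$ iff $(x,y)\cap K=\emptyset$ (using $\mathrm{supp}\,\mu=K$ in one direction and non-atomicity in the other) iff $[x,y]$ is contained in the closure of a single component of $L-\Omega$. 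Thus the fibres of $\Phi$ are precisely the elements of $\mathcal{H}(L)$, so $\Phi$ descends to a continuous map $\bar\Phi\colon\mathcal{H}(L)\to[0,1]$ with $\Phi=\bar\Phi\circ h$; it is onto by the intermediate value theorem and injective since its fibres coincide with those of $h$. As $\mathcal{H}(L)$ is compact (a continuous image of $[0,1]$) and $[0,1]$ is Hausdorff, $\bar\Phi$ is a homeomorphism, which is what we want.

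I expect the only genuinely delicate step to be the construction of a measure with \emph{full} support on $\Omega\cap L$. The naive candidate — the ``collapse every complementary interval'' map $x\mapsto\lambda([0,x]\cap K)$, with $\lambda$ Lebesgue measure — works when $\Omega\cap L$ is a finite union of intervals, but degenerates to a constant map when $\Omega\cap L$ is a Lebesgue-null Cantor set, which is precisely why one must instead put a Cantor-type measure on $K$. Once the right measure is available, everything else (continuity of $\Phi$, identification of its fibres, the compact-to-Hausdorff upgrade) is routine, and the argument is insensitive to which of the two cases for $\Omega\cap L$ occurs — indeed it does not use, and logically precedes, the dichotomy established later in this section.
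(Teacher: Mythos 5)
Your proof is correct, and at its core it is the same idea as the paper's: realize the quotient map as (the descent of) a monotone continuous ``Cantor-function''-type map on $L$ whose fibres are exactly the elements of $\mathcal{H}(L)$. The differences are in execution, and they work in your favour. The paper splits into two cases according to whether $L\cap\Omega$ contains a subinterval or is a Cantor set, dismisses the first as clear, and in the second simply invokes ``the Cantor function'' associated to $L\cap\Omega$ without constructing it; this implicitly leans on the dichotomy for $L\cap\Omega$ that is only proved afterwards (Lemma \ref{lem6.10}). You instead build the map explicitly as the distribution function of a non-atomic measure with full support on the perfect set $K=\Omega\cap L$ (the sum $\sum_j 2^{-j}\mu_j$ over Cantor subsets of a countable basis of $K$), identify its fibres with $\mathcal{H}(L)$, and upgrade the induced continuous bijection to a homeomorphism by the compact-to-Hausdorff argument; this is uniform in the two cases and logically independent of Lemma \ref{lem6.10}, which is cleaner. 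Two small points to tighten: the identification $\Omega\cap L=\overline{\gamma\cap L}$ (and hence perfectness of $K$) deserves a one-line flow-box justification, since a priori a point of $\overline{\gamma}$ on $L$ need only be approximated by points of $\gamma$ in the surface rather than on $L$; and your reduction ``we may assume every component of $L$ meets $\gamma$'' quietly sets aside components of $L$ disjoint from $\Omega$, for which the statement as literally written would fail (such a component collapses to a single point of $\mathcal{H}(L)$) --- an edge case the paper also ignores, but worth flagging rather than assuming away.
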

\begin{proof}[Proof (see also \cite{gutierrez}).] This is clear if $L \hspace{0.5mm} \cap \hspace{0.5mm} \Omega$ contains a subinterval of $L$, where $\Omega$ is the non-trivially recurrent leaf closure. If $L \cap \Omega$ is a Cantor set, then consider the Cantor function $\mathcal{L}: L_i \rightarrow L_i$ which is a monotone continuous map of degree one. The map is constant on a closed subinterval of $L$ if and only if this interval is the closure of a connected component of $L - \Omega$. Then the quotient space $L / \mathcal{L}$ is homeomorphic to $L$ and $L / \mathcal{L}$ is precisely $\mathcal{H}(L)$. 
\end{proof}

\begin{lem}\label{lem6.9} $f_{L}$ has a dense positive semi-orbit.
\end{lem}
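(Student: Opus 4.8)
The plan is to produce a single point of $\mathcal{H}(L)$ whose forward $f_{L}$-orbit is dense, namely the class $h(p)$ of an arbitrary $p \in \gamma \cap L$. First I would record that $h(p) = \{p\}$ is a singleton class: by Lemma \ref{lem6.5} no point of $\gamma$ lies in the closure of a component of $L - \Omega$. Since $\gamma$ is an infinite regular recurrent leaf it meets $L$ infinitely often in forward time, so $f^{n}(p)$ is defined and lies on $\gamma$ for every $n \geq 0$; then the defining clause $f_{L}(\{a\}) = \{f(a)\}$ for $a \in \gamma$ gives, by induction, $f_{L}^{n}(h(p)) = \{f^{n}(p)\}$, and all these iterates lie in the domain of $f_{L}$. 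Hence the forward $f_{L}$-orbit of $h(p)$ equals $h(\mathcal{O})$, where $\mathcal{O} := \{f^{n}(p) : n \geq 0\}$ is exactly the set of intersections of the forward half-leaf $\gamma^{+}$ issued from $p$ with $L$. It then remains to show $h(\mathcal{O})$ is dense in $\mathcal{H}(L)$.

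The next step is the density of $\gamma^{+}$ in $\Omega$. Since $\gamma$ is non-trivially recurrent, $p$ is $\omega$-recurrent, so $p \in L_{\omega}(p)$; as $L_{\omega}(p)$ is closed and invariant under the flow furnished by Proposition \ref{foliationadmitsflow}, it contains the whole leaf $\gamma$ through $p$, hence $\overline{\gamma} = \Omega$. Conversely $L_{\omega}(p) \subseteq \overline{\gamma^{+}} \subseteq \overline{\gamma} = \Omega$ directly from Definition \ref{deflimitset}. Therefore $\overline{\gamma^{+}} = \Omega$.

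The main work is to upgrade this to: $\mathcal{O} = \gamma^{+} \cap L$ is dense in $\Omega \cap L$. It suffices to treat points $x \in \Omega \cap L$ lying in the interior of a segment of $L$ and away from the finitely many singularities, these forming a dense subset of $\Omega \cap L$. Pick times $t_{n} \to +\infty$ with $\phi^{t_{n}}(p) \to x$, which exist since $x \in L_{\omega}(p)$, and a flow box $B$ at $x$ (a foliated chart in which $L \cap B$ is a single cross-arc met exactly once by each plaque). For $n$ large, $\phi^{t_{n}}(p) \in B$; flowing along its leaf-arc inside $B$ for a short time $s_{n}$, with $|s_{n}|$ bounded by the width of $B$, lands on the point $z_{n} := \phi^{t_{n}+s_{n}}(p)$ of $L \cap B$, and $z_{n} \to x$ by continuity of the return to the cross-section. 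Since $t_{n} + s_{n} > 0$ for $n$ large, $z_{n} \in \gamma^{+} \cap L$, so $x \in \overline{\gamma^{+} \cap L}$. I expect this to be the only delicate point: a priori the short arc landing on the cross-section may run \emph{backward} along $\gamma$, which is precisely why the approximating times $t_{n}$ are taken to tend to infinity rather than working with an arbitrary dense sequence on $\gamma^{+}$.

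Finally I would descend to the quotient. We may assume each segment of $L$ meets $\gamma$, since a segment disjoint from $\gamma$ (equivalently, from $\Omega$) carries no recurrent orbit and may be discarded. Then $h(\Omega \cap L) = \mathcal{H}(L)$: a singleton class equals $h(a)$ with $a \in \Omega \cap L$, and a collapsed class $[a,b]$ equals $h(a)$ or $h(b)$ with at least one of $a,b$ in $\Omega \cap L$, as a component of $L - \Omega$ cannot exhaust a whole segment. Since $\Omega \cap L$ is closed in $L$, the previous step gives $\overline{\mathcal{O}} = \Omega \cap L$, and continuity and surjectivity of $h$ yield $\overline{h(\mathcal{O})} \supseteq h(\overline{\mathcal{O}}) = h(\Omega \cap L) = \mathcal{H}(L)$. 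Thus $h(p)$ has dense forward $f_{L}$-orbit, which is exactly the assertion of the lemma and the hypothesis needed to apply the Key Lemma (Lemma \ref{keylemma}).
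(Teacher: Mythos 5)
Your proposal is correct and follows essentially the same route as the paper's proof: pick a singleton class $\{p\}$ with $p\in\gamma\cap L$, use non-trivial recurrence to show the forward half-leaf accumulates on all of $\Omega$ (in particular on the endpoints of the collapsed gaps), transfer this to the returns on the cross-section, and descend through the quotient map $h$. You are somewhat more explicit than the paper about two points it treats tersely — the flow-box argument upgrading density of $\gamma^{+}$ in $\Omega$ to density of $\gamma^{+}\cap L$ in $\Omega\cap L$, and the need to discard (or assume away) segments of $L$ disjoint from $\Omega$ — but the underlying mechanism is identical.
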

\begin{proof}
Let $\sigma \in \mathcal{H}(L)$ be such that $\sigma$ is contained in $\gamma$. Then $(f_{L})^n(\sigma)$ is defined in $\sigma$ for all $n \in \mathbb{N}$. Let us show that with respect to the quotient topology on $\mathcal{H}(L)$ we also have that $\{ (f_{L})^n(\sigma)|n \in \mathbb{N} \}$ is a semi-orbit dense in $\mathcal{H}(L)$. Let us pick $[a,b] \in \mathcal{H}(L)$ with $\gamma \cap [a,b] = \emptyset$. Then the pre-image under the quotient map $h$ of any open ball with respect to the quotient topology around $[a,b]$ is an open interval of $L$ containing $[a,b]$. W.l.o.g let $\gamma$ accumulate at $a$, then as in the proof of Lemma 6.4, let $a_d$ be a point in $L \cap \gamma$ whose Lebesgue-distance from $a$ is smaller than $d$ with respect to a chart $(U_1,\phi_1)$ that contains $a$ and $a_d$. Furthermore, the forward half-leaf of $\gamma$ starting at $\sigma$ accumulates at $a_d$ by the non-trivial recurrence of $\gamma$, so for any $\tilde{d} > 0$ we can find $k \in \mathbb{N}$ such that $f^k(\sigma) \in U_1$ and $f^k(\sigma)$ is $\tilde{d}$-close to $a_d$ and hence at most $d+\tilde{d}$-close to $a$ with respect to the Lebesgue-distance in the chart $(U_1, \phi_1)$. Thus, any open interval containing $[a,b]$ will have non-empty intersection with the positive half-leaf of $\gamma$ starting at $\sigma$ and hence the corresponding open set in the quotient topology will intersect the trajectory $(f_{L})^n(\sigma)$ for some $n \in \mathbb{N}$. Therefore, $f_{L}$ has a dense positive semi-orbit. \end{proof}

\begin{lem}\label{lem6.10} $L \cap \Omega$ is either a Cantor set or a finite union of closed intervals. \end{lem}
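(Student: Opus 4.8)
The plan is to distinguish the two possible topological types of the closed set $L \cap \Omega$ using the fact that $\gamma$ is non-trivially recurrent, together with Lemma \ref{lem6.5}. First I would recall that $\Omega = \overline{\gamma}$, so $L \cap \Omega = \overline{L \cap \gamma}$ (the closure taken in $L$): one inclusion is immediate, and the reverse follows because any point of $\Omega \cap L$ that is not already in $\overline{L\cap\gamma}$ would have a neighborhood in $L$ missing $\gamma$, contradicting density of $\gamma$ in $\Omega$ near a transversal. Thus $L \cap \Omega$ is closed in $L$, hence a closed subset of a finite union of intervals.

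Next I would examine isolated points and interior. By the non-trivial recurrence of $\gamma$, $L \cap \gamma$ has no isolated points: any point $p \in L\cap\gamma$ is accumulated (on at least one side, and by the linear-holonomy argument in the proof of Lemma \ref{lem6.5}, in fact on both sides unless $p$ sits at an endpoint of $L$) by further points of $L\cap\gamma$. Passing to the closure preserves this — a closed set with no isolated points among a dense subset has no isolated points at all — so $L\cap\Omega$ has no isolated points. Now consider the set $J$ of points of $L\cap\Omega$ that lie in the interior (relative to $L$) of $L\cap\Omega$. If $J = \emptyset$, then $L\cap\Omega$ is closed, has empty interior, and has no isolated points; being a subset of a one-dimensional interval with empty interior it is totally disconnected, so $L\cap\Omega$ is a Cantor set (or a finite set of points, but that is excluded since it has no isolated points and is nonempty as it contains $\gamma\cap L$ which is infinite; if $L\cap\Omega=\emptyset$ the statement is vacuous, and if it is a single point that point would be isolated — contradiction).

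It remains to handle the case $J \neq \emptyset$. Here I would argue that every connected component of $L \cap \Omega$ is either a point or a nondegenerate closed interval, and that there are only finitely many of the latter. A nondegenerate component is a closed interval $[a,b] \subseteq \Omega$; by Lemma \ref{lem6.5}, the interior of each interval in $\mathcal{H}(L)$ avoids $\gamma$, and conversely the closure of each connected component of $L - \Omega$ is such an interval, so the complement $L - (L\cap\Omega)$ has only finitely many components iff $L\cap\Omega$ is a finite union of closed intervals — which is what we want to conclude. To get finiteness, I would use that each nondegenerate component $[a,b]$ of $L\cap\Omega$ contains, by the recurrence of $\gamma$, infinitely many points of $\gamma\cap L$ accumulating at both $a$ and $b$; moving along $\gamma$ one returns to $L$ via the first return map $f$, and the linear-holonomy estimate from Proposition \ref{prop_lin_hol} shows $f$ expands or contracts each such interval by a fixed factor along its orbit. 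If there were infinitely many nondegenerate components their total length is finite (bounded by $\lambda(L)$), so their lengths tend to zero; but each such interval maps under some iterate of $f$ to another component of $L\cap\Omega$, and the combinatorics of the finitely many branches of $f$ together with the fixed expansion factors force the family of component lengths to be finite — essentially because $\Omega$ is $f$-invariant and $f$ is a piecewise-affine injection with finitely many discontinuities, so it permutes the (at most countably many) components but cannot shrink infinitely many of them without creating an isolated point or hitting a singularity, contradicting the standing assumption. The main obstacle I anticipate is precisely this finiteness argument in the case $J\neq\emptyset$: ruling out a Cantor-like remainder coexisting with nondegenerate intervals, and doing so cleanly using only the linear holonomy and the non-trivial recurrence of $\gamma$ rather than appealing to an external rigidity theorem.
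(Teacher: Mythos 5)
Your setup is fine: the identification $L\cap\Omega=\overline{\gamma\cap L}$, the absence of isolated points, and the conclusion in the empty-interior case (closed, totally disconnected, no isolated points, hence Cantor) all match the paper. The genuine gap is exactly where you flag it: the case where $L\cap\Omega$ has nonempty interior. Your proposed finiteness argument --- that $f$ permutes countably many components and ``the combinatorics of the finitely many branches together with the fixed expansion factors'' rule out infinitely many nondegenerate components or a coexisting Cantor-like remainder --- is not carried out, and it is not clear it can be: an $f$-invariant closed set consisting of countably many shrinking intervals accumulating on a Cantor set is not excluded by length bookkeeping alone, since the affine factors can contract.

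The missing idea is a propagation (homogeneity) argument rather than a counting one. Since $\gamma$ is a single leaf, any two points $a,b\in\gamma\cap L$ satisfy $b=f^n(a)$ for some $n$, and $f^n$ is continuous (indeed affine) on a small neighbourhood $B_\epsilon(a)$ in $L$, by covering the arc of $\gamma$ from $a$ to $b$ with finitely many charts as in Proposition \ref{prop_lin_hol}. If $L\cap\Omega$ contains an open subsegment $l$, pick $a\in\gamma\cap l$; then $\gamma\cap L$ is dense in $B_\epsilon(a)\subset l$, and continuity of $f^n$ transports this density to a neighbourhood of $b$. Hence \emph{every} point of $\gamma\cap L$ is interior to $L\cap\Omega$, which at once rules out the mixed case you were worried about and exhibits $L\cap\Omega$ as a finite union of closed intervals. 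This dichotomy --- either no open subsegment at all (Cantor) or density everywhere along $\gamma\cap L$ (intervals) --- is the content of the paper's proof and is what your plan needs to close the argument.
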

\begin{proof} Assume  $L \cap \Omega$ contains an open sub-segment $l$. This means that $\gamma \cap l$ is dense in $l$. If $a \in \gamma \cap l$ and $b \in \gamma \cap L$, then there exists $n \in \mathbb{N}$ such that $f^n(a) = b$. We can cover the arc of $\gamma$ between $a$ and $f^n(a) = b$ by charts $(U_1,\phi_1), \dots, (U_l, \phi_l)$ and choose a neighbourhood $B_\epsilon(a)$ of $a$ in $l$ such that $B_\epsilon(a) \in U_1$, $f^n(B_\epsilon(a)) \in U_l$ and the forward half-leaves that start at the points in $B_\epsilon(a)$ intersected with the backward half-leaves that start at the points in $f^n(B_\epsilon(a))$ are entirely covered by the charts (again this is possible by Proposition \ref{prop_lin_hol} in Chapter 3). Then by the continuity of $f^n$ on $B_\epsilon(a)$, as $B_\epsilon(a)$ belongs to the closure of $\gamma \cap L$, also $f^n(B_\epsilon(a))$ belongs to the closure of $\gamma \cap L$. Hence, for any $b \in \gamma \cap L$ we can find a neighbourhood that belongs to the closure of $\gamma \cap L$, meaning that $L \cap \Omega$ is a finite union of closed intervals. If on the contrary $L \cap \Omega$ does not contain an open sub-segment, then $L \cap \Omega$ is totally disconnected, closed, nonempty and has no isolated points, meaning that it is a Cantor set. \end{proof}

\begin{proof}[Proof of Proposition \ref{mainproposition}] Lemma \ref{prop6.7} - \ref{lem6.9} show us that $f_{L}$ satisfies the assumptions of the key lemma and thus, by the key lemma, $f_{L}$ is conjugate to an interval exchange transformation $E$ which has a dense positive semi-orbit. It then follows from \cite{keane} that any orbit of $E$ is either finite or dense and that there are only finitely many finite orbits. This is precisely the definition of a minimal interval exchange map. Because the first return map $f:L \rightarrow L$ is semi-conjugated to $f_{L}$ by construction, and because $f_{L}$ is conjugated to $E$, we obtain that $f$ is semi-conjugate to the minimal IET $E$. Lemma \ref{lem6.10} gives us further that $L \cap \Omega$ is either a Cantor set or a finite union of closed intervals.
\end{proof}

\subsection{Proof of theorem} We now proceed to the proof of Theorem 1.1, combining Gardiner's decomposition theorem together with the results we obtained in the previous two sections. 

\begin{proof} [Proof.] Given a dilation surface $S$ and a directional foliation $\mathcal{F}_{\theta}$, by Proposition \ref{foliationadmitsflow} we can apply Gardiner's decomposition theorem and obtain a decomposition of $S$ into finitely many components that contain at most one non-trivially recurrent leaf closure. More precisely, there is a finite set $\mathcal{C}$ of homotopically nontrivial closed curves on $S$ such that 
\begin{enumerate}
    \item no curve of $\mathcal{C}$ contains a non-trivially $\alpha$ or $\omega$- recurrent point of $\mathcal{F}_{\theta}$. 
    \item if $S_i, 1 \leq i \leq n$, are the components of $S \backslash \bigcup_{C \in \mathcal{C}}C$ then, for each $i$, either $\mathcal{F}_{\theta}|_{S_i}$ is irreducible or $S_i$ contains no non-trivially recurrent point of $\mathcal{F}_{\theta}$. 
\end{enumerate}
We have shown in Section 6.2 that if $S_i$ is a component of $S \backslash \bigcup_{C \in \mathcal{C}}C$ that has no non-trivially recurrent leaf closure, we can further decompose $S_i$ into finitely many flat cylinders, affine cylinders or components that have no recurrent leaf at all. The flat and affine cylinders correspond to case (1) and case (2) of our main theorem. If $S_i$ is a component of $S \backslash \bigcup_{C \in \mathcal{C}}C$ that contains a unique non-trivially recurrent leaf closure, note first that by the irreducibility of $\mathcal{F}_{\theta}|_{S_i}$ there are no closed leaves of $\mathcal{F}_{\theta}$ on $S_i$. Furthermore, by Proposition \ref{mainproposition} we know that if $L$ is any finite union of segments transversal to $\mathcal{F}_{\theta}$ that intersect a non-trivially recurrent leaf, then $L \cap \Omega$ is either a finite union of closed intervals or a Cantor set. In the first case, the unique non-trivially recurrent leaf closure forms a subsurface where the foliation is minimal, meaning that every leaf is dense on the subsurface, which corresponds to case (3) of the theorem. In the second case, the foliation on $S_i$ is Cantor-like, which yields case (4) of the theorem. Proposition \ref{mainproposition} also allows us to conclude that the first return map $f:L \rightarrow L$ is semi-conjugated to a minimal IET. 
\end{proof}

%\subsection{Attractors and Repellors} In case one, there is no attraction. In case two, any leaf inside the cylinder will be attracted or repelled by a closed leaf. In case 3, there is no attraction. In case 4, any leaf that is contained in a "strip" of the Cantor set (i.e a band that is bounded on both sides by $\gamma$) that does not contain a singularity will accumulate to the Cantor set both in the future and in the past. If it hits a singularity only in the past, then it will accumulate to the Cantor set. If it hits a singularity only in the future, it will be repelled by the Cantor set.

%A Cantor set which fills the whole space and for which any strip never hits a singularity is attracting and repelling at the same time (but does not satisfy the definition of an attractor). 

%A Cantor set that is only attracting needs to have some "outside space". 

\subsection{Structure theorem for AIETs} We now want to show an important application of the main theorem to affine interval exchange transformations. As already mentioned in the introduction, to any AIET $T: X \rightarrow X$ we can associate a dilation surface $S_T$ which is its suspension. This surface is obtained by taking two copies of $X$ and arranging one on the top, the other one on the bottom. We identify the point $x$ on the copy above with the point $T(x)$ on the copy below. We further join the left and right endpoints with two vertical lines that we identify with each other, as illustrated in Figure 29. The object obtained in this way is the polygonal representation of a dilatin surface such that the first return map on any cross-section, for example the lower line $L$ in Figure 29, with respect to the vertical foliation $\mathcal{F}_{\pi/2}$ is exactly $T$. 

\begin{figure}[h]
\centering
\includegraphics[width=0.76\textwidth]{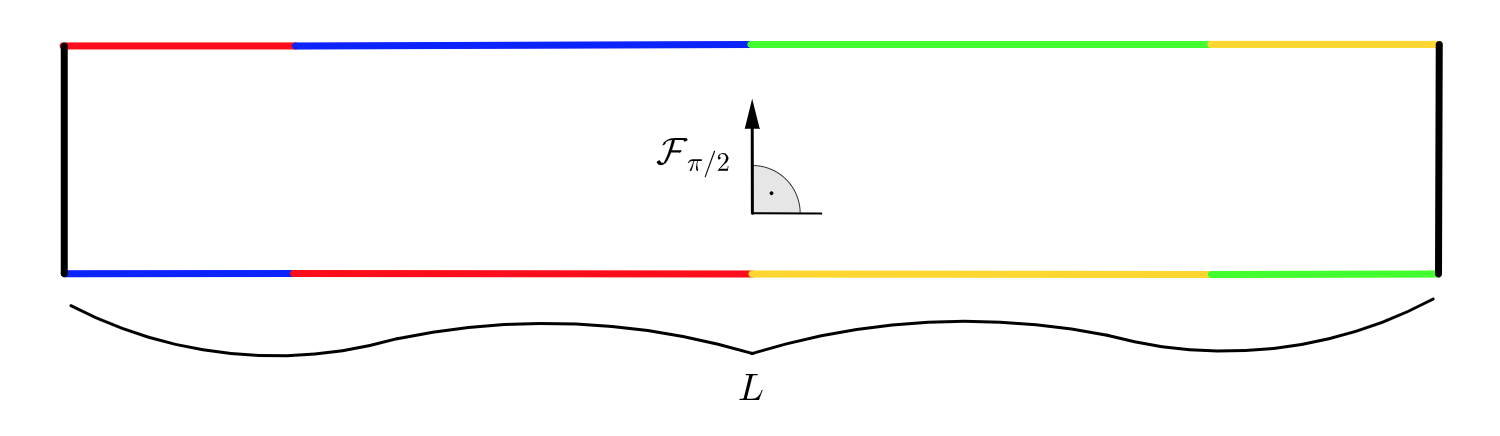}
\caption{}
\end{figure}

For completeness, we provide the following definitions for any map from a topological space $X$ to itself (see also Definitions 2.19-2.21 in Chapter 2). Let $X$ be a topological space, let $f:X \rightarrow X$ be a map from $X$ to itself.

\begin{defs} A \textit{recurrent orbit} of $f$ is an orbit of $f$ that is contained in its own set of accumulation points. A \textit{non-trivially recurrent orbit} is a recurrent orbit that is not periodic. 
\end{defs}

\begin{defs} $f$ is called \textit{completely periodic} if every infinite orbit of $f$ is periodic. \end{defs}

\begin{defs} $f$ is called \textit{Morse-Smale} if there exist a finite number of periodic orbits of $f$ and for any other point $p \in X$ the forward and backward orbit of $p$ accumulate to a periodic orbit.   
\end{defs}

\begin{defs} $f$ is called \textit{Cantor-like} if $f$ has no periodic orbit and if there exists a unique closure of a non-trivially recurrent orbit that is equal to a Cantor set. 
\end{defs}

\begin{defs} Let $X' \subset X$ be a subset of $X$. For $p \in X'$, define $n_p := \inf \{n\in\mathbb{N}_{>0} \hspace{1mm}|\hspace{1mm} f^n(p) \in X' \} $. We define the \textit{first return map} $\tilde{f}: X' \rightarrow X'$ with respect to $T$ as the map $p \rightarrow f^{n_p}(p)$. Note that this map is only defined for $p \in X$ for which $n_p$ is finite. \end{defs}

We are now in the position to state the corollary of our main theorem for affine interval exchange transformations. 

\begin{namedthm*}{Corollary \ref{maincorollary}} Given an affine interval exchange transformation $T:X \rightarrow X$, there exists a decomposition of $X$ into finitely many subsets $L_1, \dots L_n$ such that $L_i$ is a finite union of intervals for $i \in \{1,\dots,n \}$ that either does not intersect a recurrent orbit of $T$ or the first return map $f: L_i \rightarrow L_i$ is in one of the following cases: 

\begin{enumerate} 
\item completely periodic,
\item Morse-Smale,
\item minimal,
\item Cantor like.
\end{enumerate}
In case (3) and (4), $f$ is semi-conjugated to a minimal IET.
\end{namedthm*}

\begin{proof} Given an affine interval exchange transformation $T$ and its suspension $S_T$, we can apply our main theorem for the vertical foliation $\mathcal{F}_{\pi/2}$ to obtain a decomposition of $S_T$ into finitely many subsurfaces. These subsurfaces either have no recurrent leaf, or are flat or affine cylinders, or subsurfaces on which the foliation is minimal, or subsurfaces on which the foliation is Cantor-like. Let $\bigcup_{i=1}^n S_i$ denote the collection of these subsurfaces. From the main theorem we further know that in the case where the foliation is minimal or Cantor-like, the first return map on any finite union of transversal segments such that at least one of them intersects the non-trivially recurrent leaf is semi-conjugated to a minimal IET. 

Let $L$ denote the cross-section of $S_T$ given by the lower copy of $X$ as shown in Figure 29. For each subsurface $S_i$, denote by $L_i$ the intersection between $L$ and $S_i$. Note that $L_i$ is again a finite union of segments as shown in Figure 30. 
\begin{figure}[h]
\centering
\includegraphics[width=0.7\textwidth]{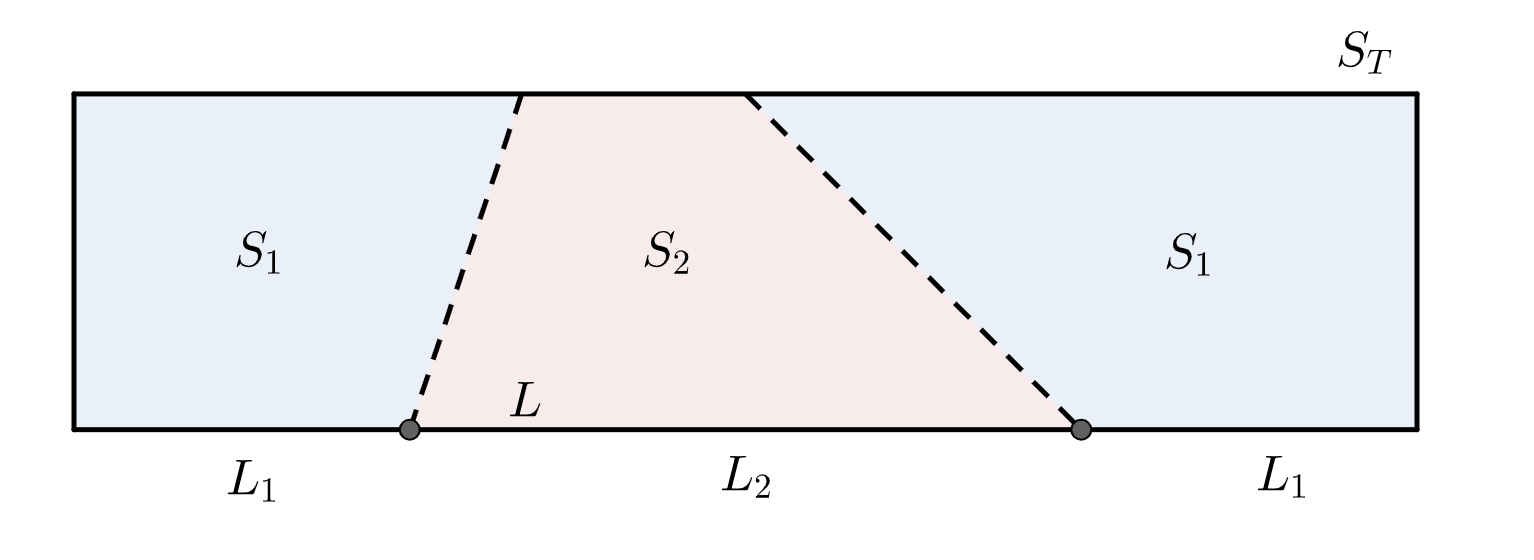}
\caption{An example of a suspension $S_T$ whose decomposition according to the main theorem consists of two subsurfaces $S_1$ and $S_2$. $L_i$ is the intersection of $L$ with $S_i$ for $i=1,2$.}
\end{figure}
Note also that any leaf of $\mathcal{F}_{\pi/2}$ is a collection of vertical lines when viewed in the polygonal representation of $S_T$. Hence, the behaviour of the leaves transfers directly to the behaviour of the orbits of $T$. If the subsurface that contains $L_i$ is a flat cylinder, then every orbit of the first return map $f: L_i \rightarrow L_i$ will be periodic, if the subsurface is an affine cylinder then every orbit will be attracted or repelled by a finite number of periodic orbits. If the vertical foliation on the subsurface is minimal, then also every infinite orbit of $f:L_i \rightarrow L_i$ will be dense on $L_i$ and if it is Cantor-like, then $f$ with respect to $T$ will be Cantor-like. This shows the first part of the theorem. As in the last two cases $L_i$ consists of a finite union of transversal segments and at least one of them intersects the non-trivially recurrent orbit closure, we can also deduce from the main theorem that $f:L_i \rightarrow L_i$ is semi-conjugated to a minimal IET. 
\end{proof}

\subsection{Attracting and repelling Cantor sets} Before we conclude this chapter, we want to comment on the fact that there are different types of Cantor sets that arise as the closure of leaves of foliations in case 4 of our main theorem. By different types of Cantor sets we mean that the sets can exhibit different kinds of attracting or repelling behaviour with respect to nearby leaves. In the following, we want to illustrate this using two examples. Our first example of such a Cantor set is obtained from "blowing up" points of a minimal IET. 

\begin{defs} Let $X$ and $\tilde{X}$ be intervals. Given a minimal IET $T:X \rightarrow X$ and an AIET $f:\tilde{X} \rightarrow \tilde{X}$ that is semi-conjugated to $T$, we say that $f$ has been obtained from $T$ through \textit{blowing up an orbit} if the following are satisfied:
\begin{enumerate} 
\item $\tilde{X}$ has been obtained from $X$ by replacing $T^n(p)$ by an interval $[a_n,b_n]$ for some $p \in X$ that belongs to an infinite orbit, for all $n \in \mathbb{Z}$. 
\item $f[a_n,b_n] = [a_{n+1},b_{n+1}]$ for all $n \in \mathbb{Z}$.
\end{enumerate}
\end{defs}
Note that by definition the forward and backward images with respect to $f$ of $[a_n,b_n]$ never intersect each other. We call such an interval a \textit{wandering interval}. While it is difficult to explicitly construct such "blow-ups", they are in fact quite common. Those familiar with the subject will recall the famous theorem from Marmi, Moussa and Yoccoz that proves that "almost every" IET $T$ admits a semi-conjugated AIET that has been obtained from $T$ through blowing up an orbit (see 3.2 in \cite{MarmiMoussaYoccoz}). 

Consider the wandering interval of an AIET obtained from such a blow up and remove its endpoints. Then the domain of definition of the AIET obtained consists of the future and past images of the open wandering interval and their complement. Since the iterates of this wandering interval never contain a singularity by definition of a blow-up, the complement forms a Cantor set (using the same arguments as in the proof of Proposition \ref{prop_disco_surface}). Hence, the suspension of an AIET obtained from a blow-up consists of a Cantor set as well as a connected component that winds around the whole surface and that does not contain a singularity, made up of the iterates of the wandering interval. Note that because the length of the iterates of the wandering interval has to go to zero both in the future and in the past, the $\omega-$ and $\alpha-$ limit set of any leaf is equal to the unique Cantor set, hence any leaf is both attracted and repelled by the same Cantor set. 

The second example that we want to provide is the Cantor set that arises as the closure of a leaf of a foliation that exhibits Cantor-like behaviour on the Disco surface. We have seen that for such foliations, there are two invariant subsurfaces that each contain an invariant Cantor set. The complement of these sets also consists of the forward and backward iterates of one interval that winds around the whole surface. The future iterates of this interval are eventually all contained in the blue subsurface, the past iterates are eventually all contained in the red subsurface. However, when the interval transitions from one surface to the other, it passes through a singularity, splits in two for one iteration and then forms an interval again. In particular, the AIET whose suspension is the Disco surface is not obtained from a blowing up an orbit of a minimal IET. 

\begin{figure}[H]
\centering
\includegraphics[width=0.88\textwidth]{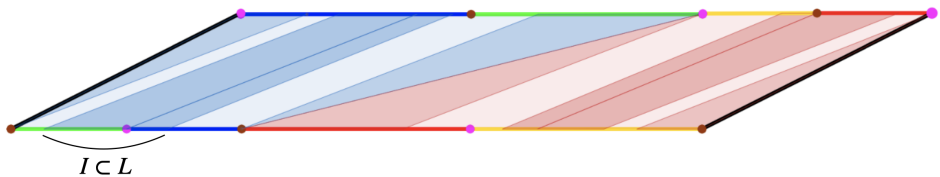}
\caption{The first iteration.}
\end{figure}

Note that because the "holes" of the Cantor set in the blue subsurface are obtained as the forward iterates of the interval $L$ and the length of these iterates goes to zero, the $\omega$-limit set of any leaf is equal to the Cantor set in the blue subsurface. Similarily, the $\alpha$-limit set of any leaf is equal to the Cantor set in the red subsurface. Hence, one Cantor set is attracting all of the leaves whereas the other Cantor set is repelling all of the leaves. In particular, we claim that both Cantor sets satisfy the definition of an \textit{attracting set}, respectively \textit{repelling set}, for the first return map $f$ on $L$. 

\begin{defs} Let $X$ be a topological space, let $g: X \rightarrow X$ be invertible. We say that $A \subset X$ is an \textit{attracting set for $g$} if it is compact, invariant and there exists a set $N$ such that $A \subsetneq N$ and $A = \bigcap_{n \in \mathbb{N}} g^n(N)$. A \textit{repelling set} for $g$ is an attractor for $g^{-1}$. 
\end{defs}

This definition of an \textit{attracting set} is satisfied if we choose $X=L,N=L, g=f$ and $A$ to be the Cantor set in the blue subsurface intersected with $L$. Similarly, the definition of a \textit{repelling set} is satisfied for the Cantor set in the red subsurface. This is a stark contrast to the Cantor set obtained in the first case of blowing up an orbit of a minimal IET. In the first case, the Cantor set does not satisfy the definition of an attracting or repelling set as any nearby leaf is always attracted and repelled at the same time. An interesting question to ask would be whether we can find more explicit examples of Cantor sets that are non-trivially recurrent orbit closures and to understand when these Cantor sets are attracting sets, repelling sets or neither.

%If there is more than one point $p \in X$ that satisfies 1 and 2 we say that $f$ has been obtained from $T$ through \textit{blowing up finitely many orbits}.

%An illustration is given in Figure [] below, where two curves $C_1$,$C_2$ cut $L$ into different components.  

%Note that this union is indeed finite, as the curves in $\mathcal{C}$ are closed and as we can continuously deform them. An illustration is given in Figure [] below, where two curves $C_1,C_2 \in \mathcal{C}$ cut $L$ into different components.  

%\begin{figure}[h]
%centering
%\includegraphics[width=0.5\textwidth]{z_recurrent_orbit_closures.png}
%\caption{The curves $C_1$ and $C_2$ seperate the two distinct recurrent orbit closures $\Omega_1$ and $\Omega_2$. $L_i$ is the intersection of $L$ with the subsurface that contains $\Omega_1$}
%\end{figure}

%Any leaf of $\mathcal{F}_{\pi/2}$ is a collection of vertical lines when viewed in the polygonal representation of $S_T$. Note further that the curves in $\mathcal{C}$ cannot intersect a recurrent leaf by definition.  

.

\section{Concluding Remarks}

\subsection{Short Summary} 
In this thesis, we proved a decomposition theorem for the structure of directional foliations on dilation surfaces. After defining dilation surfaces and their directional foliations, we gave concrete examples on the different foliation structures that can arise on dilation surfaces. In particular, we introduced the example of the Disco surface and discussed the directions for which the corresponding foliation exhibits non-trivial recurrence. We then introduced Gardiner's decomposition theorem and showed, using the theorem, that we can decompose any dilation surface into subsurfaces on which its directional foliation is either completely periodic, Morse-Smale, minimal or Cantor-like. We further showed that if the foliation is minimal or Cantor-like, then the first return map on any transversal segment is semi-conjugated to a minimal IET. We then applied the decomposition theorem to the dilation surface obtained from the suspension of an affine interval exchange transformation to obtain the equivalent result for AIETs. 

\subsection{Further questions} 
There are a number of questions that could be interesting investigate further. Firstly, while we have shown that the directional foliation on a dilation surface is locally always in one of four cases, we would like to understand more about the set of directions on $S^1$ for which these different cases arise. Recall that in the case of the Disco surface the authors of \cite{cascades} have shown that the directional foliation is Morse-Smale for almost all $\theta \in S^1$ and for a measure zero set of directions it is either completely periodic, minimal or Cantor-like (where it is Cantor-like for a Cantor set of directions). One would like to conclude similar results for any dilation surface, in particular one would like to prove Selim Ghazouani's conjecture that for almost all $\theta \in S^1$, the directional foliation on a dilation surface that is not a translation surface is Morse-Smale (see Conjecture \ref{selimconjecture}). Furthermore, while the set of directions with Cantor-like behaviour has measure zero in all the examples we know, it would be interesting to examine its Hausdorff dimension. 
\vspace{2mm}

\textbf{Q1)} Can we prove Selim Ghazouani's conjecture that for almost all $\theta \in S^1$, the directional foliation on a dilation surface that is not a translation surface is Morse-Smale? What can we say about the Hausdorff dimension for sets of directions that satisfy Cantor-like behaviour?\vspace{2mm}

We note however that the techniques needed to answer such questions will likely be very different and much more involved than the techniques used in this thesis. A more realistic objective would be to construct more examples in the spirit of the Disco surface, meaning dilation surfaces that exhibit Cantor-like behaviour, but of higher genus. So far, we have only seen examples where the corresponding Cantor set is fully contained in a genus one subsurface. 
 \vspace{2mm}

\textbf{Q2)} Can we construct explicit examples of dilation surfaces that exhibit Cantor-like behaviour where the subsurfaces that contain the Cantor set have genus strictly greater than one? \vspace{2mm}

One way to solve this could be to explicitely find an AIET whose suspension is a surface of higher genus and that has been obtained from blowing up an orbit of a minimal IET, perhaps using ideas contained in \cite{MarmiMoussaYoccoz}, such that the corresponding Cantor-sets are not contained in genus one subsurfaces. 

The last question we want to propose is related to the last section of the previous chapter in which we were discussing the different types of Cantor sets that can arise for directional foliations that exhibit Cantor-like behaviour. Ideally, we would like to come up with results that allow us to further separate case 4 into subcases according to the nature of the corresponding Cantor set. 
 \vspace{2mm}

\textbf{Q3)} For the directional foliation on a dilation surface that satisfies case 4 of our main theorem, when is the corresponding Cantor set an attracting set, repelling set or neither? \vspace{2mm}

We would like to end with this final question. We hope that during the course of the thesis, we have provided the reader with an insightful exposition to dilation surfaces and that the rise in their popularity might soon yield more answers to the questions that remain open.

\section{Appendix}

\subsection{Rauzy-Veech-Induction for the Disco surface} In  \cite{cascades}, the authors provide a full study of the directional foliation on the Disco surface. In particular, the authors showed that there is a Cantor set of directions $\theta$ in $S^1$ for which the corresponding word of the Rauzy-Veech induction for the first return map $f$ on $L$ is infinite and not constant and then deduced that in these directions, the foliation accumulates on a Cantor set. We show in this section that these directions are exactly the ones that satisfy the assumptions of Proposition \ref{prop_disco_surface}. To draw this connection, we first give a brief overview of the Rauzy-Veech for the Disco Surface with respect to the first return map $f:L \rightarrow L$, for more detail please refer to \cite{cascades}. 

\begin{figure}[H]
\centering
\includegraphics[width=0.9\textwidth]{z_disco_surface_second_iteration.png}
\caption{}
\end{figure}

For $\theta \in S^1$, for the directional foliation $\mathcal{F}_{\theta}$ on the Disco surface, consider the corresponding first return map $f:L \rightarrow L$. The idea behind the Rauzy-Veech algorithm is to smartly chose smaller and smaller subintervals of $L$ on which $f$ is well-defined. The algorithm stops as soon as we reach a subinterval that gets mapped strictly into itself. There are three possible cases for $f$, we say that $f$ is in case 1 if it is of the following form: 

\begin{figure}[H]
\centering
\includegraphics[width=0.5\textwidth]{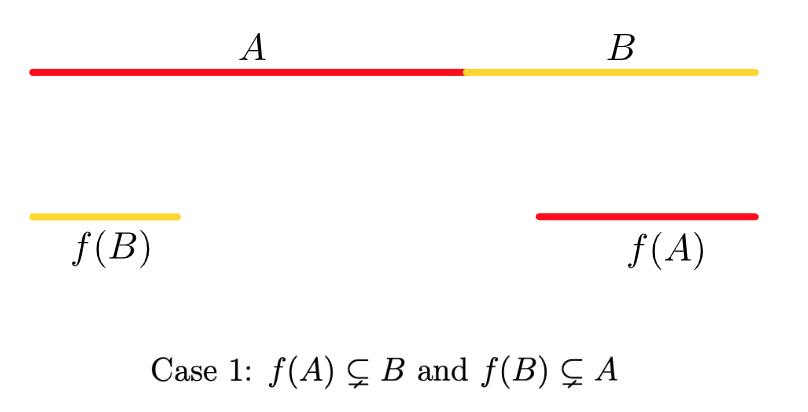}
\labelformat{empty}
\caption{Case 1: $f(A) \subsetneq B$ and $f(B) \subsetneq A$}
\end{figure}

For $f$ in case 1, consider the first return map $f$ on $L-f(A)$. It holds $f(L-f(A)) \subsetneq L-f(A)$ and hence $f$ is a contraction and has an attractive fixed point on $L-f(A)$. This then implies that $\mathcal{F}_{\theta}$ has a closed leaf. We say that $f$ on $L$ is in case 2 if satisfies one of the following two subcases: 

\begin{figure}[h]
\centering
\includegraphics[width=1.1\textwidth]{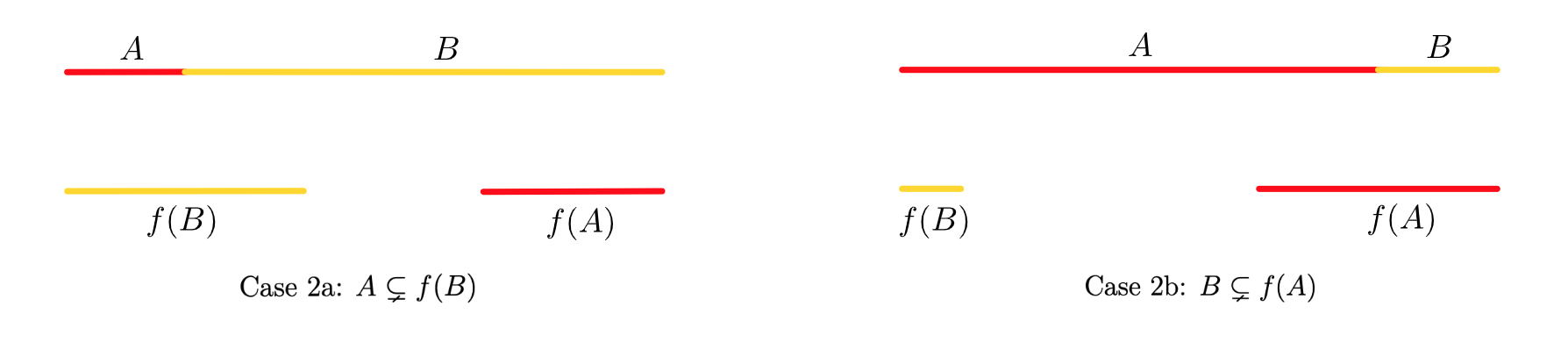}
\labelformat{empty}
\caption{}
\end{figure}

Rauzy-Veech induction follows three steps: 

\begin{enumerate}
    \item (Left Rauzy Veech Induction) If $f$ on $L$ is in case 2a, then consider the first return map on $L-A$. Repeat the loop.
    \item (Right Rauzy Veech Induction) If $f$ on $L$ is in case 2b, then consider the first return map on $L-B$. Repeat the loop.
    \item If $f$ on $L$ is in case 1, the algorithm terminates. 
\end{enumerate}
Note that the induction keeps the left endpoint of $f(A)$ and the right endpoint of $f(B)$ unchanged. Furthermore, the first return map on the subintervals $L-A$, respectively $L-B$, is again piecewise continuous  on two intervals. Here, the point of discontinuity is exactly $f^{-1}(p)$ where $p$ is the point of discontinuity of $f$ on $L$, i.e the map that we started with. Moreover, if we perform right Rauzy-Veech induction, then the right interval of $L-A$ is contracted by a factor $\frac{1}{2^2}$, as it is first sent to a subinterval of $f(A)$ and then to $f(B)$, whereas the left intervals is still contracted by $\frac{1}{2}$ (and analogous for the left Rauzy-Veech induction). So after a finite number of induction steps we consider $f$ on a subinterval of $L$ with one point of discontinuity, where the left and the right interval $A'$ and $B'$ are contracted by powers of $\frac{1}{2}$ and the left (resp. right) endpoint of $f(A')$ (resp. $f(B')$) is the left endpoint of $f(A)$ (resp. $f(B)$). 

To any first return map $f$ on $L$ we can associate a word in the alphabet $\{L,R \}$ by applying Rauzy-Veech induction and recording $L$ (resp. $R$) whenever we use left (resp. right) Rauzy Veech Induction. There are three cases that might occur: 
\begin{itemize}
    \item The word is finite. In this case the foliation $\mathcal{F}_{\theta}$ has an attracting closed leaf and is Morse-Smale. The authors of \cite{cascades} showed that this is the case for full measure set of directions $\theta \in S^1$ and that the complement of this set is a Cantor set.\vspace{2mm}
    
    \item The word is infinite but eventually constant, meaning it has an infinite tail that consists only of the letter "L" (or "R").  In this case, the top left (or right) interval of the maps obtained by Rauzy-Veech induction is multiplied each time by a positive power of $2$, however the total length of $L$ is bounded, hence this is the case only when one of the top intervals has length zero. In this case, the foliation accumulates on a saddle connection. There are only countably many directions in $S^1$ for which this happens. \vspace{2mm} 
    
    %In this case the foliation $\mathcal{F}_{\theta}$ has an attracting closed leaf: In fact, if $I_n$ is the subinterval of $L$ on which we consider $f$ after $n$ induction steps, then our defintion of Rauzy-Veech induction implies that any point on $L$ will be sent to the subinterval $I_n$ by $f$ in finite time. Hence, if induction terminates at $I_k$, then any leaf in $L$ will pass through $I_k$ at some point and any leaf passing through $I_k$ accumulates on the closed periodic leaf passing through $I_k$, as $f$ on $I_k$ is a contraction). The authors of \cite{cascades} showed, in line with conjecture 1.2, that this happens for for a set of directions of full Lebesgue measure in $S^1$. The complement of this set is a Cantor Set. 
    
   % \item The word is infinite but eventually constant, meaning it has an infinite tail that consists only of the letter "L" or "R". In this case, the top interval of the corresponding side on the induced AIET is multiplied each time by a positive power of $2$, however the total length of $L$ is bounded, hence this is the case only when one of the top intervals has length zero. In this case, the foliation accumulates on a saddle connection. There are only countably many directions in $S^1$ for which this happens. 

    \item  The word is infinite but not eventually constant. The authors of \cite{cascades} showed that this is true for a Cantor set of directions $\theta \in S^1$ and that in this case, the foliation accumulates on a Cantor set. 
\end{itemize}

\begin{prop} The set of directions for which the word is infinite but not constant are exactly the set of directions which satisfy the assumptions of Proposition \ref{prop_disco_surface}.
\end{prop}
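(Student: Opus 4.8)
The plan is to translate both conditions into statements about the first return map $f\colon L\to L$, its single point of discontinuity $p$, and the interval $I$, and then to read them off the three cases of the Rauzy--Veech classification recalled above.

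The starting point is the identity $I=L\setminus f(L)$. The proof of Proposition \ref{prop_disco_surface} already shows $I\cap f(L)=\emptyset$, and since $f$ contracts each of its two branches by the factor $\tfrac12$ one has $\lambda(f(L))=\tfrac12\lambda(L)=\lambda(L)-\lambda(I)$, which forces the equality. As $f$ is injective, telescoping gives $\bigcup_{n=0}^{N}f^{n}(I)=L\setminus f^{N+1}(L)$ for each $N$ for which all these iterates are defined, i.e. for which no forward leaf issued from $I$ has yet run into a singularity. Hence the hypothesis of Proposition \ref{prop_disco_surface} --- read, as in that proof, as saying that the forward $f$-orbit of $I$ never meets $\Sigma$ --- is equivalent to the statement that every point of $I$ has an infinite forward $f$-orbit, and in that case $\Omega=\bigcap_{n\geq 0}f^{n}(L)$. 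Two consequences of $I=L\setminus f(L)$ will do the bookkeeping. First, a point $x\in I$ is precisely a point whose backward leaf runs into a singularity before returning to $L$; hence if the forward leaf of some $x\in I$ also runs into a singularity, the whole leaf through $x$ is a saddle connection meeting $L$. Second, dually, $p$ is the unique point of $L$ whose forward leaf runs into a singularity, so a saddle connection meeting $L$ forces the backward orbit of $p$ to be finite and to terminate at a singularity --- which is exactly the degeneration of a Rauzy--Veech interval that the excerpt identifies with an eventually constant word (and $p$ lying on an attracting periodic orbit of $f$ is what makes the word finite).

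Granting this dictionary, I would argue the two inclusions as follows. If the word is infinite and not eventually constant, then the algorithm never reaches Case~1, so $\mathcal{F}_\theta$ has no closed leaf, and no Rauzy--Veech interval degenerates, so $p$ has an infinite backward orbit; by the first consequence above, no $x\in I$ can have a forward leaf hitting $\Sigma$, that is $f^{n}(I)\cap\Sigma=\emptyset$ for all $n$. Conversely, if $f^{n}(I)\cap\Sigma=\emptyset$ for all $n$, then Proposition \ref{prop_disco_surface} applies and yields that $\Omega$ is a Cantor set and that $(f^{n}(q))_{n}$ accumulates onto $\Omega$ for every $q\in L$; in particular $f$ has no periodic orbit, since such an orbit would accumulate on a finite set rather than on the uncountable $\Omega$, so $\mathcal{F}_\theta$ has no closed leaf and the word is not finite. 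Moreover no point of $I$ has a finite forward orbit, so there is no saddle connection meeting $L$, hence by the second consequence $p$ has an infinite backward orbit and no Rauzy--Veech interval degenerates; so the word is not eventually constant either.

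The delicate part is the dictionary itself: one must match, in both directions, the combinatorial degenerations of Rauzy--Veech induction (reaching Case~1; a tracked endpoint colliding with the discontinuity point) with the geometric events (an attracting closed leaf; a saddle connection) and with what is visible on $L$ (a periodic orbit of $f$; a point of $I$ with finite forward orbit, or $p$ with finite backward orbit). This is where one uses that for the Disco surface $f$ is a two-interval map, so that the single discontinuity point $p$ carries all the relevant information and the tracked endpoints of the Rauzy--Veech intervals are iterates of $p$ and of the endpoints of $L$; the identification of an eventually constant word with the presence of a saddle connection is itself part of the analysis of \cite{cascades} being invoked. Everything else reduces to the bookkeeping of $I=L\setminus f(L)$ together with Proposition \ref{prop_disco_surface} and the Rauzy--Veech trichotomy.
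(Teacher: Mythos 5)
Your overall strategy --- reducing both conditions to whether the discontinuity point $p$ ever lands in a forward iterate of the hole $I=L\setminus f(L)$ --- is the same as the paper's, and your observations that $I=L\setminus f(L)$ and that a periodic orbit of $f$ would be incompatible with the conclusion of Proposition \ref{prop_disco_surface} are correct and useful. The genuine gap is your ``first consequence'': the claim that a point $x\in I$ is \emph{precisely} a point whose backward leaf runs into a singularity before returning to $L$ is false. Only finitely many points of $L$ lie on separatrices (each singularity of the Disco surface has finitely many incoming prongs in direction $\theta$), whereas $\lambda(I)=\tfrac12\lambda(L)>0$; the backward leaves issued from $I$ fail to return to $L$ not because they hit $\Sigma$ but because they escape from $D_1$ and never come back to $L$ (in the Cantor-like regime their $\alpha$-limit set is $C_\theta^-\subset D_2$). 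Consequently a point $x\in I$ whose forward leaf hits a singularity need not lie on a saddle connection, and both places where you invoke the saddle-connection dictionary break down: in the forward direction, ``no saddle connection meeting $L$'' does not yield ``no $x\in I$ has a forward leaf hitting $\Sigma$''; in the converse direction, ``no saddle connection meeting $L$'' does not yield ``$p$ has an infinite backward orbit'' --- that would be the converse of your second consequence, not its contrapositive.

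The repair is to delete the saddle-connection detour and argue directly, which is what the paper does: a forward iterate of the strip contains a singularity if and only if some $f^k(I)$ contains $p$ (since $p$ is the unique interior point of $L$ whose forward leaf meets $\Sigma$ before returning to $L$), if and only if $f^{-k}(p)$ meets $I$ for some $k$, if and only if the induction at some step fails to be in case 2a) or 2b) --- the induction tracks exactly the backward orbit of $p$ relative to the shrinking intervals, because the outer endpoints of $f(A)$ and $f(B)$ are kept fixed at every step. Note that this equivalence already makes the backward $f$-orbit of $p$ terminate in $I$ whenever a forward separatrix lands in the strip, with no need for the backward half of that leaf to hit a singularity. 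Your argument that the conclusion of Proposition \ref{prop_disco_surface} excludes periodic orbits (hence finite words) is a valid supplement for separating the ``finite'' from the ``infinite'' alternatives, but the ``eventually constant'' alternative should likewise be handled through the degeneration of the tracked intervals (one of them acquiring length zero), not through saddle connections.
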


\begin{proof} Note that for the intersection $I$ between the dark blue strip from Figure 32 and $L$ (where the strip is taken to be open) it holds that $I=L-f(A)-f(B)$, i.e the "hole" of the first return map $f$ on $L$. We claim that Rauzy-Veech induction is infinite if and only if the strip, when we iterate it further, never "hits" a singularity. Indeed, as mentioned before, the induction keeps the right end of $f(B)$ and of $f(A)$ unchanged. Let $p \in L$ be the point of discontinuity of $f$ on $L$. To say that for up to all $n>0$, at the $n$th induction step the resulting map is always in case 2a) or 2b) means exactly that $f^{-k}(p) \cap I_{\infty} = \emptyset$ for all $k > 1$, which is the case if and only if  $p \cap f^k(I_{\infty}) = \emptyset$ for all $k > 1$. \end{proof}

Hence there exists a Cantor set of directions $\theta \in S^1$ that satisfies the assumptions of Proposition \ref{prop_disco_surface}. 

\subsection{Proof of key lemma} In this section, we want to include the proof of Lemma \ref{keylemma} due to Carlos Gutierrez (see \cite{gutierrez}) that is crucial for the proof of our main theorem. The lemma easily follows from the following proposition whose statement and proof was originally established by C. Gutierrez in \cite{gutierrezkeylemma}: 

\begin{prop}\label{prop8.2}
    Let $C$ be a circle and $T:C \rightarrow C$ be a continuous injective map defined everywhere except possibly at finitely many points $z_1,\dots,z_n \subset C$. If $T$ has dense positive semi-orbit, then $T$ is topologically conjugate to a standard interval exchange transformation.
\end{prop}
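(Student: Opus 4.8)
The plan is to reduce the statement to the construction of one object: a $T$-invariant Borel probability measure $\mu$ on $C$ that has no atoms and full support. (Note that injectivity of $T$ is indispensable here, since an interval exchange transformation is a bijection.) Before constructing $\mu$, I would record the elementary structure of $T$: since $T$ is continuous and injective on the complement of the finite set $\{z_{1},\dots,z_{n}\}$, it carries each of the finitely many complementary open arcs homeomorphically onto an open arc, so $T$ is a bijection from $C$ minus a finite set $A$ onto $C$ minus a finite set $B$; moreover a short argument shows that mixed orientations on these arcs are incompatible with a dense semi-orbit (a ``flip'' would produce an open set of periodic points), so after possibly precomposing with an orientation-reversing homeomorphism of $C$ we may assume $T$ is orientation-preserving on every arc of continuity. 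Granting $\mu$, fix a base point $c_{0}\in C$ and set $h\colon C\to\mathbb{R}/\mathbb{Z}$, $h(x)=\mu([c_{0},x))$. Full support makes $h$ strictly monotone and non-atomicity makes it continuous, so $h$ is a homeomorphism; and $g:=hTh^{-1}$ preserves Lebesgue measure, because $h_{*}\mu$ is Lebesgue and $T_{*}\mu=\mu$. Finally, $g$ is a continuous injection on the complement of the finite set $h(A)$, is orientation-preserving on each continuity arc, and preserves Lebesgue measure; since an orientation-preserving continuous length-preserving bijection between two intervals is a translation, $g$ is a standard interval exchange transformation, which is the assertion.

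To construct $\mu$, I would run a Krylov--Bogolyubov argument along the dense orbit: take $\mu$ to be a weak-$*$ subsequential limit of the empirical averages $\mu_{N}=\tfrac1N\sum_{k=0}^{N-1}\delta_{T^{k}x_{0}}$, where $O^{+}(x_{0})$ is the given dense positive semi-orbit. Since this orbit is defined for every $k\ge 0$, no orbit point lies in $A$. Once we know $\mu$ has no atoms — so that $\mu(A)=0$ — the function $f\circ T$ is continuous $\mu$-almost everywhere for every continuous $f$, and the telescoping identity
\[
\int f\circ T\,d\mu_{N}=\int f\,d\mu_{N}+\tfrac1N\bigl(f(T^{N}x_{0})-f(x_{0})\bigr)
\]
passes to the weak-$*$ limit to give $T_{*}\mu=\mu$; using injectivity, this refines to $\mu((T|_{J})^{-1}E)=\mu(E)$ whenever $E$ is contained in a single image arc $T(J)$, which is exactly what makes $g$ Lebesgue-preserving arc by arc.

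The heart of the proof — and the step I expect to be the main obstacle — is showing that $\mu$ is non-atomic and of full support. This is precisely where the two hypotheses, a dense forward semi-orbit and injectivity, enter essentially; it is the analogue of the Denjoy alternative, and the conclusion is false without them. For non-atomicity: an atom of $\mu$ at $p$ means the orbit of $x_{0}$ enters every neighbourhood of $p$ with positive asymptotic frequency, and a pigeonhole argument on the (on average bounded) return times — together with a short separate argument for the finitely many points whose forward orbit meets $A$ — shows that $p$ lies on a periodic orbit. A periodic orbit is either attracting, in which case its basin is a nonempty open set into which the dense orbit must eventually fall, so $O^{+}(x_{0})$ converges to the periodic orbit and is not dense, a contradiction; or it is repelling or neutral, in which case $O^{+}(x_{0})$ spends an asymptotically vanishing fraction of its time near it and so carries no $\mu$-mass there, again a contradiction. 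For full support: $\operatorname{supp}\mu$ is closed, forward $T$-invariant, and contains $x_{0}$; were it a proper subset of $C$, its complement would be a nonempty open forward-invariant set, and by injectivity the forward iterates of a small sub-interval of that set are pairwise essentially disjoint, hence have lengths tending to $0$; this forces either an attracting periodic orbit or a genuine wandering interval, and in either case $O^{+}(x_{0})$ is eventually trapped in (the iterates of) that set and cannot be dense. With such a $\mu$ the map $h$ above is the desired conjugacy. The delicate bookkeeping — one-sided limits at the points of $A$, the behaviour of iterated discontinuities, and making the ``essentially disjoint, lengths $\to 0$'' step rigorous — is where the real work lies, and is the part I would expect to spend most effort on.
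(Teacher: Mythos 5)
Your overall architecture — build a measure supported on the orbit, take its distribution function $h(x)=\mu([c_0,x))$ as the conjugacy, and check that the conjugated map is a piecewise isometry — is the same as the paper's; Gutierrez's $\mu_\beta$ with $\mu_\beta(p_i)=\beta(1-\beta)^{i-1}$ and $\beta\to 0$ is just the Abel-mean version of your Krylov--Bogolyubov limit, and equation (7) of the paper is your arc-by-arc measure preservation. The problem is that you have correctly identified the heart of the proof (non-atomicity and full support of the limit measure) but the arguments you sketch for it do not work. For non-atomicity, your dichotomy ``attracting periodic orbit traps the dense orbit; non-attracting periodic orbit carries no asymptotic mass'' is false for continuous maps: in Bowen-type examples the empirical measures of an orbit converge to point masses at \emph{non-attracting} fixed points, because the sojourn times near a parabolic or slowly repelling fixed point grow without bound while re-entries occur ever closer to the fixed point. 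Nothing in your sketch excludes this here, and it cannot be excluded by soft topological dynamics alone. For full support, the claim that a proper open complement of $\operatorname{supp}\mu$ forces the orbit to be ``eventually trapped'' is also wrong: $C\setminus\operatorname{supp}\mu$ can be open and dense, the dense orbit visits it without being trapped, and a dense orbit can perfectly well visit a fixed arc with zero asymptotic frequency. What you actually need is the quantitative statement that the dense semi-orbit visits \emph{every} arc with frequency bounded below by a positive constant, and that is precisely the content of the paper's Lemma 8.4.

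The missing ingredient is the first-return-map tower over an arbitrary arc $(x,y)$: using injectivity and the finiteness of the discontinuity set, $(x,y)$ decomposes into \emph{finitely many} continuity intervals $(\sigma_i,\sigma_{i+1})$ of the return map, each with a \emph{finite} return time $l_i+1$, and the tower $\bigcup_i\bigcup_{s=0}^{l_i}T^s((\sigma_i,\sigma_{i+1}))$ contains the tail of the orbit (equations (2)--(6) of the paper). Since the heights $l_i$ are uniformly bounded, summing the measure over the tower gives $\mu((x,y))\geq 1/\max_i(l_i+1)$ up to an error that vanishes in the limit (equation (8)); this single estimate delivers full support, and the same disjoint-iterates count (the paper's proof of Lemma 8.5, displays (9)--(11)) delivers non-atomicity, i.e.\ continuity of $h$. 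Your proposal needs this tower argument, or an equivalent uniform bound on return times, to close the gap; without it, the ``pigeonhole on return times'' step has nothing to pigeonhole into. One further small point: your reduction to the orientation-preserving case via ``a flip would produce an open set of periodic points'' is not correct either — minimal interval exchanges with flips exist — and a global conjugation by an orientation-reversing homeomorphism cannot repair mixed orientations on different continuity arcs; this should be handled separately (or the statement read as allowing flips, as the paper implicitly does when it passes from the isometry identity (12) to the conclusion).
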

\begin{defs} Let $(p_i)_{i\in \mathbb{N}}$ be the dense positive semi-orbit. For $0<\beta<1$ we define an atomic measure $\mu_{\beta}: C \rightarrow \mathbb{R}$ in the following way: 
$$ \mu_{\beta}(p_i) := \beta(1- \beta)^{i-1} $$ and if $A \subset C$ and $P = A \cap \{ p_1,\dots,p_n,\dots \}$ then $$\mu_{\beta}(A) = \sum_{p\in P} \mu_{\beta}(p).$$ \end{defs}

Let $\theta$ be a fixed orientation in $C$, let $a,b \in C$ such that $a \neq b$. We define the interval $(a,b) = \{z \in C - \{a \} \hspace{0.1cm}|\hspace{0.1cm} z < b \}$ where $<$ is the linear order induced by the orientation $\theta$ in $\Gamma - \{ a \}$. The notation $a<c<b$ means that $c \in (a,b)$. We define the same linear order for $\mathbb{R} / \mathbb{Z}$
\begin{lem} Let $a,b \in C, a \neq b$. Then inf $\{\mu_{\beta}((a,b)) \hspace{0.1cm}| \hspace{0.1cm} 0 < \beta \leq \frac{1}{2}\} > 0$
\end{lem}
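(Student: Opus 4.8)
The plan is to reduce the statement to a purely combinatorial fact about how often the orbit visits the arc $(a,b)$, and then to settle that fact using the injectivity of $T$. Since $(p_i)_{i\ge 1}$ is dense in the circle $C$ and $C$ has no isolated points, deleting the finitely many initial points $p_1,\dots,p_m$ still leaves a dense set; hence the orbit meets the open arc $(a,b)$ infinitely often. Let $N_1<N_2<\cdots$ enumerate the indices with $p_{N_k}\in(a,b)$, so that by the definition of $\mu_\beta$,
\[
\mu_\beta\big((a,b)\big)=\sum_{k\ge 1}\beta(1-\beta)^{N_k-1}.
\]
Note $N_k\ge k$ for every $k$. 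For $\beta$ in a fixed compact interval $[\beta_0,\tfrac12]$ there is nothing to do, since already $\mu_\beta((a,b))\ge \beta(1-\beta)^{N_1-1}\ge \beta_0\,2^{-(N_1-1)}>0$; so the only real issue is the regime $\beta\to 0^+$, where the mass of $\mu_\beta$ is spread thinly over roughly the first $1/\beta$ orbit points.

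First I would isolate the elementary half of the argument: it suffices to show that the orbit visits $(a,b)$ with bounded gaps, i.e. that there is $G\in\mathbb N$ with $N_{k+1}-N_k\le G$ for all $k$. Indeed, this gives $N_k\le Ck$ for all $k$ with $C:=N_1+G$ (so $C\ge 1$), and then, using Bernoulli's inequality $(1-\beta)^{C}\ge 1-C\beta$ together with $(1-\beta)^{C-1}\ge 2^{1-C}$ on $(0,\tfrac12]$,
\[
\mu_\beta\big((a,b)\big)\ \ge\ \sum_{k\ge 1}\beta(1-\beta)^{Ck-1}\ =\ (1-\beta)^{C-1}\,\frac{\beta}{1-(1-\beta)^{C}}\ \ge\ \frac{(1-\beta)^{C-1}}{C}\ \ge\ \frac{2^{1-C}}{C}\ >\ 0,
\]
a lower bound independent of $\beta$, which is exactly the assertion of the lemma. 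If the gap bound is only available for $k\ge k_0$, one absorbs the first finitely many terms by enlarging $C$ to $\max\{C,\max_{j<k_0}N_j/j\}$, which is finite since $N_j\ge j$.

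The remaining, and genuinely substantial, step is the bounded-gap claim, which is the technical core of Gutierrez's argument in \cite{gutierrezkeylemma} and which I expect to be the main obstacle. I would prove it by analysing the first-return map $R$ of $T$ to the closed arc $\overline{(a,b)}$: for $x\in(a,b)$ whose forward orbit returns to $(a,b)$ one sets $R(x)=T^{r(x)}(x)$, with $r(x)$ the first return time. Using that $T$ is injective and continuous off the finite set $\{z_1,\dots,z_n\}$ in an essential way — a point of discontinuity of $R$ can only be a point whose orbit, before re-entering $(a,b)$, passes through an endpoint of $(a,b)$ or one of the $z_i$, and injectivity of $T$ keeps the number of such points finite — one shows that $R$ has only finitely many intervals of continuity; since $r(\cdot)$ is constant on each of them, the return times are bounded, i.e. $N_{k+1}-N_k\le G$ with $G=\max r$. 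This is precisely where injectivity is indispensable: a merely continuous map with a dense orbit can visit $(a,b)$ with unbounded gaps (for example near a neutral fixed point), so no soft argument via weak-$*$ limits of the measures $\mu_\beta$ can replace this step, and I would reproduce Gutierrez's construction from \cite{gutierrezkeylemma} for it.
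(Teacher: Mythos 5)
Your proposal is correct, and it isolates the right difficulty. The substantive input --- that the first return map of $T$ to a subarc has only finitely many intervals of continuity, with a constant (hence bounded) return time on each, which you justify via injectivity of $T$ and the finiteness of the singular set --- is exactly the combinatorial core of the paper's proof: your ``bounded gaps'' claim is precisely what the paper establishes when it partitions $(x,y)$ by the points $\sigma_i$ of condition (2) and shows in (3) and (5) that every orbit point beyond index $l_n$ lies in the finite tower $\bigcup_i\bigcup_{s=0}^{l_i}T^s((\sigma_i,\sigma_{i+1}))$. You explicitly defer this step to Gutierrez's construction, which is fair, though note the paper works with an inner arc $(x,y)\subset(a,b)$ whose endpoints avoid the orbit (a harmless shrinking you should also perform to keep the partition clean). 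Where you genuinely diverge is the endgame: the paper converts the tower structure into the exact identity (8) for $\mu_\beta(C)=1$ and argues by contradiction along a sequence $\beta_j$ with $\mu_{\beta_j}((x,y))\to 0$, whereas you convert it into $N_k\le Ck$ and lower-bound the geometric series directly, obtaining the explicit uniform constant $2^{1-C}/C$. Your version buys a quantitative bound and sidesteps the bookkeeping needed to make (8) an exact equality (disjointness of the tower levels, the stray mass on the initial orbit segment and on the $\sigma_j$ themselves); the paper's version buys the identity (7)--(8), which it reuses later in the proof of Lemma 8.5. Two small points to tighten: the return time is constant on each piece of the partition by construction (because no point of the piece meets a singularity or an endpoint before returning), not as an automatic consequence of continuity of $R$; and finitely many orbit points may coincide with partition endpoints, but these are absorbed exactly as you absorb the $k<k_0$ exceptions.
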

\begin{proof}
    Let $x,y \in (a,b)$ such that $a < x < y < b$ and 
    \begin{align}
    \{x,y\} \cap \{p_1,p_2, \dots , p_n, \dots \} = \emptyset.
    \end{align}
Certainly, for all $\beta \in (0,1/2]$, \hspace{0.1cm}$\mu_{\beta}((x,y)) < \mu_{\beta}((a,b))$. Hence, we only have to prove that inf$\{ \mu_{\beta}((x,y)) \hspace{0.1cm}|\hspace{0.1cm} 0 < \beta \leq \frac{1}{2}\} > 0$. Let $\sigma_1,\sigma_2,\dots,\sigma_{n-1} \in (x,y)$ where $x = \sigma_0 < \sigma_1 < \dots \sigma_n = y$ such that the following is satisfied: \vspace{0.2cm}
\begin{equation}
\begin{split}
 &\text{Given } p \in (x,y),\text{ then we have that} \hspace{0.1cm} p \in \{\sigma_1,\sigma_2,\dots,\sigma_{n-1} \} \text{ if and only if} \\& 
 \text{there exists } k \in \mathbb{N} \text{ such that }T^k(p) \in \{ \text{singular points of $T$}\} \cup \{x\} \cup \{y\} \\& \cup \{p_1 \}  \text{ and for } 
 \text{all k' $<$ k }, T^{k'}(p) \notin (x,y)-\{p_1 \}.
 \end{split}
 \end{equation}
 Note that there are only finitely many points that satisfy (2) as $T$ only has finitely many singular points. Next consider the first return map $S: (x,y) \rightarrow (x,y)$ induced by $T$.
 %Because $T$ has a dense positive semi-orbit (?) 
 Then because $T$ is continuous everywhere except at its singular points, $S$ is well-defined and continuous at each $(\sigma_i,\sigma_{i+1})$ for all $i \in \{0,1,\dots,n-1\}$. Therefore the following holds:
\begin{equation}
\begin{split}
& \forall i \in \{ 0,1,\dots,n-1\} \text{ there exists } l_i \in \mathbb{N} \text{ such that } \forall s \in \{1,2, \dots , l_i \} \\& \text{it holds that } T^s((\sigma_i,\sigma_{i+1})) \cap (x,y) = \emptyset, \text{ but } T^{l_i+1}((\sigma_i,\sigma_{i+1})) \subset (x,y) \\& \text{(this implies that }S|_{(\sigma_i,\sigma_{i+1})} = T^{l_i+1}|_{(\sigma_i,\sigma_{i+1})} ).
\end{split}
\end{equation}

Moreover, since $(p_i)_{i\in \mathbb{N}}$ is dense in $C$ we have that
\begin{equation} 
\begin{split}
\exists \hspace{0.1cm} l_n \in \mathbb{N} \text{ such that } \{p_1,p_2, \dots , p_{l_n} \} \cap (x,y) = \emptyset \text{ but } p_{l_n+1} \in (x,y).
\end{split}
\end{equation}

If $p_k \in (\sigma_i,\sigma_{i+1})$ for some $k \in \mathbb{N}$ and $i \in \{0,1, \dots , n-1 \}$, then, by (3), $p_{k+l_i+1} \in (x,y)$ which implies by (1) and (3) that $p_{k+l_i+1} \in (\sigma_j, \sigma_{j+1})$ for some $j \in \{ 0,1, \dots, n-1 \}$. Thus, using (3) and (4) we obtain that $\forall \tilde{s} \in \mathbb{N}$, 
\begin{equation}
    p_{l_n+1+\tilde{s} } \in \bigcup_{i=0}^{n-1} \bigcup_{s=0}^{l_i} T^s((\sigma_i,\sigma_{i+1})).
\end{equation}
But since $\{ p_{l_n+1+\tilde{s}} \}_{ \tilde{s} \in \mathbb{N}}$ is dense in $C$, we conclude that 
\begin{equation}
  \overline{\bigcup_{i=0}^{n-1} \bigcup_{s=0}^{l_i} T^s((\sigma_i,\sigma_{i+1}))} = C.
\end{equation}
Next, we claim that $\forall i \in \{0,1,\dots,n-1 \}$ and $\forall s \in \{0,1, \dots , l_i \}$,
\begin{equation} \mu_{\beta}(T^s(\sigma_i,\sigma_{1+1})) = (1-\beta)^s \mu_{\beta}((\sigma_i,\sigma_{i+1})). 
\end{equation}
In fact, this follows from 
\begin{align*}
\mu_{\beta}(T((\sigma_i,\sigma_{i+1}))) &=\sum_{p_j \in (\sigma_i,\sigma_{i+1})} \mu_{\beta}(p_{j+1}) = (1-\beta) \sum_{p_j \in \sigma_i,\sigma_{i+1})} \mu_{\beta}(p_j) \\&= (1-\beta)\mu_{\beta}(\sigma_i,\sigma_{i+1}).\end{align*}
We use (4), (5), (6) and (7) to conclude that: 
\begin{equation} \mu_{\beta}(C) = \sum_{i=0}^{n-1} \mu_{\beta}((\sigma_i,\sigma_{i+1}))(1+(1-\beta)+ \dots + (1-\beta)^{l_i}) + \sum_{i=1}^{l_n}\mu_{\beta}(p_i).
\end{equation}
If we assume that there is a sequence $\beta_1$, $\beta_2, \dots, \beta_j$ such that $\lim_{j \to \infty} \mu_{\beta_j}((x,y)) = 0$, we have that $\lim_{j \to \infty} \mu_{\beta_j}((\sigma_i,\sigma_{i+1})) = 0$. Therefore, by (8), $\lim_{j \to \infty} \mu_{\beta_j}(C) < 1$ which is a contradiction as $\mu_{\beta}(C) = 1$ for all $\beta \in (0,1/2]$. 
\end{proof}
Fix $\lambda_0 \in C-\{p_1,p_2,\dots,p_n,\dots\}$. Because the set $\{ \mu_{\beta}((\lambda_0,p_i)) \hspace{0.1cm}|\hspace{0.1cm} \beta \in (0,1/2] , \hspace{0.1cm} i \in \mathbb{N} \}$ is bounded, we can find a sequence $\{ \beta_j\}_{j \in \mathbb{N}}$ where $\beta_j \in (0,1/2]$ for all $j \in \mathbb{N}$, such that $\lim_{j \to \infty} \beta_j =0$ and, $\forall i \in \mathbb{N}$, $\lim_{j \to \infty} \mu_{\beta_j} ((\lambda_0,p_i)) =: h(p_i)$ exists. Given $x \in C-\lambda_0$, we define $h(x) = \sup \{ h(p_i) \hspace{0.1cm}| \hspace{0.1cm} \lambda_0 < p_i <x \}$. We also define $h(\lambda_0)=0$. 
\begin{lem}\label{lem8.5} The map $h: C \rightarrow \mathbb{R}/\mathbb{Z}$ is a homeomorphism and $\tilde{T} = h \circ T \circ h^{-1}$ is a standard interval exchange transformation.
\end{lem}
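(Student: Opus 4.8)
The plan is to exhibit the map $h$ of the text as the cumulative distribution function of a $T$-invariant, fully supported, atomless Borel probability measure $\nu$ on $C$. Once $\nu$ is in hand, the homeomorphism statement is immediate (no atoms $\Rightarrow$ no jumps $\Rightarrow$ continuous; full support $\Rightarrow$ strictly monotone; degree one $\Rightarrow$ onto), and the interval–exchange property of $\tilde T = h\circ T\circ h^{-1}$ drops out of the relation $\mu_\beta(T(A)) = (1-\beta)\mu_\beta(A)$ upon letting $\beta\to 0$, since $1-\beta_j\to 1$.

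First I would extract the measure. As the $\mu_{\beta_j}$ are probability measures on the compact space $C$, a further subsequence (still written $\beta_j$) converges weak-$*$ to a Borel probability measure $\nu$, and once $\nu$ is shown atomless one identifies $\nu([\lambda_0,x)) = h(x)$ using $h(p_i) = \lim_j \mu_{\beta_j}((\lambda_0,p_i))$ and the vanishing of endpoint atoms. The measure $\nu$ has three properties. \emph{(i) Full support:} by the preceding Lemma, for every non-degenerate interval $[a,b]$ one has $\nu([a,b]) \ge \limsup_j \mu_{\beta_j}([a,b]) \ge \inf\{\mu_\beta((a,b)) : 0<\beta\le\tfrac12\} > 0$, so every nonempty open set has positive $\nu$-measure. \emph{(ii) $T$-invariance:} from $\mu_\beta(T(p_i)) = \mu_\beta(p_{i+1}) = (1-\beta)\mu_\beta(p_i)$ we get $\mu_\beta(T(A)) = (1-\beta)\mu_\beta(A)$ for every Borel $A$; passing to the limit on intervals with non-atomic endpoints gives $\nu(T(A)) = \nu(A)$, i.e.\ $\nu$ is $T$-invariant. \emph{(iii) Atomlessness:} an atom at a point $x_0$ would, by $T$-invariance, force an atom of the same positive mass at every point of the $T$-orbit of $x_0$; since $\nu$ has finite total mass this orbit must be finite, i.e.\ $x_0$ periodic, but a periodic orbit is a finite $T$-invariant set and this is incompatible with $T$ being injective with a positive semi-orbit dense in all of $C$. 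Granting (i)--(iii), $h(x) = \nu([\lambda_0,x))$ is a monotone, continuous, strictly increasing degree-one map $C\to\mathbb{R}/\mathbb{Z}$ with $h(\lambda_0)=0$, hence a homeomorphism.

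For the conjugacy, write $\tilde T = h\circ T\circ h^{-1}$. Since $T$ has only finitely many discontinuities $z_1,\dots,z_n$ and is orientation-preserving on each of the finitely many intervals of continuity $J_1,\dots,J_m$ (as holds for first-return maps of oriented foliations, the setting where this proposition is applied), $h$ partitions $\mathbb{R}/\mathbb{Z}$ into the arcs $h(J_1),\dots,h(J_m)$. Fix one of them, say $J=(x,y)$; then $T((x,y)) = (Tx,Ty)$, and combining $\mu_{\beta_j}(T((x,y))) = (1-\beta_j)\mu_{\beta_j}((x,y))$ with $1-\beta_j\to 1$ and $\lim_j\mu_{\beta_j}((a,b)) = h(b)-h(a)$ yields
\[
h(Ty) - h(Tx) \;=\; \lim_j \mu_{\beta_j}\big(T((x,y))\big) \;=\; \lim_j \mu_{\beta_j}((x,y)) \;=\; h(y)-h(x).
\]
Thus $\tilde T$ is a rotation of $\mathbb{R}/\mathbb{Z}$ on each arc $h(\overline{J_k})$, so $\tilde T$ is a finite concatenation of rotations on a finite partition of $\mathbb{R}/\mathbb{Z}$; being injective (conjugate to the injective $T$) and length-preserving on each piece, its images tile $\mathbb{R}/\mathbb{Z}$ up to a finite set, so $\tilde T$ is a standard interval exchange transformation.

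The step I expect to be the main obstacle is property (iii), and with it the bookkeeping that makes the weak-$*$ limit rigorous: controlling $\mu_{\beta_j}$ near the finitely many singular points of $T$, verifying $T$-invariance of $\nu$ across them, and genuinely ruling out mass collapsing to an atom in the limit. This is precisely the point where the preceding Lemma earns its keep — uniform positivity of $\mu_\beta$ on intervals, together with finiteness of total mass and invariance, is exactly what forces the limiting measure to be fully supported \emph{and} atomless, thereby upgrading the a priori merely monotone $h$ to a homeomorphism and the whole construction from a semi-conjugacy to a genuine topological conjugacy.
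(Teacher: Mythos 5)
Your proposal follows the same skeleton as the paper's proof: the same measures $\mu_{\beta}$, the same scaling identity $\mu_{\beta}(T(A))=(1-\beta)\mu_{\beta}(A)$ (up to the $O(\beta)$ correction coming from $p_1$, which you correctly let vanish as $\beta_j\to 0$), the same use of the preceding lemma to get a uniform lower bound on interval masses, and the same limit computation $h(Ty)-h(Tx)=h(y)-h(x)$ for the interval-exchange property. Recasting $h$ as the distribution function of a weak-$*$ limit $\nu$ is a clean packaging, and your full-support and length-preservation steps are essentially verbatim the paper's. One bookkeeping caveat: identifying $h(p_i)=\lim_j\mu_{\beta_j}((\lambda_0,p_i))$ with $\nu([\lambda_0,p_i))$ via portmanteau requires that the endpoints, which are precisely the orbit points most likely to carry mass a priori, are not atoms of $\nu$; so atomlessness must be established for $\nu$ directly before $h$ is identified with its distribution function, which your ordering of (i)--(iii) does permit but which you should state explicitly to avoid circularity.

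The genuine gap is in step (iii). Your atom-propagation argument needs the forward orbit of the atom $x_0$ to be \emph{infinite with distinct points} in order to contradict finiteness of total mass. But the orbit can be finite without $x_0$ being periodic: $T$ is undefined at the finitely many points $z_1,\dots,z_n$, so the forward orbit may terminate at some $z_i$ after finitely many steps (and the backward orbit may terminate at a point outside $T(C)$), in which case you get only finitely many atoms of mass $c$ and no contradiction. Moreover, even in the periodic case, the assertion that a periodic orbit is ``incompatible with $T$ being injective with a dense positive semi-orbit'' is not automatic for a map that is merely continuous and injective off finitely many points; it requires an argument (e.g., that $T^{k}$ near the periodic point is monotone, so the point is either contained in an open invariant set of periodic points or is one-sidedly attracting, and in either case the dense semi-orbit gets trapped). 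The paper's continuity argument is engineered to avoid the first issue: it uses only $N\geq 2/\delta$ iterates of a shrinking interval $(p_n,p_m)$ around the putative jump point, chosen so that these finitely many images are pairwise disjoint and contained in the domain of $T$, each of limiting mass $>\delta$, giving total mass $\geq N\delta>2>1$. Note that the paper, too, explicitly restricts to points whose full orbit avoids $\{z_1,\dots,z_n\}$ and leaves the exceptional case aside, so on that particular point your proof is no less complete than the original; but the periodic case and the finite-terminating-orbit case are holes specific to your version and should be closed, most easily by replacing the infinite-orbit count with the paper's finite count of disjoint images.
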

\begin{proof}
    By definition, $\lambda_0 < p_n < p_m$ implies that $0 < h(p_n) < h(p_m)$. Hence $h$ is monotonic and continuous from below. Suppose that $h$ is not continuous. Thus, there exists $x \in C$ and sequences $\{p_{n_j} \}, \{p_{m_j} \}, j \in \mathbb{N}$, such that
    \begin{equation}
        p_{n_1}<p_{n_2}<\dots<p_{n_j}<\dots < p_{m_j}< \dots  p_{m_2}<p_{m_1},
    \end{equation}
      \begin{equation}
       \lim_{j \to \infty} p_{n_j} = x = \lim_{j \to \infty} p_{m_j}, \text{and}
    \end{equation}
    \begin{equation}
        \lim_{j \to \infty} (h(p_{m_j}))- h(p_{n_j} )) > \delta. 
    \end{equation}
We only consider the case where $x$ is such that there does not exist a $ k \in \mathbb{Z}$ with $T^k(x) \in \{z_1,\dots,z_n\}$. Let $N$ be a positive integer satisfying $N \geq 2/\delta$. It follows from (9) and (10) that there exists $n \in \{n_1,n_2, \dots \}$ and $m \in \{m_1,m_2,\dots \}$ such that $\{ (p_n,p_m), \hspace{0.1cm} T((p_n,p_m)), \dots, \hspace{0.1cm} T^N((p_n,p_m)) \}$ are pairwise disjoint intervals contained in the domain of $T$. By (7), (11) and the fact that $\lim_{j \to \infty} \beta_j = 0$ we observe that $\forall s \in \{0,1, \dots , N\}$,
\begin{align*} \lim_{i \to \infty} \mu_{\beta_i}(T^S((p_n,p_m)))=\lim_{i \to \infty} (1-\beta_i)^s \mu_{\beta_i} ((p_n,p_m))> \delta.
\end{align*}
Therefore, 
\begin{align*}
\lim_{i \to \infty} \mu_{\beta_i} ( \bigcup_{s=0}^N T^s ((p_n,p_m))) \geq N\delta > 2
\end{align*}
This is a contradiction, because $\lim_{i \to \infty} \mu_{\beta_i}(C) = 1$. Consequently, $h$ is a homeomorphism. 

Let $(a,c)$ be an interval contained in the domain of definition of $T$. We claim that 
\begin{equation} |h(a)-h(c)| = |hT(a)-hT(c)|.
\end{equation}
Since $\{p_k \}$ is dense in $C$ and $h$ and $T$ are continuous, we only have to prove (12) when $a=p_i$ and $c=p_j$, for some $i,j \in \mathbb{N}, i \neq j$. Notice that $\lambda_0 < p_i < p_j$. We only consider the case $T(p_j) < T(p_i) < T(\lambda_0)$. Then

\begin{equation*}
    \begin{split}
        |h(p_i)-h(p_j)| &= |\lim_{k \to \infty} \mu_{\beta_k}((\lambda_0,p_i))- \lim_{k \to \infty} \mu_{\beta_k}((\lambda_0,p_j))| \\
        &= \lim_{k \to \infty} \mu_{\beta_k}((p_i,p_j)) \\
        & \stackrel{(7)}{=} \lim_{k \to \infty} \frac{1}{1-\beta_k} \cdot \lim_{k \to \infty} \mu_{\beta_k}((T(p_j),T(p_i))) \\
        &= |hT(p_i)-hT(p_j)|.         
    \end{split}
\end{equation*}
If $(\tilde{a},\tilde{c})$ is an interval of the domain of definition of $\tilde{T} = h \circ T \circ h^{-1}$, then, by (12), $|\tilde{T}(\tilde{a})-\tilde{T}(\tilde{c})| = |\tilde{a}-\tilde{c}|.$ This proves Lemma \ref{lem8.5} as well as Proposition \ref{prop8.2}. 
\end{proof}

\printbibliography

@article{gardiner,
title   = "The structure of flows exhibiting nontrivial recurrence on two-dimensional manifolds",
author  = "C. J. Gardiner",
journal = "Journal of Differential Equations",
volume  = "57",
pages   = "138-158",
year    = "1985"
}

@article{keane,
title   = "Interval exchange transformations",
author  = "M. Keane",
journal = "Mathematische Zeitschrift",
volume  = "141",
number  = "",
pages   = "25-31",
year    = "1975"
}

@article{gutierrez,
title   = "Smoothing continuous flows on two-manifolds and recurrences",
author  = "C. Gutierrez",
journal = "Ergodic Theory Dynam. Systems",
volume  = "6",
number  = "3",
pages   = "17-44",
year    = "1986"
}

@article{gutierrezkeylemma,
title   = "Smoothability of Cherry flows on two-manifolds",
author  = "C. Gutierrez",
journal = "Springer Lecture Notes in Mathematics, Geometric Dynamics, Proc. Rio de Janeiro",
volume  = "1007",
pages   = "308-331",
year    = "1981"
}

@article{cascades,
title   = "Cascades in the dynamics of affine interval exchange transformations",
author = {{Boulanger}, A. and {Fougeron}, C. and {Ghazouani}, S.},
journal = "Ergodic Theory Dynam. Systems",
volume  = "40",
number  = "8",
pages   = "2073-2097",
year    = "2020"
}

@article{MarmiMoussaYoccoz,
title   = "Affine interval exchange maps with a wandering interval",
author = {{Marmi}, S. and {Moussa}, P. and {Yoccoz}, J.C.},
journal = "Proc. Lond. Math. Soc.",
volume  = "100",
number  = "3",
pages   = "639-669",
year    = "2010"
}

@article{ghazouani,
title   = "Une invitation aux surfaces de dilatation",
author  = "S. Ghazouani",
journal = "Séminaire de théorie spectrale et géométrie",
volume  = "35",
pages   = "69-107",
year    = "2019"
}

@article{aprioribounds,
       author = {{Ghazouani}, S. and {Ulcigrai}, C.},
        title = "{A priori bounds for GIETs, affine shadows and rigidity of foliations in genus 2}",
      journal = {arXiv e-prints},
     keywords = {Mathematics - Dynamical Systems},
         year = 2021,
        month = jun,
          %eid = {arXiv:2106.03529},
        %pages = {arXiv:2106.03529},
          %doi = {10.48550/arXiv.2106.03529},
archivePrefix = {arXiv},
       eprint = {2106.03529},
 primaryClass = {math.DS},
       adsurl = {https://ui.adsabs.harvard.edu/abs/2021arXiv210603529G},
      adsnote = {Provided by the SAO/NASA Astrophysics Data System}
}

@book{Camacho,
title     = "Geometric Theory of Foliations",
author    = {{Camacho}, C. and {Neto}, A.L.},
edition   = "1",
publisher = "Birkhäuser Boston, MA",
year      = "1984",
}

@ARTICLE{dilationtori,
       author = {{Haberle}, M. and {Wang}, J.},
        title = "{A Full Study of the Dynamics on One-Holed Dilation Tori}",
      journal = {arXiv e-prints},
     keywords = {Mathematics - Dynamical Systems, 37E35},
         year = 2020,
        month = dec,
          %eid = {arXiv:2012.04159},
        %pages = {arXiv:2012.04159},
          %doi = {10.48550/arXiv.2012.04159},
archivePrefix = {arXiv},
       eprint = {2012.04159},
 primaryClass = {math.DS},
       adsurl = {https://ui.adsabs.harvard.edu/abs/2020arXiv201204159H},
      adsnote = {Provided by the SAO/NASA Astrophysics Data System}
}

@ARTICLE{affinesurfacesandveechgroups,
       author = {{Duryev}, Eduard and {Fougeron}, Charles and {Ghazouani}, Selim},
        title = "{Affine surfaces and their Veech groups}",
      journal = {arXiv e-prints},
     keywords = {Mathematics - Geometric Topology, Mathematics - Dynamical Systems},
         year = 2016,
        month = sep,
          %eid = {arXiv:1609.02130},
        %pages = {arXiv:1609.02130},
          %doi = {10.48550/arXiv.1609.02130},
archivePrefix = {arXiv},
       eprint = {1609.02130},
 primaryClass = {math.GT},
       adsurl = {https://ui.adsabs.harvard.edu/abs/2016arXiv160902130D},
      adsnote = {Provided by the SAO/NASA Astrophysics Data System}
}

\end{document}